\newtheorem{theorem}{Theorem}[section]
\newtheorem{lemma}{Lemma}[section]
\newtheorem{remark}{Remark}[section]
\newtheorem{corollary}{Corollary}[section]
\numberwithin{equation}{section} 
\begin{document}

\begin{frontmatter}
\title{SPICE: Scaling-Aware Prediction Correction Methods with a Free Convergence Rate for Nonlinear Convex Optimization}

\author{ Sai Wang$^{1,2}$} 
\address{$^1$ XXX University, Country\\
         $^2$Southern University of Science and Technology, China }

\begin{abstract}
Recently, the prediction-correction method has been developed to solve nonlinear convex optimization problems. However, its convergence rate is often poor since large regularization parameters are set to ensure convergence conditions. In this paper, the scaling-aware prediction correction (\textsf{Spice}) method is proposed to achieve a free convergence rate. This method adopts a novel scaling technique that adjusts the weight of the objective and constraint functions. The theoretical analysis demonstrates that increasing the scaling factor for the objective function or decreasing the scaling factor for constraint functions significantly enhances the convergence rate of the prediction correction method. In addition, the \textsf{Spice} method is further extended to solve separable variable nonlinear convex optimization. By employing different scaling factors as functions of the iterations, the \textsf{Spice} method achieves convergence rates of $\mathcal{O}(1/(t+1))$, $\mathcal{O}(1/[e^{t}(t+1)])$, and $\mathcal{O}(1/(t+1)^{t+1})$. Numerical experiments further validate the theoretical findings, demonstrating the effectiveness of the \textsf{Spice} method in practice.
\end{abstract}

\begin{keyword}
Primal-dual method, prediction correction method, scaling technique.
\end{keyword}
\end{frontmatter}

\section{Introduction}
Nonlinear convex optimization is fundamental to many fields, including control theory, image denoising, and signal processing. This work addresses a general nonlinear convex problem with inequality constraints, formalized as:
\begin{equation}
\begin{aligned}
\mathsf{P}0:\quad \min\left\{ f(\mathbf{x}) \mid   \phi_{i}(\mathbf{x}) \leq 0, \ \mathbf{x}\in \mathcal{X},\ i=1,\cdots,p\right\},
\end{aligned}\label{sec1-eq1}
\end{equation}
where the set $\mathcal{X}$ is a nonempty closed convex subset of $\mathbb{R}^{n}$. The objective and constraint function $\{f(\mathbf{x}): \mathbb{R}^{n} \rightarrow \mathbb{R}\}$ and $\{\phi_{i}(\mathbf{x}): \mathbb{R}^{n} \rightarrow \mathbb{R}, i = 1, \cdots, p\}$ are convex, with the constraint functions also being continuously differentiable.  Notably, \( f(\mathbf{x}) \) is not assumed to be differentiable. The Lagrangian function associated with \textsf{P}0, parameterized by the dual variable \( \boldsymbol{\lambda} \), is given by:
\begin{equation}
\mathcal{L}(\mathbf{x}, \boldsymbol{\lambda}) = f(\mathbf{x}) + \boldsymbol{\lambda}^{\top} \Phi(\mathbf{x}),\ \label{sec1-eq2}
\end{equation}
where $\Phi(\mathbf{x}) = [\phi_{1}(\mathbf{x}), \cdots, \phi_{p}(\mathbf{x})]^{\top}$, and the dual variable $\boldsymbol{\lambda}$ belongs to the set $\mathcal{Z} := \mathbb{R}^{p}_{+}$. In this way, the constrained problem is reformulated to a saddle point problem. The saddle point of \eqref{sec1-eq2} yields the optimal solution of \textsf{P}0. A common approach is to design an iterative scheme to update the primal and dual variables alternately. The Arrow-Hurwicz method, one of the earliest primal-dual methods, was proposed to solve concave-convex optimization problems \cite{R0}, laying the foundation for subsequent developments. Sequentially, a primal-dual hybrid gradient (PDHG) method \cite{R3} introduced gradient steps, offering a more flexible framework. However, PDHG has been shown to diverge in certain linear programming problems \cite{R1}, and its variants achieve convergence only under more restrictive assumptions \cite{R1o2}. To address this issue, the Chambolle-Pock method was developed as a modified version of PDHG. By introducing a relaxation parameter, it improved stability and enhanced the convergence rate for saddle point problems with linear operators \cite{R6,R7,R8}. Further extensions of PDHG have been applied to nonlinear operators \cite{R4,R5}. Unfortunately, these approaches offer only local convergence guarantees, leaving the convergence rate unaddressed. Another notable class of primal-dual methods is the customized proximal point algorithms (PPA) developed in \cite{R9,R10,R11,R12,R13}. These algorithms aim to customize a symmetric proximal matrix, rather than merely relaxing the primal variable, to address convex problems with linear constraints. Based on variational analysis theory, they provide a straightforward proof of convergence. Later, a novel prediction-correction (PC) method was proposed in \cite{PC1,PC2}. The key innovation of the PC method is proposed to correct the primal and dual variables generated by the Arrow-Hurwicz method without constructing a symmetric proximal matrix. This unified framework establishes convergence conditions for the corrective matrix, which are relatively easy to satisfy. In Table \ref{table0}, a comparison of different primal-dual methods under setting assumptions and convergence rate.

Recent advances have extended prediction-correction (PC) methods to handle nonlinear optimization problems, particularly by leveraging carefully designed regularization parameters \cite{our1, our2}. These works prove that it can achieve ergodic convergence rates by ensuring that the sequence of regularization parameters is non-increasing. However, the design of such a sequence poses a significant challenge, as the regularization parameters strongly influence the performance of the proximal mapping problem. Larger regularization parameters, though useful in theory, often fail to capture the curvature of the proximal term effectively, leading to suboptimal convergence. While a smaller non-increasing sequence of regularization parameters can accelerate convergence toward the optimal solution, this approach introduces instability into the algorithm. The challenge, therefore, lies in balancing the size of these parameters to ensure both rapid convergence and algorithmic stability. The choice of these parameters is intricately tied to the derivative of the constraint functions; for any given derivative, a lower bound can be established on the product of the regularization parameters, which serves as a key condition for ensuring convergence. This lower bound prevents the parameters from being too small, while larger parameters are often inefficient at capturing the problem's local shape, thereby slowing convergence rates. Building upon these theoretical foundations, some works have introduced customized proximal matrices to optimize regularization parameter selection. For instance, the authors of \cite{PC3, our2} proposed generalized primal-dual methods that achieve a 25\% reduction in the lower bound of the product of regularization parameters, representing a significant improvement in parameter tuning. Despite this progress, identifying smaller regularization parameters that still meet convergence conditions remains a major obstacle.

To solve this challenge, this paper introduces a novel scaling technique that adjusts the weights of the objective and constraint functions, enabling a reduction in the regularization parameters. This innovation leads to the development of the scaling-aware prediction correction (\textsf{Spice}) method, which achieves a free convergence rate for nonlinear convex problems, overcoming the limitations of existing approaches in terms of both stability and efficiency.

\begin{table}[t]\centering
\caption{Convergence rates of prima-dual methods for convex optimizations ($t$ is the number of iterations, $\omega\in (0, 1)$).}
\scalebox{0.8}{
\begin{tabular}{@{}lcccc@{}}\toprule
\textbf{Algorithm} & \textbf{Linear Constraints} & \textbf{Nonlinear Constraints} & \textbf{Other Assumption} & \textbf{Convergence Rate} \\ \midrule
Arrow-Hurwicz \cite{R1o2}& $\surd$ &   $-$&  Strongly convex& $\mathcal{O}(1/t)$ \\\midrule
Chambbolle-Pock \cite{R7} & $\surd$ & $-$ & Smooth & $\mathcal{O}(\omega^{t})$ \\\midrule
Customized PPA \cite{R9,R10,R11}& $\surd$ &  $-$ & $-$ & $\mathcal{O}(1/t)$\\\midrule
Traditional PC \cite{PC1,PC2} & $\surd$ &  $-$ &  $-$& $\mathcal{O}(1/t)$\\ \midrule
\textsf{Spice} (this work)&  $-$ & $\surd$ & $-$& Free \\ \bottomrule
\end{tabular}}
\label{table0}
\end{table}

\section{Preliminaries}\label{section2}
This section introduces the foundational concepts, specifically the variational inequality, which is a key tool for the analysis later in the paper. Additionally, a scaling technique is proposed to adjust the variational inequality for further optimization.

\subsection{Variational Inequality}
This subsection formalizes the concept of variational inequality in the context of optimization problems. It provides the necessary mathematical background, including gradient representations and saddle point conditions, to establish the optimality criteria for constrained problems.
The gradient of the function $f(\mathbf{x})$ with respect to the vector $\mathbf{x}=[x_{1},\cdots,x_{n}]^{\top}$ is represented as $\mathcal{D}f(\mathbf{x})=[\nabla f(\mathbf{x})]^{\top}$. For a vector function $\Phi(\mathbf{x})$, its gradient is given by:
\begin{equation}
\mathcal{D} \Phi(\mathbf{x})=\left( \begin{array}{c} \mathcal{D}\phi_{1}(\mathbf{x}) \\ \vdots\\ \mathcal{D}\phi_{p}(\mathbf{x})\end{array} \right)=\left( \begin{array}{ccc} \frac{\partial \phi_{1}}{\partial x_{1}} &\cdots&\frac{\partial \phi_{1}}{\partial x_{n}} \\ \vdots&\ddots&\vdots\\ \frac{\partial \phi_{p}}{\partial x_{1}}  &\cdots&\frac{\partial \phi_{p}}{\partial x_{n}} \end{array} \right).\nonumber
\end{equation}
\begin{lemma}\label{sec2-lemma1}
Let $\mathcal{X}\subset \mathbb{R}^{n}$ be a closed convex set, with $f(\mathbf{x})$ and $h(\mathbf{x})$ as convex functions, where $h(\mathbf{x})$ is differentiable. Assume the minimization problem $\min\{ f(\mathbf{x})+ h(\mathbf{x})\mid \mathbf{x}\in \mathcal{X}\}$ has a nonempty solution set.  The vector $\mathbf{x}^{*}$ is an optimal solution, i.e.,
\begin{equation}
\mathbf{x}^{*}\in \arg \min \{ f(\mathbf{x})+  h(\mathbf{x})\mid\mathbf{x}\in \mathcal{X}\}, \nonumber
\end{equation}
if and only if 
\begin{equation}
\mathbf{x}^{*}\in \mathcal{X}, \quad f(\mathbf{x})- f(\mathbf{x}^{*})+  (\mathbf{x}-\mathbf{x}^{*})^{\top}\nabla  h(\mathbf{x}^{*})\ge0,\quad\forall \ \mathbf{x}\in \mathcal{X}.\nonumber
\end{equation}
\end{lemma}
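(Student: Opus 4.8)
The plan is to establish the two implications of the stated equivalence separately, relying only on the convexity of $f$ and $h$, the differentiability of $h$, and the convexity of the feasible set $\mathcal{X}$. Since $f$ is \emph{not} assumed differentiable, the argument must be arranged so that a derivative is taken only of the smooth part $h$, while the nonsmooth part $f$ is handled purely through convexity and a limiting procedure.

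For the sufficiency direction (assuming the variational inequality and deducing optimality) I would invoke the gradient inequality for the convex differentiable function $h$, namely $h(\mathbf{x}) - h(\mathbf{x}^{*}) \ge (\mathbf{x}-\mathbf{x}^{*})^{\top}\nabla h(\mathbf{x}^{*})$ for every $\mathbf{x}\in\mathcal{X}$. Adding this to the hypothesized inequality $f(\mathbf{x})-f(\mathbf{x}^{*})+(\mathbf{x}-\mathbf{x}^{*})^{\top}\nabla h(\mathbf{x}^{*})\ge 0$ immediately yields $f(\mathbf{x})+h(\mathbf{x}) \ge f(\mathbf{x}^{*})+h(\mathbf{x}^{*})$ for all $\mathbf{x}\in\mathcal{X}$, which is precisely the statement that $\mathbf{x}^{*}$ minimizes $f+h$ over $\mathcal{X}$. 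This direction is routine.

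For the necessity direction (assuming $\mathbf{x}^{*}$ is optimal and deducing the variational inequality) I would use the standard perturbation-along-a-segment technique. Fix an arbitrary $\mathbf{x}\in\mathcal{X}$ and, for $t\in(0,1]$, form the convex combination $\mathbf{x}_{t}=(1-t)\mathbf{x}^{*}+t\mathbf{x}$, which lies in $\mathcal{X}$ by convexity. Optimality of $\mathbf{x}^{*}$ gives $f(\mathbf{x}_{t})+h(\mathbf{x}_{t})\ge f(\mathbf{x}^{*})+h(\mathbf{x}^{*})$, while convexity of $f$ gives $f(\mathbf{x}_{t})-f(\mathbf{x}^{*})\le t\bigl(f(\mathbf{x})-f(\mathbf{x}^{*})\bigr)$. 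Combining the two and dividing by $t>0$ produces $f(\mathbf{x})-f(\mathbf{x}^{*})+\tfrac{1}{t}\bigl(h(\mathbf{x}_{t})-h(\mathbf{x}^{*})\bigr)\ge 0$.

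The main obstacle, and the only genuinely delicate step, is then passing to the limit $t\to 0^{+}$: because $h$ is differentiable, the difference quotient $\tfrac{1}{t}\bigl(h(\mathbf{x}^{*}+t(\mathbf{x}-\mathbf{x}^{*}))-h(\mathbf{x}^{*})\bigr)$ converges to the directional derivative $(\mathbf{x}-\mathbf{x}^{*})^{\top}\nabla h(\mathbf{x}^{*})$, yielding the desired inequality $f(\mathbf{x})-f(\mathbf{x}^{*})+(\mathbf{x}-\mathbf{x}^{*})^{\top}\nabla h(\mathbf{x}^{*})\ge 0$ for all $\mathbf{x}\in\mathcal{X}$. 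The care here lies in ensuring that the nonsmooth term $f$ survives the limit as the fixed quantity $f(\mathbf{x})-f(\mathbf{x}^{*})$ (which is why the convexity estimate on $f$ is applied \emph{before} taking the limit), so that no derivative of $f$ is ever required.
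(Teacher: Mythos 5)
Your proof is correct and complete: the sufficiency direction via the gradient inequality for the convex differentiable part $h$, and the necessity direction via the segment perturbation $\mathbf{x}_{t}=(1-t)\mathbf{x}^{*}+t\mathbf{x}$ with the convexity bound on $f$ applied before letting $t\to 0^{+}$, is exactly the standard argument. The paper itself does not prove this lemma but defers to Lemma 2.1 of the cited reference, and that reference's proof proceeds along the same lines as yours, so there is nothing to reconcile.
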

\begin{proof}
Please refer to the work in \cite{our1} (Lemma 2.1).
\end{proof}

 Let $(\mathbf{x}^{*},\boldsymbol{\lambda}^{*})$ be the saddle point of \eqref{sec1-eq2} that satisfies the following conditions: 
\begin{empheq}[left=\empheqlbrace]{alignat=2}
&\mathbf{x}^{*}\in\arg\min\{\mathcal{L}(\mathbf{x},\boldsymbol{\lambda}^{*})\mid \mathbf{x}\in \mathcal{X}\},\nonumber\\[0.1cm] 
&\boldsymbol{\lambda}^{*}\in\arg\max\{\mathcal{L}(\mathbf{x}^{*},\boldsymbol{\lambda})\mid \boldsymbol{\lambda}\in \mathcal{Z}\}.\nonumber
\end{empheq}
By Lemma \ref{sec2-lemma1}, the saddle point follows the variational inequalities:
\begin{empheq}[left=\empheqlbrace]{alignat=2}
    & \mathbf{x}^{*}\in \mathcal{X},\quad&  f(\mathbf{x})- f(\mathbf{x}^{*})+(\mathbf{x}-\mathbf{x}^{*})^{\top}\mathcal{D} \Phi (\mathbf{x}^{*})^{\top}\boldsymbol{\lambda}^{*}\ge0,&\quad \forall \ \mathbf{x}\in \mathcal{X},\nonumber\\[0.1cm] 
    & \boldsymbol{\lambda}^{*}\in  \mathcal{Z},\quad& (\boldsymbol{\lambda}-\boldsymbol{\lambda}^{*})^{\top} [- \Phi (\mathbf{x}^{*})]\ge0,&\quad \forall \ \boldsymbol{\lambda}\in  \mathcal{Z},\nonumber
\end{empheq}
where $\mathcal{D} \Phi (\mathbf{x})=[\nabla \phi_{1},\cdots,\nabla \phi_{p}]^{\top}\in \mathbb{R}^{p\times n}$ and $\mathcal{D} \Phi (\mathbf{x})^{\top}\boldsymbol{\lambda}=\sum_{i=1}^{p} \lambda_{i}\nabla \phi_{i}(\mathbf{x})\in \mathbb{R}^{n\times 1}.$
The following monotone variational inequality can characterize the optimal condition:
\begin{equation}
 \mathbf{w}^{*}\in \Omega, \quad f(\mathbf{x})- f(\mathbf{x}^{*})+(\mathbf{w}-\mathbf{w}^{*})^{\top}\boldsymbol{\Gamma}(\mathbf{w}^{*})\ge0, \quad\forall \ \mathbf{w}\in \Omega, \label{sec2-eq1}
\end{equation}
where 
\begin{equation}
 \mathbf{w}=\left( \begin{array}{c} \mathbf{x} \\
      \boldsymbol{\lambda} \end{array} \right), \quad
 \boldsymbol{\Gamma}(\mathbf{w})=\left( \begin{array}{c} \mathcal{D} \Phi (\mathbf{x})^{\top}\boldsymbol{\lambda} \\
        -\Phi (\mathbf{x}) \end{array} \right) ,\quad\mbox{ and } \quad\Omega=\mathcal{X}\times \mathcal{Z} .\label{sec2-eq2}
\end{equation}

\begin{lemma}\label{sec2-lemma2}
Let $\mathcal{X}\subset \mathbb{R}^{n}$, $ \mathcal{Z}:=\mathbb{R}^{p_{1}}_{+}\times \mathbb{R}^{p_{2}}$ be closed convex sets. Then the operator $\boldsymbol{\Gamma}$ defined in (\ref{sec2-eq4}) satisfies 
\begin{equation}
(\mathbf{w}-\bar{\mathbf{w}})^{\top}\left[\boldsymbol{\Gamma}(\mathbf{w})-\boldsymbol{\Gamma}(\bar{\mathbf{w}})\right]\ge0, \quad\forall \ \mathbf{w}, \bar{\mathbf{w}}\in \Omega.\label{sec2-eq3}
\end{equation}
\end{lemma}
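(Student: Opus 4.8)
The plan is to observe that $\boldsymbol{\Gamma}$ is exactly the saddle-point operator of the convex--concave function $L(\mathbf{x},\boldsymbol{\lambda})=\boldsymbol{\lambda}^{\top}\Phi(\mathbf{x})$: its primal block $\mathcal{D}\Phi(\mathbf{x})^{\top}\boldsymbol{\lambda}$ is $\nabla_{\mathbf{x}}L$, and its dual block $-\Phi(\mathbf{x})$ is $-\nabla_{\boldsymbol{\lambda}}L$. Monotonicity of such operators is classical, but I would give a self-contained computation that makes transparent why the nonnegativity of the multipliers is the essential ingredient, rather than invoking a black-box theorem.

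First I would write $\mathbf{w}-\bar{\mathbf{w}}=(\mathbf{x}-\bar{\mathbf{x}},\,\boldsymbol{\lambda}-\bar{\boldsymbol{\lambda}})$ and split the bilinear form into a primal block and a dual block,
\[
(\mathbf{w}-\bar{\mathbf{w}})^{\top}[\boldsymbol{\Gamma}(\mathbf{w})-\boldsymbol{\Gamma}(\bar{\mathbf{w}})]
=\underbrace{(\mathbf{x}-\bar{\mathbf{x}})^{\top}\big[\mathcal{D}\Phi(\mathbf{x})^{\top}\boldsymbol{\lambda}-\mathcal{D}\Phi(\bar{\mathbf{x}})^{\top}\bar{\boldsymbol{\lambda}}\big]}_{T_{1}}
+\underbrace{(\boldsymbol{\lambda}-\bar{\boldsymbol{\lambda}})^{\top}\big[\Phi(\bar{\mathbf{x}})-\Phi(\mathbf{x})\big]}_{T_{2}}.
\]
Using the identity $\mathcal{D}\Phi(\mathbf{x})^{\top}\boldsymbol{\lambda}=\sum_{i}\lambda_{i}\nabla\phi_{i}(\mathbf{x})$ recorded after \eqref{sec2-eq2}, I would then expand these coordinate-wise over the constraints as $T_{1}=\sum_{i}\big[\lambda_{i}(\mathbf{x}-\bar{\mathbf{x}})^{\top}\nabla\phi_{i}(\mathbf{x})-\bar{\lambda}_{i}(\mathbf{x}-\bar{\mathbf{x}})^{\top}\nabla\phi_{i}(\bar{\mathbf{x}})\big]$ and $T_{2}=\sum_{i}(\lambda_{i}-\bar{\lambda}_{i})\big[\phi_{i}(\bar{\mathbf{x}})-\phi_{i}(\mathbf{x})\big]$.

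The key step is a regrouping of $T_{1}+T_{2}$ that pairs each gradient inner product with the matching function difference, producing
\[
(\mathbf{w}-\bar{\mathbf{w}})^{\top}[\boldsymbol{\Gamma}(\mathbf{w})-\boldsymbol{\Gamma}(\bar{\mathbf{w}})]
=\sum_{i}\lambda_{i}\,d_{i}(\bar{\mathbf{x}},\mathbf{x})+\sum_{i}\bar{\lambda}_{i}\,d_{i}(\mathbf{x},\bar{\mathbf{x}}),
\]
where $d_{i}(\mathbf{y},\mathbf{z}):=\phi_{i}(\mathbf{y})-\phi_{i}(\mathbf{z})-(\mathbf{y}-\mathbf{z})^{\top}\nabla\phi_{i}(\mathbf{z})$ is the first-order gap of $\phi_{i}$ at the base point $\mathbf{z}$. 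By the first-order convexity inequality for the differentiable convex constraint functions (the same characterization used in Lemma \ref{sec2-lemma1}), $d_{i}(\mathbf{y},\mathbf{z})\ge 0$ for all $\mathbf{y},\mathbf{z}$. On the $\mathbb{R}^{p_{1}}_{+}$ block the multipliers satisfy $\lambda_{i},\bar{\lambda}_{i}\ge 0$, so every summand is nonnegative; on the $\mathbb{R}^{p_{2}}$ block the associated constraints are affine, whence $d_{i}\equiv 0$ and the free-sign multipliers contribute nothing. This establishes \eqref{sec2-eq3}.

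The main obstacle I anticipate is purely the bookkeeping in the regrouping. After expanding $T_{1}$ one must add and subtract a cross term---for instance $\bar{\lambda}_{i}(\mathbf{x}-\bar{\mathbf{x}})^{\top}\nabla\phi_{i}(\mathbf{x})$, or symmetrically $\lambda_{i}(\mathbf{x}-\bar{\mathbf{x}})^{\top}\nabla\phi_{i}(\bar{\mathbf{x}})$---so that the $\phi_{i}$-differences carried by $T_{2}$ fuse with the gradient inner products to form the two gaps $d_{i}(\bar{\mathbf{x}},\mathbf{x})$ and $d_{i}(\mathbf{x},\bar{\mathbf{x}})$. A naive one-sided splitting instead leaves a residual $\sum_{i}(\lambda_{i}-\bar{\lambda}_{i})\,(\cdot)$ of indeterminate sign, so it is precisely the \emph{symmetric} pairing, weighted simultaneously by $\lambda_{i}$ and $\bar{\lambda}_{i}$, that forces cancellation. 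This is also the point at which nonnegativity of the inequality multipliers (and affineness of any equality constraints) is indispensable: without it the gap terms, though individually nonnegative, would be scaled by coefficients of uncontrolled sign and monotonicity could fail.
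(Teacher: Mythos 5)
Your proof is correct, and it is essentially the argument this paper itself deploys: although the proof of Lemma~\ref{sec2-lemma2} is only a citation to \cite{our1}, the paper carries out exactly your computation for the separable analogue (Lemma~\ref{sec4-lemma1}), decomposing the bilinear form into primal and dual blocks and regrouping it as $\boldsymbol{\lambda}^{\top}\boldsymbol{\Theta}'(\bar{\mathbf{x}}\mid\mathbf{x})+\bar{\boldsymbol{\lambda}}^{\top}\boldsymbol{\Theta}'(\mathbf{x}\mid\bar{\mathbf{x}})$, i.e.\ your two first-order gaps $d_{i}(\bar{\mathbf{x}},\mathbf{x})$ and $d_{i}(\mathbf{x},\bar{\mathbf{x}})$ weighted by the nonnegative multipliers. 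Your identity is exact as written (no added-and-subtracted cross term is actually needed), and your explicit handling of the $\mathbb{R}^{p_{2}}$ block via affineness of the equality constraints is a point the paper leaves implicit.
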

\begin{proof}
The proof can be found in \cite{our1} (Lemma 2.2).
\end{proof}

\subsection{Scaling Technique}
This subsection introduces a novel scaling technique to scale the objective and constraint functions. It shows how scaling factors impact the variational inequality, leading to a reformulated optimization problem with improved flexibility in solution methods.
\begin{lemma}\label{sec2-lemma3}
For any scaling factors $\rho>0,\eta>0$, if $\mathbf{x}^{*}$ is the optimal solution of  $\mathsf{P}0$, the following variational inequality holds
\begin{equation}
 \mathbf{w}^{*}\in \Omega, \quad \rho\left[f(\mathbf{x})- f(\mathbf{x}^{*})\right]+(\mathbf{w}-\mathbf{w}^{*})^{\top}\frac{1}{\eta}\boldsymbol{\Gamma}(\mathbf{w}^{*})\ge0, \quad\forall \ \mathbf{w}\in \Omega. \label{sec2-eq4}
\end{equation}
\end{lemma}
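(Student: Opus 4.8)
The plan is to read \eqref{sec2-eq4} as the optimality characterization of a \emph{rescaled} copy of $\mathsf{P}0$ and then to repeat, on the scaled data, the derivation that produced \eqref{sec2-eq1}. The enabling observation is that weighting the objective by $\rho>0$ and each constraint by $1/\eta>0$ changes neither the feasible set—because $\tfrac{1}{\eta}\phi_i(\mathbf{x})\le 0\iff\phi_i(\mathbf{x})\le 0$ for $\eta>0$—nor the minimizer set of a positively scaled objective. Hence the same $\mathbf{x}^*$ assumed optimal for $\mathsf{P}0$ also solves
\[
  \min\Bigl\{\rho f(\mathbf{x})\ \Bigm|\ \tfrac{1}{\eta}\phi_i(\mathbf{x})\le 0,\ \mathbf{x}\in\mathcal{X},\ i=1,\dots,p\Bigr\},
\]
whose Lagrangian is $\tilde{\mathcal{L}}(\mathbf{x},\boldsymbol{\lambda})=\rho f(\mathbf{x})+\tfrac{1}{\eta}\boldsymbol{\lambda}^{\top}\Phi(\mathbf{x})$. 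I would let $(\mathbf{x}^*,\boldsymbol{\lambda}^*)$ be a saddle point of $\tilde{\mathcal{L}}$ over $\Omega$; its existence follows from that of the unscaled saddle point of \eqref{sec1-eq2} upon taking $\boldsymbol{\lambda}^*=\rho\eta\,\boldsymbol{\lambda}_0^*$, where $\boldsymbol{\lambda}_0^*$ is the multiplier of \eqref{sec1-eq2}, since $\tilde{\mathcal{L}}(\mathbf{x},\boldsymbol{\lambda}^*)=\rho\bigl[f(\mathbf{x})+\boldsymbol{\lambda}_0^{*\top}\Phi(\mathbf{x})\bigr]$ is then minimized at $\mathbf{x}^*$.

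With this identification, I would extract the two saddle-point inequalities exactly as in the passage leading to \eqref{sec2-eq1}. For the primal part, Lemma \ref{sec2-lemma1} applied to $\min_{\mathbf{x}\in\mathcal{X}}\tilde{\mathcal{L}}(\mathbf{x},\boldsymbol{\lambda}^*)$—with nonsmooth convex term $\rho f$ and convex differentiable term $h(\mathbf{x})=\tfrac{1}{\eta}\boldsymbol{\lambda}^{*\top}\Phi(\mathbf{x})$, whose gradient at $\mathbf{x}^*$ is $\tfrac{1}{\eta}\mathcal{D}\Phi(\mathbf{x}^*)^{\top}\boldsymbol{\lambda}^*$—yields $\rho[f(\mathbf{x})-f(\mathbf{x}^*)]+\tfrac{1}{\eta}(\mathbf{x}-\mathbf{x}^*)^{\top}\mathcal{D}\Phi(\mathbf{x}^*)^{\top}\boldsymbol{\lambda}^*\ge 0$. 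For the dual part, the maximization $\boldsymbol{\lambda}^*\in\arg\max_{\boldsymbol{\lambda}\in\mathcal{Z}}\tilde{\mathcal{L}}(\mathbf{x}^*,\boldsymbol{\lambda})$ is linear in $\boldsymbol{\lambda}$, so its first-order condition gives $\tfrac{1}{\eta}(\boldsymbol{\lambda}-\boldsymbol{\lambda}^*)^{\top}[-\Phi(\mathbf{x}^*)]\ge 0$. Adding the two and regrouping with $\mathbf{w}=(\mathbf{x},\boldsymbol{\lambda})$ and the operator $\boldsymbol{\Gamma}$ of \eqref{sec2-eq2}—the primal inequality supplying the $\mathcal{D}\Phi(\mathbf{x}^*)^{\top}\boldsymbol{\lambda}^*$ block and the dual inequality the $-\Phi(\mathbf{x}^*)$ block of $\tfrac{1}{\eta}\boldsymbol{\Gamma}(\mathbf{w}^*)$—reproduces \eqref{sec2-eq4}.

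I expect the main obstacle to be conceptual rather than computational. Because $\rho$ and $1/\eta$ multiply different blocks, \eqref{sec2-eq4} is \emph{not} a uniform rescaling of \eqref{sec2-eq1}; multiplying \eqref{sec2-eq1} through by a single constant cannot produce it, and indeed a one-line check with $f(x)=x$, $\phi(x)=-x$, $\mathcal{X}=\mathbb{R}$ shows the naive rescaling fails unless $\rho\eta=1$. The delicate point is therefore the correct pairing of $\mathbf{x}^*$ with the multiplier of the \emph{scaled} problem (the factor $\rho\eta$ above) together with the verification that $\mathbf{x}^*$ still minimizes $\tilde{\mathcal{L}}(\cdot,\boldsymbol{\lambda}^*)$; once this is settled, the remaining combination of Lemma \ref{sec2-lemma1} with the linear dual step is routine.
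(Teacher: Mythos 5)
Your proposal is correct and follows essentially the same route as the paper: rewrite $\mathsf{P}0$ as the scaled problem with objective $\rho f$ and constraints $\tfrac{1}{\eta}\phi_i\le 0$, form the scaled Lagrangian, and apply Lemma \ref{sec2-lemma1} to the two saddle-point conditions before assembling the unified variational inequality. Your additional step of exhibiting the scaled multiplier explicitly as $\rho\eta\,\boldsymbol{\lambda}_0^{*}$ is a useful clarification that the paper leaves implicit, but it does not change the substance of the argument.
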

\begin{proof}
For any scaling factors  $\rho>0, \eta>0$, the original problem $\mathsf{P}0$ can be rewritten as 
\begin{equation}
\begin{aligned}
 \mathsf{P}1:\quad \min\left\{ \rho f(\mathbf{x}) \mid   \frac{1}{\eta}\phi_{i}(\mathbf{x}) \leq 0, \frac{1}{\eta}(\mathbf{A}\mathbf{x}-\mathbf{b})=\mathbf{0}, \ \mathbf{x}\in \mathcal{X},\ i=1,\cdots,p_{1}\right\}.
\end{aligned}\label{sec2-eq5}
\end{equation}
The scaling Lagrangian function of $\mathsf{P}1$ is defined as 
\begin{equation}
  \mathcal{L}(\mathbf{x},\boldsymbol{\lambda},\rho,\eta)= \rho f(\mathbf{x})+\boldsymbol{\lambda}^{\top} \frac{1}{\eta} \Phi (\mathbf{x}).\ \label{sec2-eq6}
\end{equation}
For given fixed values of $\rho$ and $\eta$, the saddle point of (\ref{sec2-eq6}) can written as
\begin{empheq}[left=\empheqlbrace]{alignat=2}
         &\mathbf{x}^{*}\in\arg\min\{\mathcal{L}(\mathbf{x},\boldsymbol{\lambda}^{*},\rho,\eta)\mid \mathbf{x}\in \mathcal{X}\},\nonumber\\[0.1cm] 
         &\boldsymbol{\lambda}^{*}\in\arg\max\{\mathcal{L}(\mathbf{x}^{*},\boldsymbol{\lambda},\rho,\eta)\mid \boldsymbol{\lambda}\in \mathcal{Z}\}.\nonumber
  \end{empheq}
They further follow the variational inequalities below:
\begin{empheq}[left=\empheqlbrace]{alignat=2}
     \mathbf{x}^{*}\in \mathcal{X},&& \quad \rho\left[f(\mathbf{x})- f(\mathbf{x}^{*})\right]+(\mathbf{x}-\mathbf{x}^{*})^{\top}\frac{1}{\eta}\mathcal{D} \Phi (\mathbf{x}^{*})^{\top}\boldsymbol{\lambda}^{*}\ge0,& \quad\forall \ \mathbf{x}\in \mathcal{X},\nonumber\\[0.1cm] 
     \boldsymbol{\lambda}^{*}\in  \mathcal{Z},&&\quad (\boldsymbol{\lambda}-\boldsymbol{\lambda}^{*})^{\top} [- \frac{1}{\eta}\Phi (\mathbf{x}^{*})]\ge0,&\quad \forall \ \boldsymbol{\lambda}\in  \mathcal{Z}.\nonumber
  \end{empheq}
The above inequalities can be described as a unified variational inequality:
\begin{equation}
 \mathbf{w}^{*}\in \Omega, \quad \rho\left[f(\mathbf{x})- f(\mathbf{x}^{*})\right]+(\mathbf{w}-\mathbf{w}^{*})^{\top}\frac{1}{\eta}\boldsymbol{\Gamma}(\mathbf{w}^{*})\ge0, \quad\forall \ \mathbf{w}\in \Omega. \nonumber
\end{equation}
Thus, this lemma is proven.
\end{proof}

\begin{remark}
For any $\rho>0$ and $\eta>0$, the variational inequality (\ref{sec2-eq4}) can be reformulated as
\begin{equation}
 \mathbf{w}^{*}\in \Omega, \quad f(\mathbf{x})- f(\mathbf{x}^{*})+(\mathbf{w}-\mathbf{w}^{*})^{\top}\frac{1}{\eta\rho}\boldsymbol{\Gamma}(\mathbf{w}^{*})\ge0, \quad\forall \ \mathbf{w}\in \Omega.\label{sec2-eq7}
\end{equation}
Since $f(\mathbf{x})- f(\mathbf{x}^{*})\ge0$ and $(\mathbf{w}-\mathbf{w}^{*})^{\top}\boldsymbol{\Gamma}(\mathbf{w}^{*})\ge0$, the inequality \eqref{sec2-eq7} implies that the combination of the function value difference and the term involving the gradient $\boldsymbol{\Gamma}$ always results in a non-negative value. This indicates that $\mathbf{w}^{*}$ is indeed an optimal solution in the feasible set $\Omega$ because any deviation $\mathbf{w}$ from $\mathbf{w}^{*}$ within the set $\Omega$ does not reduce the objective function value. Thus, the variational inequality effectively serves as a condition ensuring that $\mathbf{x}^{*}$ minimizes the function $f$ over the set $\Omega$.
\end{remark}

\section{Scaling-Aware Prediction Correction Method}
The \textsf{Spice} method consists of two operation steps. The first one involves utilizing the PPA scheme to predict the variables. In the second step, a matrix is applied to correct the predicted variables.

\begin{lemma}\label{sec3-lemma1}
Let $\mathbf{Q}$ be a second-order diagonal scalar upper triangular block matrix defined as:
\begin{equation}
\mathbf{Q} = \begin{pmatrix} 
r \mathbf{I}_n & \boldsymbol{\Lambda} \\ 
\mathbf{0} & s \mathbf{I}_p 
\end{pmatrix},\nonumber
\end{equation}
where $r>0, s >0$ are two positive constants, $\mathbf{I}_n$ and $\mathbf{I}_p$ are identity matrices, and $\boldsymbol{\Lambda} $ is an $n \times p$ matrix. For any vector $\mathbf{w} \in \mathbb{R}^{n+p}$, the following equality holds:
\begin{equation}
\mathbf{w}^\top \mathbf{Q} \mathbf{w} = \mathbf{w}^\top \frac{1}{2} (\mathbf{Q} + \mathbf{Q}^\top) \mathbf{w}.\nonumber
\end{equation}
\end{lemma}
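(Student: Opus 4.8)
The plan is to prove this by the standard symmetrization identity for quadratic forms, which holds for \emph{any} square matrix and does not actually depend on the specific block-triangular structure of $\mathbf{Q}$. The essential observation I would start from is that $\mathbf{w}^\top \mathbf{Q} \mathbf{w}$ is a scalar, i.e., a $1 \times 1$ matrix, and therefore coincides with its own transpose.

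Concretely, first I would note that since $(\mathbf{w}^\top \mathbf{Q} \mathbf{w})$ is a real number, we have
\begin{equation}
\mathbf{w}^\top \mathbf{Q} \mathbf{w} = \left(\mathbf{w}^\top \mathbf{Q} \mathbf{w}\right)^\top = \mathbf{w}^\top \mathbf{Q}^\top \mathbf{w},\nonumber
\end{equation}
using $(\mathbf{A}\mathbf{B}\mathbf{C})^\top = \mathbf{C}^\top \mathbf{B}^\top \mathbf{A}^\top$ and $(\mathbf{w}^\top)^\top = \mathbf{w}$. Second, I would average these two equal expressions:
\begin{equation}
\mathbf{w}^\top \mathbf{Q} \mathbf{w} = \frac{1}{2}\left(\mathbf{w}^\top \mathbf{Q} \mathbf{w} + \mathbf{w}^\top \mathbf{Q}^\top \mathbf{w}\right) = \mathbf{w}^\top \frac{1}{2}\left(\mathbf{Q} + \mathbf{Q}^\top\right) \mathbf{w},\nonumber
\end{equation}
where the last step factors out $\mathbf{w}^\top$ on the left and $\mathbf{w}$ on the right by linearity. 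This is exactly the claimed identity.

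The one genuine subtlety worth flagging is that the lemma is stated for the particular matrix $\mathbf{Q} = \left(\begin{smallmatrix} r\mathbf{I}_n & \boldsymbol{\Lambda} \\ \mathbf{0} & s\mathbf{I}_p\end{smallmatrix}\right)$, but the identity requires nothing about triangularity, positivity of $r,s$, or the shape of $\boldsymbol{\Lambda}$; it is valid for every square matrix. I expect the ``main obstacle'' to be purely presentational rather than mathematical: the result is essentially immediate from the scalar-transpose trick, so the task is to state it cleanly and to make clear that the block form of $\mathbf{Q}$ is only relevant for how the symmetrized matrix $\tfrac12(\mathbf{Q}+\mathbf{Q}^\top)$ will be used later in the convergence analysis, not for the identity itself. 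If desired, one could instead give a direct component-wise verification by expanding $\mathbf{w}^\top\mathbf{Q}\mathbf{w}$ in block form and collecting the cross terms involving $\boldsymbol{\Lambda}$, but this would be strictly more laborious and would obscure the fact that the identity is entirely structure-independent.
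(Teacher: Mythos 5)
Your proof is correct, and it takes a genuinely different route from the paper's. You use the scalar-transpose trick: $\mathbf{w}^\top\mathbf{Q}\mathbf{w}$ is a $1\times 1$ matrix, hence equals $\mathbf{w}^\top\mathbf{Q}^\top\mathbf{w}$, and averaging the two gives the identity for \emph{any} square matrix. The paper instead performs exactly the component-wise verification you mention as the laborious alternative: it expands $\mathbf{w}^\top\mathbf{Q}\mathbf{w}$ in block form to get $r\|\mathbf{x}\|^2 + \mathbf{x}^\top\boldsymbol{\Lambda}\boldsymbol{\lambda} + s\|\boldsymbol{\lambda}\|^2$, explicitly computes $\tfrac12(\mathbf{Q}+\mathbf{Q}^\top) = \bigl(\begin{smallmatrix} r\mathbf{I}_n & \frac12\boldsymbol{\Lambda} \\ \frac12\boldsymbol{\Lambda}^\top & s\mathbf{I}_p\end{smallmatrix}\bigr)$, expands its quadratic form, and matches the two using the fact that the cross terms $\mathbf{x}^\top\boldsymbol{\Lambda}\boldsymbol{\lambda}$ and $\boldsymbol{\lambda}^\top\boldsymbol{\Lambda}^\top\mathbf{x}$ are equal scalars (so the paper ultimately invokes the same scalar-transpose fact, just at the level of a single cross term rather than the whole form). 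Your argument is shorter, structure-independent, and makes clear the hypotheses $r,s>0$ and the triangular block form are irrelevant to the identity; what the paper's expansion buys is the explicit formula for $\tfrac12(\mathbf{Q}+\mathbf{Q}^\top)$, which is then needed in the subsequent remark to state the positive-definiteness condition $r\cdot s > \tfrac14\|\boldsymbol{\Lambda}\|^2$. You correctly flag this point yourself, so nothing is missing.
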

\begin{proof}
Given the matrix $\mathbf{Q}$ and vector $\mathbf{w}$, the quadratic form is:
\[
\mathbf{w}^\top \mathbf{Q} \mathbf{w} = \begin{pmatrix} \mathbf{x}^\top & \boldsymbol{\lambda}^\top \end{pmatrix} \begin{pmatrix} r \mathbf{I}_n & \boldsymbol{\Lambda}  \\ \mathbf{0} & s \mathbf{I}_p \end{pmatrix} \begin{pmatrix} \mathbf{x} \\ \boldsymbol{\lambda} \end{pmatrix}.
\]
Expanding this expression:
\[
\mathbf{w}^\top \mathbf{Q} \mathbf{w} = r \|\mathbf{x}\|^2 + \mathbf{x}^\top \boldsymbol{\Lambda}  \boldsymbol{\lambda} + s \|\boldsymbol{\lambda}\|^2.
\]
Now, consider the symmetric part of $\mathbf{Q}$, which is $\frac{1}{2} (\mathbf{Q} + \mathbf{Q}^\top)$:
\[
\mathbf{Q}^\top = \begin{pmatrix} r \mathbf{I}_n & \mathbf{0} \\ \boldsymbol{\Lambda}^\top & s \mathbf{I}_p \end{pmatrix}.
\]
Thus:
\[
\frac{1}{2} (\mathbf{Q} + \mathbf{Q}^\top) = \frac{1}{2} \begin{pmatrix} r \mathbf{I}_n & \boldsymbol{\Lambda} \\ \mathbf{0} & s \mathbf{I}_p \end{pmatrix} + \frac{1}{2} \begin{pmatrix} r \mathbf{I}_n & \mathbf{0} \\ \boldsymbol{\Lambda} ^\top & s \mathbf{I}_p \end{pmatrix} = \begin{pmatrix} r \mathbf{I}_n & \frac{1}{2} \boldsymbol{\Lambda}  \\ \frac{1}{2} \ \boldsymbol{\Lambda} ^\top & s \mathbf{I}_p \end{pmatrix}.
\]
Now, the quadratic form $\mathbf{w}^\top \frac{1}{2} (\mathbf{Q} + \mathbf{Q}^\top) \mathbf{w}$ is
\[
\mathbf{w}^\top \frac{1}{2} (\mathbf{Q} + \mathbf{Q}^\top) \mathbf{w} = \begin{pmatrix} \mathbf{x}^\top & \boldsymbol{\lambda}^\top \end{pmatrix} \begin{pmatrix} r \mathbf{I}_n & \frac{1}{2} \boldsymbol{\Lambda}  \\ \frac{1}{2} \boldsymbol{\Lambda}^\top & s \mathbf{I}_p \end{pmatrix} \begin{pmatrix} \mathbf{x} \\ \boldsymbol{\lambda} \end{pmatrix}.
\]
Expanding this:
\[
\mathbf{w}^\top \frac{1}{2} (\mathbf{Q} + \mathbf{Q}^\top) \mathbf{w} = r \|\mathbf{x}\|^2 + \frac{1}{2} \mathbf{x}^\top \boldsymbol{\Lambda}  \boldsymbol{\lambda} + \frac{1}{2} \boldsymbol{\lambda}^\top \boldsymbol{\Lambda} ^\top \mathbf{x} + s \|\boldsymbol{\lambda}\|^2.
\]
Since $\mathbf{x}^\top \boldsymbol{\Lambda}  \boldsymbol{\lambda}$ and $\boldsymbol{\lambda}^\top \boldsymbol{\Lambda} ^\top \mathbf{x}$ are scalars and equal, we have
\[
\mathbf{w}^\top \frac{1}{2} (\mathbf{Q} + \mathbf{Q}^\top) \mathbf{w} = r \|\mathbf{x}\|^2 + s \|\boldsymbol{\lambda}\|^2 + \mathbf{x}^\top \boldsymbol{\Lambda} \boldsymbol{\lambda}.
\]
This result matches the original quadratic form:
\[
\mathbf{w}^\top \mathbf{Q} \mathbf{w} = r \|\mathbf{x}\|^2 + s \|\boldsymbol{\lambda}\|^2 + \mathbf{x}^\top \boldsymbol{\Lambda} \boldsymbol{\lambda}.
\]
Thus, this lemma holds.
\end{proof}
\begin{remark}\label{sec3-remark1}
This corrected proof demonstrates that for any second-order diagonal scalar upper triangular block matrix, the quadratic form $\mathbf{w}^\top \mathbf{Q} \mathbf{w}$ is indeed equal to the quadratic form $\mathbf{w}^\top \frac{1}{2} (\mathbf{Q} + \mathbf{Q}^\top) \mathbf{w}$, without any additional factors in the mixed term. In particular, if $r\cdot s> \frac{1}{4}\| \boldsymbol{\Lambda} \|^{2}$ holds, we have $\frac{1}{2} (\mathbf{Q} + \mathbf{Q}^\top)\succ 0$ and $\mathbf{w}^\top \mathbf{Q} \mathbf{w} = \mathbf{w}^\top \frac{1}{2} (\mathbf{Q} + \mathbf{Q}^\top) \mathbf{w}=\|\mathbf{w}\|_{\frac{1}{2} (\mathbf{Q} + \mathbf{Q}^\top)}>0.$
\end{remark}

\subsection{PPA-Like Prediction Scheme}
The PPA highlights its performance between adjacent iterative steps. For the $k$-th iteration and regularization parameters $r_{k}>0,s_{k}>0$, the PPA-like prediction scheme can be described by the following equations:
\begin{subequations}\label{sec3-eq1}
\begin{empheq}[left=\empheqlbrace]{alignat=2} 
&\bar{\mathbf{x}}^{k} = \arg\min \left\{ \mathcal{ L}(\mathbf{x},\boldsymbol{\lambda}^{k},\rho,\eta_k) + \frac{r_{k}}{2}\|\mathbf{x} - \mathbf{x}^{k}\|^2 \mid \mathbf{x}\in \mathcal{X} \right\},\\[0.1cm]
&\boldsymbol{\bar{\lambda}}^{k} = \arg\max \left\{ \mathcal{ L}(\bar{\mathbf{x}}^{k}, \boldsymbol{\lambda},\rho,\eta_k) - \frac{s_{k}}{2}\|\boldsymbol{\lambda} - \boldsymbol{\lambda}^{k}\|^2 \mid  \boldsymbol{\lambda}\in \mathcal{Z} \right\},
\end{empheq}
\end{subequations}
Referring to Lemma \ref{sec2-lemma1}, the saddle point of ($\bar{\mathbf{x}}^{k} ,\boldsymbol{\bar{\lambda}}^{k} $) satisfies the following variational inequalities:
\begin{empheq}[left=\empheqlbrace]{alignat=2}
\bar{\mathbf{x}}^{k}\in \mathcal{X}, &&\quad \rho \left[f(\mathbf{x})- f(\bar{\mathbf{x}}^{k})\right]+(\mathbf{x}-\bar{\mathbf{x}}^{k})^{\top}\left[\frac{1}{\eta_k}\mathcal{D} \Phi (\bar{\mathbf{x}}^{k})^{\top}\boldsymbol{\bar{\lambda}}^{k}\right.\qquad&\nonumber \\[0.1cm]
&&\left.-\frac{1}{\eta_k}\mathcal{D} \Phi (\bar{\mathbf{x}}^{k})^{\top}(\boldsymbol{\bar{\lambda}}^{k}-\boldsymbol{\bar{\lambda}}^{k})+r_{k}(\bar{\mathbf{x}}^{k}-\mathbf{x}^k)\right]& \ge 0, \quad \forall \ \mathbf{x}\in \mathcal{X}, \label{eq22a}\nonumber  \\[0.1cm]
\boldsymbol{\bar{\lambda}}^{k}\in  \mathcal{Z}, &&\quad(\boldsymbol{\lambda}-\boldsymbol{\bar{\lambda}}^{k})^{\top} \left[- \frac{1}{\eta_k}\Phi (\bar{\mathbf{x}}^{k})+ s_{k}(\boldsymbol{\bar{\lambda}}^{k}-\boldsymbol{\lambda}^{k})\right] &\ge 0, \quad \forall \ \boldsymbol{\lambda}\in  \mathcal{Z}. \nonumber 
\end{empheq}
This can be rewritten in a unified form as:
\begin{equation}
  \bar{\mathbf{w}}^{k}\in \Omega, \quad \rho \left[ f(\mathbf{x}) - f(\bar{\mathbf{x}}^{k}) \right] + (\mathbf{w} - \bar{\mathbf{w}}^{k})^{\top} \left[ \frac{1}{\eta_k} \boldsymbol{\Gamma}(\bar{\mathbf{w}}^{k}) + \mathbf{Q}_{k} (\bar{\mathbf{w}}^{k} - \mathbf{w}^{k}) \right] \geq 0, \quad \forall \ \mathbf{w}\in \Omega, \label{sec3-eq2}
\end{equation}
where the predictive (proximal) matrix $\mathbf{Q}_{k}$ is given as:
\begin{equation}
\mathbf{Q}_{k} = \left( \begin{array}{cc} r_{k} \mathbf{I}_{n} & -\frac{1}{\eta_k}\mathcal{D} \Phi (\bar{\mathbf{x}}^{k})^{\top}\\[0.1cm]
        \mathbf{0} & s_{k} \mathbf{I}_{p} \end{array} \right).\nonumber 
\end{equation}
In the PPA-like prediction scheme, the proximal term $\mathbf{Q}_{k}$ is a second-order diagonal scalar upper triangular block matrix. By Remark \ref{sec3-remark1}, if $\mathbf{w} = \mathbf{w}^{k}$ and $r_{k}\cdot s_{k}> \frac{1}{4\eta_{k}^{2}}\|\mathcal{D} \Phi (\bar{\mathbf{x}}^{k})\|^{2}$ hold, the inequality \eqref{sec3-eq2} can be simplified as:
\begin{equation}
  \rho \left[ f(\mathbf{x}^{k}) - f(\bar{\mathbf{x}}^{k}) \right] + (\mathbf{w}^{k} - \bar{\mathbf{w}}^{k})^{\top} \frac{1}{\eta_k} \boldsymbol{\Gamma}(\bar{\mathbf{w}}^{k}) \geq (\mathbf{w}^{k}-\bar{\mathbf{w}}^{k} )^{\top}{\mathbf{Q}_{k}}(\mathbf{w}^{k}-\bar{\mathbf{w}}^{k})>0. \nonumber
\end{equation}
Although the generated sequence satisfies the variational condition, it may converge to a suboptimal solution. 
It has been shown in \cite{our1} that a symmetrical proximal matrix leads to the convergence of the customized PPA. Since the proximal matrix $\mathbf{Q}_{k}$ is not symmetrical, the PPA diverges for some simple linear convex problems \cite{R1}. To ensure the convergence of the proposed method, the predictive variables should be corrected by a corrective matrix. The pseudocode for the \textsf{Spice} method can be found in Algorithm \ref{alg1}.

 \begin{algorithm}[t]
\caption{Scaling-aware prediction correction method}\label{alg1}
\LinesNumbered
\KwIn{ Parameters $\mu>1$; tolerant error $\tau$.}
\KwOut{The optimal solution: $\mathbf{x}^{*}$.}
Initialize $\mathbf{x}^{0},\boldsymbol{\lambda}^{0},k=0,\eta_{0}=1$\;
\While{$error\ge\tau\ $}{
\emph{\% The prediction step:}\\
\If{$k>0$}{
$\eta_{k}'=\eta_{k-1}$; $\eta_{k}=\eta_{k}'$; $\eta_{max}=\mu\cdot\eta_{k}$\; 
\While{ $\eta_{k}< \eta_{max}$}{
$\eta_{k} = \eta_{k}'$; $r_{k}=\frac{1}{\eta_{k}}\sqrt{\mathsf{R}(\mathbf{x}^{k})}$\;
$\bar{\mathbf{x}}^{k}=\arg\min\{\mathcal{L}(\mathbf{x},\boldsymbol{\lambda}^{k},\rho,\eta_{k})+\frac{r_{k}}{2}\|\mathbf{x}-\mathbf{x}^{k}\|^{2}\mid \mathbf{x}\in \mathcal{X}\}$\;
$\eta_{max} = \max \left\{\eta_{k-1} \cdot \sqrt{\frac{\mathsf{R}(\mathbf{x}^{k})}{\mathsf{R}(\mathbf{x}^{k-1})}},\quad \eta_{k-1} \cdot \frac{\mathsf{R}(\bar{\mathbf{x}}^{k}) \sqrt{\mathsf{R}(\mathbf{x}^{k-1})}}{\mathsf{R}(\bar{\mathbf{x}}^{k-1}) \sqrt{\mathsf{R}(\mathbf{x}^{k})}} \right\}$\;
$\eta_{k}'=\mu \cdot \eta_{k}$\;
}}
$s_{k}=\frac{\mu}{\eta_{k}} \mathsf{R}(\bar{\mathbf{x}}^{k})/\sqrt{\mathsf{R}(\mathbf{x}^{k})}$\;
$\bar{\boldsymbol{\lambda}}^{k}=\mathsf{P}_{\mathcal{Z}}\left(\boldsymbol{\lambda}^{k}+\frac{1}{\eta_{k}s_{k}}  \Phi (\bar{\mathbf{x}}^{k}) \right)$\;
\emph{\% The correction step:}\\
$\mathbf{w}^{k+1}=\mathbf{w}^{k}-\mathbf{M}_{k}(\mathbf{w}^{k}-\bar{\mathbf{w}}^{k})$\;
$error=\mbox{abs}[ f(\mathbf{x}^{k})- f(\mathbf{x}^{k+1})]$\;
$k=k+1$\;
} 
\Return { $\mathbf{x}^{*}=\mathbf{x}^{k}$}\;
 \end{algorithm}

\subsection{Matrix-Driven Correction Scheme}

In the correction phase, a corrective matrix is employed to adjust the predictive variables. It is important to note that this matrix is not unique; various forms can be utilized, including both upper and lower triangular matrices, as discussed in related work \cite{our1}.

\begin{equation}\label{sec3-eq3}
\mathbf{w}^{k+1} =\mathbf{w}^{k}- \mathbf{M}_{k}(\mathbf{w}^{k}-\bar{\mathbf{w}}^{k}).
\end{equation}
For the sake of simplifying the analysis, in this paper, the corrective matrix $\mathbf{M}_{k}$ is defined as follows:
\begin{equation}
\mathbf{M}_{k}=\left( \begin{array}{cc}\mathbf{I}_{n} &-\frac{1}{\eta_k r_{k}}\mathcal{D} \Phi (\bar{\mathbf{x}}^{k})^{\top}\\[0.1cm]
        \mathbf{0} & \mathbf{I}_{p}\end{array} \right).\nonumber 
\end{equation}
Referring to equation (\ref{sec3-eq3}), the corrective matrix is intrinsically linked to the predictive variable and the newly iterated variables. To facilitate convergence analysis, two extended matrices, $\mathbf{H}_{k}$ and $\mathbf{G}_{k}$, are introduced. These matrices are derived by dividing by $\mathbf{Q}_{k}$, setting the groundwork for the subsequent section. 
\begin{eqnarray} 
  \mathbf{H}_{k}  =  \mathbf{Q}_{k}\mathbf{M}_{k}^{-1}
       & = & \left( \begin{array}{cc}r_{k}\mathbf{I}_{n} &-\frac{1}{\eta_k}\mathcal{D} \Phi (\bar{\mathbf{x}}^{k})^{\top}\\[0.1cm]
        \mathbf{0} & s_{k}\mathbf{I}_{p}\end{array} \right)  \left( \begin{array}{cc}\mathbf{I}_{n} &\frac{1}{\eta_k r_{k}}\mathcal{D} \Phi (\bar{\mathbf{x}}^{k})^{\top}\\[0.1cm]
        \mathbf{0} & \mathbf{I}_{p}\end{array} \right) \nonumber \\
   & = & \left(\begin{array}{cc}
         r_{k}\mathbf{I}_{n} &  \mathbf{0}\\[0.1cm]
         \mathbf{0}  & s_{k}\mathbf{I}_{p} \end{array} \right),\nonumber 
\end{eqnarray}
The first extended matrix, $\mathbf{H}_{k}$, is diagonal. When $r_{k}\approx s_{k}$, it can be considered as a scaled identity matrix by $r_{k}$. Clearly, it is a positive definite matrix since both $r_{k}$ and $s_{k}$ are positive. The second extended matrix, $\mathbf{G}_{k}$, is defined as follows:

\begin{eqnarray}  \label{Matrix-G}
  \mathbf{G}_{k}  &=  &    \mathbf{Q}_{k}^{\top} +\mathbf{Q}_{k} -  \mathbf{M}_{k}^{\top}\mathbf{H}_{k}\mathbf{M}_{k}   =   \mathbf{Q}_{k}^{\top} +\mathbf{Q}_{k} -  \mathbf{M}_{k}^{\top}\mathbf{Q}_{k}       \nonumber \\[0.1cm]
       & = &  \left(\begin{array}{cc}
         2r_{k}\mathbf{I}_{n} &   -\frac{1}{\eta_k}\mathcal{D} \Phi (\bar{\mathbf{x}}^{k})^{\top}\\[0.1cm]
         -\frac{1}{\eta_k}\mathcal{D} \Phi (\bar{\mathbf{x}}^{k})   & 2s_{k}\mathbf{I}_{p} \end{array} \right) -  \left( \begin{array}{cc}\mathbf{I}_{n} &\mathbf{0}\\[0.1cm]
        -\frac{1}{\eta_k r_{k}}\mathcal{D} \Phi (\bar{\mathbf{x}}^{k})& \mathbf{I}_{p}\end{array} \right) \left( \begin{array}{cc}r_{k}\mathbf{I}_{n} &-\frac{1}{\eta_k}\mathcal{D} \Phi (\bar{\mathbf{x}}^{k})^{\top}\\[0.1cm]
        \mathbf{0} & s_{k}\mathbf{I}_{p}\end{array} \right)   \nonumber \\[0.1cm]
        & = &     \left(\begin{array}{cc}
         2r_{k}\mathbf{I}_{n} &   -\frac{1}{\eta_k}\mathcal{D} \Phi (\bar{\mathbf{x}}^{k})^{\top}\\[0.1cm]
         -\frac{1}{\eta_k}\mathcal{D} \Phi (\bar{\mathbf{x}}^{k})   & 2s_{k}\mathbf{I}_{p} \end{array} \right)  -
          \left(\begin{array}{cc}
       r_{k}\mathbf{I}_{n} &   -\frac{1}{\eta_k}\mathcal{D} \Phi (\bar{\mathbf{x}}^{k})^{\top}\\[0.1cm]
        -\frac{1}{\eta_k}\mathcal{D} \Phi (\bar{\mathbf{x}}^{k})  & s_{k}\mathbf{I}_{p}+\frac{1}{\eta_k^{2}r_{k}}\mathcal{D} \Phi (\bar{\mathbf{x}}^{k})\mathcal{D} \Phi (\bar{\mathbf{x}}^{k})^{\top}  \end{array} \right)    \nonumber  \\
          & =&  \left(\begin{array}{cc}
         r_{k}\mathbf{I}_{n} &  \mathbf{0}    \\[0.1cm]
         \mathbf{0}   & s_{k}\mathbf{I}_{p}-\frac{1}{\eta_k^{2}r_{k}}\mathcal{D} \Phi (\bar{\mathbf{x}}^{k})\mathcal{D} \Phi (\bar{\mathbf{x}}^{k})^{\top}  \end{array} \right).\nonumber 
       \end{eqnarray}
This results in a block diagonal matrix. By applying the Schur complement lemma, it is evident that if the condition $ r_{k}\cdot s_{k}\cdot \mathbf{I}_{n}\succ \frac{1}{\eta_k^{2}}\mathcal{D} \Phi (\bar{\mathbf{x}}^{k})^{\top}\mathcal{D} \Phi (\bar{\mathbf{x}}^{k}) $ is met, $\mathbf{G}_{k}$ is positive definite. In this paper, for $\mu>1$ and $k\ge1$, we define $\mathsf{R}(\mathbf{x}^{k})=\|\mathcal{D} \Phi (\mathbf{x}^{k})\|^{2}$ and set:
\begin{subequations}\label{sec3-eq4}
\begin{empheq}[left=\empheqlbrace]{alignat=2}
r_{k}&=\frac{1}{\eta_k}\sqrt{\mathsf{R}(\mathbf{x}^{k})},\\[0.1cm]
s_{k}&=\frac{\mu\mathsf{R}(\bar{\mathbf{x}}^{k})}{\eta_k\sqrt{\mathsf{R}(\mathbf{x}^{k})}}.
\end{empheq}
\end{subequations}
To ensure that the sequence $\eta_k$ satisfies the conditions $r_{k-1} \ge r_{k}$ and $s_{k-1} \ge s_k$, the sequence of $\{r_{k}, s_{k}\}$ is non-increasing. Starting with the condition $r_{k-1} \ge r_{k}$, we have
\begin{equation}
\frac{1}{\eta_{k-1}}\sqrt{\mathsf{R}(\mathbf{x}^{k-1})} \ge \frac{1}{\eta_k}\sqrt{\mathsf{R}(\mathbf{x}^{k})}.\nonumber
\end{equation}
Thus, the value of $\eta_k$ must satisfy:
\begin{equation}
\eta_k \ge \eta_{k-1} \cdot \sqrt{\frac{\mathsf{R}(\mathbf{x}^{k})}{\mathsf{R}(\mathbf{x}^{k-1})}}.\nonumber
\end{equation}
Similarly, beginning with the condition $s_{k-1} \ge s_k$, we have
\begin{equation}
\frac{\mu\mathsf{R}(\bar{\mathbf{x}}^{k-1})}{\eta_{k-1}\sqrt{\mathsf{R}(\mathbf{x}^{k-1})}} \ge \frac{\mu\mathsf{R}(\bar{\mathbf{x}}^{k})}{\eta_k\sqrt{\mathsf{R}(\mathbf{x}^{k})}}.\nonumber
\end{equation}
Therefore, $\eta_k$ must also satisfy:
\begin{equation}
\eta_k \ge \eta_{k-1} \cdot \frac{\mathsf{R}(\bar{\mathbf{x}}^{k}) \sqrt{\mathsf{R}(\mathbf{x}^{k-1})}}{\mathsf{R}(\bar{\mathbf{x}}^{k-1}) \sqrt{\mathsf{R}(\mathbf{x}^{k})}}.\nonumber
\end{equation}
To satisfy both $r_{k-1} \ge r_{k}$ and $s_{k-1} \ge s_k$, the parameter $\eta_k$ must be chosen to satisfy the more restrictive of the two conditions. Therefore, the appropriate choice for $\eta_k$ is given by:
\begin{equation}
\eta_k \ge \max \left\{\eta_{k-1} \cdot \sqrt{\frac{\mathsf{R}(\mathbf{x}^{k})}{\mathsf{R}(\mathbf{x}^{k-1})}},\quad \eta_{k-1} \cdot \frac{\mathsf{R}(\bar{\mathbf{x}}^{k}) \sqrt{\mathsf{R}(\mathbf{x}^{k-1})}}{\mathsf{R}(\bar{\mathbf{x}}^{k-1}) \sqrt{\mathsf{R}(\mathbf{x}^{k})}} \right\}.\label{sec3-add5}
\end{equation}
For any iteration $k\ge0$, we define a difference matrix as follows 
\begin{eqnarray}
\mathbf{D}_{k}=\mathbf{H}_{k}-\mathbf{H}_{k+1}=\left(\begin{array}{cc}
         (r_{k}-r_{k+1})\mathbf{I}_{n} &  \mathbf{0}\\[0.1cm]
         \mathbf{0}  & (s_{k}-s_{k+1})\mathbf{I}_{p} \end{array} \right).\nonumber 
\end{eqnarray}
Since $r_{k} \ge r_{k+1}$ and $s_{k} \ge s_{k+1}$ hold, $\mathbf{D}_{k}$ is a semi positive definite matrix. The initial extended matrix can be reformulated as $\mathbf{H}_{0} = \frac{1}{\eta_0}\mathsf{H}_{0}$, given by:
\begin{eqnarray} 
 \mathbf{H}_{0}  =  \left(\begin{array}{cc}
       \frac{1}{\eta_0}\sqrt{\mathsf{R}(\mathbf{x}^{0})}\mathbf{I}_{n} &   \mathbf{0}\\[0.1cm]
         \mathbf{0}   &\frac{\mu\mathsf{R}(\bar{\mathbf{x}}^{0})}{\eta_0\sqrt{\mathsf{R}(\mathbf{x}^{0})}} \mathbf{I}_{p} \end{array} \right),\quad
  \mathsf{H}_{0}  =  \left(\begin{array}{cc}
       \sqrt{\mathsf{R}(\mathbf{x}^{0})}\mathbf{I}_{n} &   \mathbf{0}\\[0.1cm]
         \mathbf{0}   &\frac{\mu\mathsf{R}(\bar{\mathbf{x}}^{0})}{\sqrt{\mathsf{R}(\mathbf{x}^{0})}} \mathbf{I}_{p} \end{array} \right).\nonumber 
\end{eqnarray}
From the structure of these matrices, it is evident that $\mathsf{H}_{0}$ is independent of the scaling factor $\eta_k$, relying solely on the initial value of $\mathsf{R}(\mathbf{x}^{0})$.

\subsection{Sequence Convergence Analysis}
Based on the previous preparation work, this part demonstrates the sequence convergence of the proposed method. First, the convergence condition is derived, followed by an analysis of the sequence convergence characteristics.
\begin{lemma}\label{sec3-lemma2}
Let $\{\mathbf{w} ^{k},\bar{\mathbf{w} }^{k},\mathbf{w} ^{k+1}\}$ be the sequences generated by the \textsf{Spice} method. When we use equation (\ref{sec3-eq4}) to update the values of $r_{k}$ and $s_{k}$, the predictive matrix $\mathbf{Q}_{k}$ and corrective matrix $\mathbf{M}_{k}$ are upper triangular and satisfy 
\begin{equation}
\mathbf{H}_{k}=\mathbf{Q}_{k}\mathbf{M}_{k}^{-1}\succ0 \quad {\rm and}\quad  \mathbf{G}_{k}=\mathbf{Q}_{k}^{\top}+\mathbf{Q}_{k}- \mathbf{M}_{k}^{\top}\mathbf{H}_{k}\mathbf{M}_{k}\succ0,\label{sec3-eq5}
\end{equation}
 and the following variational inequality holds
\begin{equation}\begin{aligned}
  \rho \left[f(\mathbf{x})- f(\bar{\mathbf{x}}^{k})\right]+(\mathbf{w} -\bar{\mathbf{w} }^{k})^{\top}\frac{1}{\eta_k}\boldsymbol{\Gamma}(\bar{\mathbf{w} }^{k})\ge\frac{1}{2}\|\bar{\mathbf{w}}^{k}-\mathbf{w} ^{k}\|_{\mathbf{G}_{k}}^{2}+\frac{1}{2 }\left(\|\mathbf{w} -\mathbf{w} ^{k+1}\|_{\mathbf{H}_{k}}^{2}-\|\mathbf{w} -\mathbf{w}^{k}\|_{\mathbf{H}_{k}}^{2}\right),\quad \forall \ \mathbf{w} \in& \Omega. \end{aligned}\label{sec3-eq6}
\end{equation}
\end{lemma}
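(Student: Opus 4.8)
The plan is to establish the two matrix conditions in \eqref{sec3-eq5} directly from the block structures already computed in the subsection, and then to convert the prediction inequality \eqref{sec3-eq2} into \eqref{sec3-eq6} by substituting the correction step \eqref{sec3-eq3} and applying a single cross-term identity for the symmetric matrix $\mathbf{H}_k$.

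First I would verify the positive-definiteness claims. Since $\mathbf{H}_k = \mathbf{Q}_k\mathbf{M}_k^{-1}$ was shown to be the diagonal matrix with blocks $r_k\mathbf{I}_n$ and $s_k\mathbf{I}_p$, and $r_k, s_k > 0$, it is immediately symmetric positive definite. For $\mathbf{G}_k$, the displayed block computation yields the block-diagonal form with blocks $r_k\mathbf{I}_n$ and $s_k\mathbf{I}_p - \frac{1}{\eta_k^2 r_k}\mathcal{D}\Phi(\bar{\mathbf{x}}^k)\mathcal{D}\Phi(\bar{\mathbf{x}}^k)^\top$, so by the Schur complement it is positive definite exactly when $r_k s_k > \frac{1}{\eta_k^2}\|\mathcal{D}\Phi(\bar{\mathbf{x}}^k)\|^2$. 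Substituting the update rule \eqref{sec3-eq4} gives $r_k s_k = \mu\,\mathsf{R}(\bar{\mathbf{x}}^k)/\eta_k^2$, whereas the right-hand side equals $\mathsf{R}(\bar{\mathbf{x}}^k)/\eta_k^2$, so the condition collapses to $\mu > 1$, which holds by assumption.

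Next I would reduce \eqref{sec3-eq2} to the target form. Moving the $\mathbf{Q}_k$-term to the right of \eqref{sec3-eq2} gives
\[
\rho\left[f(\mathbf{x}) - f(\bar{\mathbf{x}}^k)\right] + (\mathbf{w} - \bar{\mathbf{w}}^k)^\top \tfrac{1}{\eta_k}\boldsymbol{\Gamma}(\bar{\mathbf{w}}^k) \ge (\mathbf{w} - \bar{\mathbf{w}}^k)^\top \mathbf{Q}_k (\mathbf{w}^k - \bar{\mathbf{w}}^k).
\]
Since $\mathbf{H}_k\mathbf{M}_k = \mathbf{Q}_k$ and \eqref{sec3-eq3} gives $\mathbf{w}^k - \mathbf{w}^{k+1} = \mathbf{M}_k(\mathbf{w}^k - \bar{\mathbf{w}}^k)$, the right-hand side equals $(\mathbf{w} - \bar{\mathbf{w}}^k)^\top \mathbf{H}_k(\mathbf{w}^k - \mathbf{w}^{k+1})$. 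I then apply the cross-term identity valid for any symmetric $\mathbf{H}_k$,
\[
(\mathbf{a} - \mathbf{b})^\top \mathbf{H}_k(\mathbf{c} - \mathbf{d}) = \tfrac{1}{2}\left(\|\mathbf{a} - \mathbf{d}\|_{\mathbf{H}_k}^2 - \|\mathbf{a} - \mathbf{c}\|_{\mathbf{H}_k}^2\right) + \tfrac{1}{2}\left(\|\mathbf{c} - \mathbf{b}\|_{\mathbf{H}_k}^2 - \|\mathbf{d} - \mathbf{b}\|_{\mathbf{H}_k}^2\right),
\]
with $\mathbf{a}=\mathbf{w}$, $\mathbf{b}=\bar{\mathbf{w}}^k$, $\mathbf{c}=\mathbf{w}^k$, $\mathbf{d}=\mathbf{w}^{k+1}$. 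This produces the telescoping pair $\tfrac{1}{2}(\|\mathbf{w} - \mathbf{w}^{k+1}\|_{\mathbf{H}_k}^2 - \|\mathbf{w} - \mathbf{w}^k\|_{\mathbf{H}_k}^2)$ together with the residual $\tfrac{1}{2}(\|\mathbf{w}^k - \bar{\mathbf{w}}^k\|_{\mathbf{H}_k}^2 - \|\mathbf{w}^{k+1} - \bar{\mathbf{w}}^k\|_{\mathbf{H}_k}^2)$.

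The main obstacle is showing the residual equals $\tfrac{1}{2}\|\bar{\mathbf{w}}^k - \mathbf{w}^k\|_{\mathbf{G}_k}^2$. Writing $\mathbf{u} = \mathbf{w}^k - \bar{\mathbf{w}}^k$, the correction step gives $\mathbf{w}^{k+1} - \bar{\mathbf{w}}^k = (\mathbf{I} - \mathbf{M}_k)\mathbf{u}$, so the residual becomes $\tfrac{1}{2}\mathbf{u}^\top[\mathbf{H}_k - (\mathbf{I} - \mathbf{M}_k)^\top\mathbf{H}_k(\mathbf{I} - \mathbf{M}_k)]\mathbf{u}$. Expanding the bracket leaves $\mathbf{H}_k\mathbf{M}_k + \mathbf{M}_k^\top\mathbf{H}_k - \mathbf{M}_k^\top\mathbf{H}_k\mathbf{M}_k$, and invoking $\mathbf{H}_k\mathbf{M}_k = \mathbf{Q}_k$ together with the symmetry of $\mathbf{H}_k$ (so that $\mathbf{M}_k^\top\mathbf{H}_k = (\mathbf{H}_k\mathbf{M}_k)^\top = \mathbf{Q}_k^\top$) collapses it exactly to $\mathbf{Q}_k^\top + \mathbf{Q}_k - \mathbf{M}_k^\top\mathbf{H}_k\mathbf{M}_k = \mathbf{G}_k$. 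Combining this with the telescoping pair yields \eqref{sec3-eq6}. The delicate point throughout is that $\mathbf{Q}_k$ and $\mathbf{M}_k$ are non-symmetric, so the symmetry of $\mathbf{H}_k$ must be used explicitly when replacing $\mathbf{M}_k^\top\mathbf{H}_k$ by $\mathbf{Q}_k^\top$; otherwise the residual would not reduce to the clean $\mathbf{G}_k$-norm.
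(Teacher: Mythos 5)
Your proposal is correct and follows essentially the same route as the paper: move the $\mathbf{Q}_k$-term to the right of \eqref{sec3-eq2}, rewrite it as $(\mathbf{w}-\bar{\mathbf{w}}^k)^\top\mathbf{H}_k(\mathbf{w}^k-\mathbf{w}^{k+1})$ via $\mathbf{Q}_k=\mathbf{H}_k\mathbf{M}_k$ and the correction step, apply the four-point identity \eqref{sec3-eq8}, and collapse the residual to the $\mathbf{G}_k$-norm (the paper does this last step through the quadratic-form symmetrization of Lemma \ref{sec3-lemma1}, whereas you do it at the matrix level via $\mathbf{M}_k^\top\mathbf{H}_k=\mathbf{Q}_k^\top$, which is equivalent). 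Your explicit verification that the update rule \eqref{sec3-eq4} reduces the Schur-complement condition to $\mu>1$ is a correct detail the paper only states in the surrounding text.
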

\begin{proof}
Referring to \eqref{sec3-eq2} and \eqref{sec3-eq5}, the proximal term can be  written as
\begin{equation}\begin{aligned}
(\mathbf{w} -\bar{\mathbf{w} }^{k})^{\top}\mathbf{Q}_{k}(\mathbf{w} ^{k}-\bar{\mathbf{w} }^{k})=(\mathbf{w} -\bar{\mathbf{w} }^{k})^{\top}\mathbf{H}_{k}\mathbf{M}_{k}(\mathbf{w} ^{k}-\bar{\mathbf{w} }^{k})=(\mathbf{w} -\bar{\mathbf{w} }^{k})^{\top}\mathbf{H}_{k}(\mathbf{w} ^{k}-\mathbf{w} ^{k+1}).\label{sec3-eq7}
\end{aligned}\end{equation}
To further simplify it, the following fact can be used.
\begin{equation}
(\mathbf{w}-\mathbf{w}_1)^{\top}\mathbf{H}_{k}(\mathbf{w}_2-\mathbf{w}_{3})=\frac{1}{2}\left( \Vert\mathbf{w}-\mathbf{w}_{3}\Vert_{\mathbf{H}_{k}}^{2}-\Vert\mathbf{w}-\mathbf{w}_2\Vert_{\mathbf{H}_{k}}^{2}\right)+\frac{1}{2}\left( \Vert\mathbf{w}_1-\mathbf{w}_2\Vert_{\mathbf{H}_{k}}^{2}-\Vert\mathbf{w}_1-\mathbf{w}_{3}\Vert_{\mathbf{H}_{k}}^{2}\right).\label{sec3-eq8}
\end{equation}
Applying (\ref{sec3-eq8}) to the equation (\ref{sec3-eq7}), we obtain
\begin{equation}
\begin{aligned}
(\mathbf{w} -\bar{\mathbf{w} }^{k})^{\top}\mathbf{H}_{k}(\mathbf{w} ^{k}-\mathbf{w} ^{k+1})=&\frac{1}{2}\left( \|\mathbf{w} -\mathbf{w} ^{k+1}\|_{\mathbf{H}_{k}}^{2}-\|\mathbf{w} -\mathbf{w} ^{k}\|_{\mathbf{H}_{k}}^{2}\right)+\frac{1}{2}\left( \|\bar{\mathbf{w}}^{k}-\mathbf{w} ^{k}\|_{\mathbf{H}_{k}}^{2}-\|\bar{\mathbf{w} }^{k}-\mathbf{w} ^{k+1}\|_{\mathbf{H}_{k}}^{2}\right).\nonumber
\end{aligned}
\end{equation}
On the other hand, by Lemma \ref{sec3-lemma1}, the following equation holds.
\begin{eqnarray}
\lefteqn{\|\bar{\mathbf{w} }^{k}-\mathbf{w} ^{k}\|_{\mathbf{H}_{k}}^{2}-\|\bar{\mathbf{w} }^{k}-\mathbf{w} ^{k+1}\|_{\mathbf{H}_{k}}^{2}}\nonumber\\
&=&\|\bar{\mathbf{w} }^{k}-\mathbf{w}^{k}\|_{\mathbf{H}_{k}}^{2}-\|(\bar{\mathbf{w} }^{k}-\mathbf{w} ^{k})-(\mathbf{w} ^{k+1}-\mathbf{w} ^{k})\|_{\mathbf{H}_{k}}^{2}\nonumber\\
&=&\|\bar{\mathbf{w} }^{k}-\mathbf{w} ^{k}\|_{\mathbf{H}_{k}}^{2}-\|(\bar{\mathbf{w} }^{k}-\mathbf{w} ^{k})- \mathbf{M}_{k}(\bar{\mathbf{w} }^{k}-\mathbf{w} ^{k})\|_{\mathbf{H}_{k}}^{2}\nonumber\\
&=&2 (\bar{\mathbf{w} }^{k}-\mathbf{w} ^{k})^{\top}\mathbf{H}_{k}\mathbf{M}_{k}(\bar{\mathbf{w} }^{k}-\mathbf{w} ^{k})- (\bar{\mathbf{w} }^{k}-\mathbf{w} ^{k})^{\top}\mathbf{M}_{k}^{\top}\mathbf{H}_{k}\mathbf{M}_{k}(\bar{\mathbf{w} }^{k}-\mathbf{w} ^{k})\nonumber\\
&=& (\bar{\mathbf{w} }^{k}-\mathbf{w} ^{k})^{\top}(\mathbf{Q}_{k}^{\top}+\mathbf{Q}_{k}- \mathbf{M}_{k}^{\top}\mathbf{H}_{k}\mathbf{M}_{k})(\bar{\mathbf{w} }^{k}-\mathbf{w} ^{k})\nonumber\\
&=& \|\bar{\mathbf{w} }^{k}-\mathbf{w} ^{k}\|_{\mathbf{G}_{k}}^{2}.\nonumber
\end{eqnarray}
Combining with (\ref{sec3-eq7}), the proximal term is equal to 
\begin{equation}
(\mathbf{w} -\bar{\mathbf{w} }^{k})^{\top}\mathbf{Q}_{k}(\mathbf{w} ^{k}-\bar{\mathbf{w} }^{k})=\frac{1}{2 }\left( \|\mathbf{w} -\mathbf{w} ^{k+1}\|_{\mathbf{H}_{k}}^{2}-\|\mathbf{w} -\mathbf{w} ^{k}\|_{\mathbf{H}_{k}}^{2}\right)+\frac{1}{2}\|\bar{\mathbf{w} }^{k}-\mathbf{w} ^{k}\|_{\mathbf{G}_{k}}^{2},\label{sec3-eq9}
\end{equation}
By replacing the prediction term of (\ref{sec3-eq2}) with (\ref{sec3-eq9}), this lemma is proved.
\end{proof}

\begin{theorem}\label{sec3-theorem1}
Let $\{\mathbf{w} ^{k},\bar{\mathbf{w} }^{k},\mathbf{w} ^{k+1}\}$ be the sequences generated by the \textsf{Spice} method. For the predictive matrix $\mathbf{Q}_{k}$, if there is a corrective matrix $\mathbf{M}_{k}$ that satisfies the convergence condition (\ref{sec3-eq5}), then the sequences satisfy the following inequality
\begin{equation}
\Vert\mathbf{w} ^{*}-\mathbf{w} ^{k}\Vert_{\mathbf{H}_{k}}^{2}\ge \Vert\mathbf{w} ^{*}-\mathbf{w} ^{k+1}\Vert_{\mathbf{H}_{k}}^{2}+ \Vert\mathbf{w} ^{k}-\bar{\mathbf{w} }^{k}\Vert_{\mathbf{G}_{k}}^{2}, \quad\ \mathbf{w} ^{*}\in \Omega^{*}, \label{sec3-eq10}
\end{equation}
where $\Omega^{*}$ is the set of optimal solutions.
\end{theorem}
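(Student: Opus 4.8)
The plan is to set the free variable $\mathbf{w} = \mathbf{w}^{*}$ (an optimal solution) in the variational inequality \eqref{sec3-eq6} furnished by Lemma \ref{sec3-lemma2}, and then show that the resulting left-hand side is non-positive. Substituting $\mathbf{w} = \mathbf{w}^{*}$ yields
\begin{equation}
\rho\left[f(\mathbf{x}^{*})-f(\bar{\mathbf{x}}^{k})\right]+(\mathbf{w}^{*}-\bar{\mathbf{w}}^{k})^{\top}\tfrac{1}{\eta_k}\boldsymbol{\Gamma}(\bar{\mathbf{w}}^{k})\ge\tfrac{1}{2}\|\bar{\mathbf{w}}^{k}-\mathbf{w}^{k}\|_{\mathbf{G}_{k}}^{2}+\tfrac{1}{2}\left(\|\mathbf{w}^{*}-\mathbf{w}^{k+1}\|_{\mathbf{H}_{k}}^{2}-\|\mathbf{w}^{*}-\mathbf{w}^{k}\|_{\mathbf{H}_{k}}^{2}\right).\nonumber
\end{equation}
If I can establish that the left-hand side here is $\le 0$, then rearranging the terms and multiplying through by $2$ delivers \eqref{sec3-eq10} immediately, using the symmetry $\|\bar{\mathbf{w}}^{k}-\mathbf{w}^{k}\|_{\mathbf{G}_{k}}^{2}=\|\mathbf{w}^{k}-\bar{\mathbf{w}}^{k}\|_{\mathbf{G}_{k}}^{2}$.

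The bound on the objective-difference term comes from the optimality characterization. Applying Lemma \ref{sec2-lemma3} (the scaled variational inequality \eqref{sec2-eq4}) with $\eta=\eta_k$ and the test point $\mathbf{w}=\bar{\mathbf{w}}^{k}\in\Omega$ gives $\rho[f(\bar{\mathbf{x}}^{k})-f(\mathbf{x}^{*})]+(\bar{\mathbf{w}}^{k}-\mathbf{w}^{*})^{\top}\tfrac{1}{\eta_k}\boldsymbol{\Gamma}(\mathbf{w}^{*})\ge0$, equivalently $\rho[f(\mathbf{x}^{*})-f(\bar{\mathbf{x}}^{k})]\le(\bar{\mathbf{w}}^{k}-\mathbf{w}^{*})^{\top}\tfrac{1}{\eta_k}\boldsymbol{\Gamma}(\mathbf{w}^{*})$. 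Substituting this into the left-hand side above and collecting the two $\boldsymbol{\Gamma}$ contributions, the left-hand side is bounded above by
\begin{equation}
\tfrac{1}{\eta_k}(\bar{\mathbf{w}}^{k}-\mathbf{w}^{*})^{\top}\left[\boldsymbol{\Gamma}(\mathbf{w}^{*})-\boldsymbol{\Gamma}(\bar{\mathbf{w}}^{k})\right]=-\tfrac{1}{\eta_k}(\bar{\mathbf{w}}^{k}-\mathbf{w}^{*})^{\top}\left[\boldsymbol{\Gamma}(\bar{\mathbf{w}}^{k})-\boldsymbol{\Gamma}(\mathbf{w}^{*})\right].\nonumber
\end{equation}
Since both $\bar{\mathbf{w}}^{k}$ and $\mathbf{w}^{*}$ lie in $\Omega$, the monotonicity of $\boldsymbol{\Gamma}$ from Lemma \ref{sec2-lemma2}, inequality \eqref{sec2-eq3}, forces the inner product $(\bar{\mathbf{w}}^{k}-\mathbf{w}^{*})^{\top}[\boldsymbol{\Gamma}(\bar{\mathbf{w}}^{k})-\boldsymbol{\Gamma}(\mathbf{w}^{*})]\ge0$, and because $\eta_k>0$ the displayed expression is $\le 0$. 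This closes the argument.

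The conceptual heart of the proof is the telescoping-friendly identity \eqref{sec3-eq6} already packaged by Lemma \ref{sec3-lemma2}; once that is in hand, the remaining work is the clean cancellation of the function-value terms against the $\boldsymbol{\Gamma}(\mathbf{w}^{*})$ term. I expect the only delicate point to be bookkeeping the signs when folding the optimality inequality into \eqref{sec3-eq6}, specifically verifying that the two gradient contributions combine into a single monotonicity pairing of the form $(\bar{\mathbf{w}}^{k}-\mathbf{w}^{*})^{\top}[\boldsymbol{\Gamma}(\bar{\mathbf{w}}^{k})-\boldsymbol{\Gamma}(\mathbf{w}^{*})]$ rather than its negative; the scaling factors $\rho$ and $\tfrac{1}{\eta_k}$ must be carried consistently throughout so that the common factor $\tfrac{1}{\eta_k}>0$ preserves the inequality direction. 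Positive definiteness of $\mathbf{H}_{k}$ and $\mathbf{G}_{k}$, guaranteed by the convergence condition \eqref{sec3-eq5}, is what makes the $\mathbf{H}_{k}$- and $\mathbf{G}_{k}$-weighted norms genuine squared distances, so no further structural hypotheses are needed.
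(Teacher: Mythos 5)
Your proposal is correct and follows essentially the same route as the paper: substitute $\mathbf{w}=\mathbf{w}^{*}$ into \eqref{sec3-eq6}, then combine the monotonicity of $\boldsymbol{\Gamma}$ (Lemma \ref{sec2-lemma2}) with the scaled optimality inequality \eqref{sec2-eq4} to conclude the residual bracket is sign-definite. The only difference is cosmetic — you apply the optimality inequality first and monotonicity second, while the paper does the reverse — but the same two lemmas are added to the same base inequality, so the arguments coincide.
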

\begin{proof} Setting $\mathbf{w} =\mathbf{w} ^{*}$, the inequality of \eqref{sec3-eq6} can be reformulated as
\begin{equation}\begin{aligned}
\|\mathbf{w} ^{*}-\mathbf{w} ^{k}\|_{\mathbf{H}_{k}}^{2}-\|\mathbf{w} ^{*}-\mathbf{w} ^{k+1}\|_{\mathbf{H}_{k}}^{2}&- \Vert\mathbf{w} ^{k}-\bar{\mathbf{w} }^{k}\Vert_{\mathbf{G}_{k}}^{2}\ge 2 \left \{\rho [ f(\bar{\mathbf{x}}^{k})- f(\mathbf{x}^{*})]+(\bar{\mathbf{w} }^{k}-\mathbf{w} ^{*})^{\top}\frac{1}{\eta_k}\boldsymbol{\Gamma}(\bar{\mathbf{w} }^{k})\right\}.\nonumber
\end{aligned}\end{equation}
By Lemme \ref{sec2-lemma2}, the monotone operator satisfies $(\bar{\mathbf{w} }^{k}-\mathbf{w} ^{*})^{\top}\boldsymbol{\Gamma}(\bar{\mathbf{w} }^{k})\ge (\bar{\mathbf{w} }^{k}-\mathbf{w} ^{*})^{\top}\boldsymbol{\Gamma}(\mathbf{w} ^{*}).$ Then we  have
\begin{equation}\begin{aligned}
\Vert\mathbf{w} ^{*}-\mathbf{w} ^{k}\Vert_{\mathbf{H}_{k}}^{2}-\Vert\mathbf{w} ^{*}-\mathbf{w} ^{k+1}\Vert_{\mathbf{H}_{k}}^{2}-& \Vert\mathbf{w} ^{k}-\bar{\mathbf{w} }^{k}\Vert_{\mathbf{G}_{k}}^{2}\ge 2 \left \{\rho [ f(\bar{\mathbf{x}}^{k})- f(\mathbf{x}^{*})]+(\bar{\mathbf{w} }^{k}-\mathbf{w} ^{*})^{\top}\frac{1}{\eta_k}\boldsymbol{\Gamma}(\mathbf{w} ^{*})\right\}\ge0.\nonumber
\end{aligned}\end{equation}
Thus, this theorem holds.
\end{proof}

\begin{remark}
 The above theorem establishes a fundamental inequality for the sequence $\{\mathbf{w}^k, \bar{\mathbf{w}}^k, \mathbf{w}^{k+1}\}$ generated by the \textsf{Spice} method. The inequality shows that the norm of the difference between the iterate $\mathbf{w}^{k+1}$ and any optimal solution $\mathbf{w}^*$, measured in the $\mathbf{H}_k$-norm, decreases from one iteration to the next. Specifically, this decrease is driven by the residual term $\Vert \mathbf{w}^k - \bar{\mathbf{w}}^k \Vert_{\mathbf{G}_k}^2$. This term quantifies the gap between the predictor step $\mathbf{w}^k$ and the corrected point $\bar{\mathbf{w}}^k$, ensuring that the iterates progressively approach the set of optimal solutions $\Omega^*$ under the given convergence condition. Hence, the theorem guarantees the convergence of the \textsf{Spice} method as long as the matrix $\mathbf{M}_k$ satisfies condition (\ref{sec3-eq5}).
\end{remark}

\begin{lemma} \label{sec3-lemma3}
Let sequence $\{\mathbf{w}^k\}$ be generated by the \textsf{Spice} method. If parameters $\{r_{k},s_{k},\eta_{k}\}$ satify (\ref{sec3-eq4}) and (\ref{sec3-add5}), for any $k\ge1$, the sequence $\mathbf{H}_{k}$ is diagonal and monotonically non-increasing. We have $\mathbf{H}_{k} \preceq \mathbf{H}_{k-1}$ and 
\begin{equation}
\|\mathbf{w}^{k+1} - \mathbf{w}^*\|_{\mathbf{H}_{k+1}}^2 \leq  \|\mathbf{w}^{0} - \mathbf{w}^*\|_{\mathbf{H}_0}^2.\label{sec3-eq11}
\end{equation}
\end{lemma}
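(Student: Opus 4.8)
The plan is to split the argument into two parts, corresponding to the two assertions. The first assertion, that $\mathbf{H}_k$ is diagonal and satisfies $\mathbf{H}_k \preceq \mathbf{H}_{k-1}$, follows almost immediately from the explicit structure already derived. The matrix $\mathbf{H}_k = \mathbf{Q}_k \mathbf{M}_k^{-1}$ was computed to be the block-diagonal matrix $\mathrm{diag}(r_k \mathbf{I}_n,\, s_k \mathbf{I}_p)$, so diagonality is immediate. For monotonicity, I would recall that the difference matrix $\mathbf{D}_k = \mathbf{H}_k - \mathbf{H}_{k+1} = \mathrm{diag}((r_k - r_{k+1})\mathbf{I}_n,\, (s_k - s_{k+1})\mathbf{I}_p)$ is positive semidefinite precisely because the choice of $\eta_k$ in \eqref{sec3-add5} was designed to force $r_k \le r_{k-1}$ and $s_k \le s_{k-1}$. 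Thus $\mathbf{H}_k \preceq \mathbf{H}_{k-1}$ holds for all $k \ge 1$.

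For the second assertion, the strategy is to combine the one-step contraction from Theorem \ref{sec3-theorem1} with the monotonicity just established, then telescope. First I would invoke \eqref{sec3-eq10}: since $\mathbf{G}_k \succ 0$ by the convergence condition \eqref{sec3-eq5}, the residual term satisfies $\|\mathbf{w}^k - \bar{\mathbf{w}}^k\|_{\mathbf{G}_k}^2 \ge 0$, and discarding it yields
\begin{equation}
\|\mathbf{w}^* - \mathbf{w}^{k+1}\|_{\mathbf{H}_k}^2 \le \|\mathbf{w}^* - \mathbf{w}^k\|_{\mathbf{H}_k}^2. \nonumber
\end{equation}
The subtlety is that the left-hand side is measured in the $\mathbf{H}_k$-norm, whereas the quantity we ultimately want to control is the $\mathbf{H}_{k+1}$-norm of $\mathbf{w}^{k+1}-\mathbf{w}^*$. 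This is exactly where the monotonicity is needed: because $\mathbf{H}_{k+1} \preceq \mathbf{H}_k$, for the fixed vector $\mathbf{v} = \mathbf{w}^{k+1} - \mathbf{w}^*$ one has $\mathbf{v}^\top \mathbf{H}_{k+1}\mathbf{v} \le \mathbf{v}^\top \mathbf{H}_k \mathbf{v}$, i.e. $\|\mathbf{w}^{k+1} - \mathbf{w}^*\|_{\mathbf{H}_{k+1}}^2 \le \|\mathbf{w}^{k+1} - \mathbf{w}^*\|_{\mathbf{H}_k}^2$.

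Chaining these two inequalities produces the clean one-step recursion $\|\mathbf{w}^{k+1} - \mathbf{w}^*\|_{\mathbf{H}_{k+1}}^2 \le \|\mathbf{w}^{k} - \mathbf{w}^*\|_{\mathbf{H}_{k}}^2$. Writing $a_k := \|\mathbf{w}^k - \mathbf{w}^*\|_{\mathbf{H}_k}^2$, this reads $a_{k+1} \le a_k$, so a trivial induction (or telescoping down the index) gives $a_{k+1} \le a_k \le \cdots \le a_0$, which is precisely the claimed bound \eqref{sec3-eq11}. The main obstacle, and the only genuinely non-routine point, is recognizing that one cannot telescope the Theorem \ref{sec3-theorem1} estimate directly because the norm changes at each step; the monotonicity $\mathbf{H}_{k+1}\preceq\mathbf{H}_k$ is the device that bridges the $\mathbf{H}_k$-norm appearing on the right of the contraction with the $\mathbf{H}_{k+1}$-norm appearing on the left, and it must be applied in the correct direction (upgrading the index on the left iterate while weakening the norm) for the recursion to close.
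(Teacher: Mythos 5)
Your proposal is correct and follows essentially the same route as the paper: both use the one-step estimate \eqref{sec3-eq10} from Theorem \ref{sec3-theorem1}, discard the nonnegative $\mathbf{G}_k$-residual, bridge the change of norm via $\mathbf{H}_{k+1}\preceq\mathbf{H}_k$ (which the paper justifies through the semidefinite difference matrix $\mathbf{D}_k$ and the choice of $\eta_k$ in \eqref{sec3-add5}, exactly as you do), and then telescope down to $k=0$. Your write-up is simply a more explicit version of the paper's four-line chain of inequalities.
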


\begin{proof} 
According to the above condition, we have:
\begin{align}
\|\mathbf{w}^{k+1} - \mathbf{w}^*\|_{\mathbf{H}_{k+1}}^2 
&\leq \|\mathbf{w}^{k+1} - \mathbf{w}^*\|_{\mathbf{H}_k}^2 \nonumber\\
&\leq   \|\mathbf{w}^k - \mathbf{w}^*\|_{\mathbf{H}_k}^2 - \|\mathbf{w}^k - \bar{\mathbf{w}}^k\|_{\mathbf{G}_k}^2  \nonumber\\
&\leq \|\mathbf{w}^k - \mathbf{w}^*\|_{\mathbf{H}_k}^2 \nonumber\\
&\leq  \|\mathbf{w}^{0} - \mathbf{w}^*\|_{\mathbf{H}_0}^2 \nonumber
\end{align}
Thus, this lemma holds.
\end{proof}

\begin{theorem}\label{sec3-theorem2}
Let $\{\mathbf{w}^k,\bar{\mathbf{w}}^k\}$ the sequences generated by the \textsf{Spice} method. For the given optimal solution $\mathbf{w}^{*}$, the following limit equations hold
\begin{equation}
\lim_{k \to \infty} \|\mathbf{w}^k - \bar{\mathbf{w}}^k\|^2 = 0 \quad {\rm and}\quad \lim_{k \to \infty}\mathbf{w}^{k}=\mathbf{w}^{*}.\label{sec3-eq12}
\end{equation}
\end{theorem}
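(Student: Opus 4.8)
The plan is to combine the one-step contraction of Theorem~\ref{sec3-theorem1} with the monotonicity of $\{\mathbf{H}_k\}$ from Lemma~\ref{sec3-lemma3} to obtain summability of the correction residuals, and then to promote this into the two stated limits via a standard ``bounded sequence with a unique optimal limit point'' argument. First I would fix an arbitrary $\mathbf{w}^*\in\Omega^*$ and set $a_k:=\|\mathbf{w}^k-\mathbf{w}^*\|_{\mathbf{H}_k}^2$. Chaining $\mathbf{H}_{k+1}\preceq\mathbf{H}_k$ (Lemma~\ref{sec3-lemma3}) with \eqref{sec3-eq10} gives
\begin{equation}
a_{k+1}\le\|\mathbf{w}^{k+1}-\mathbf{w}^*\|_{\mathbf{H}_k}^2\le a_k-\|\mathbf{w}^k-\bar{\mathbf{w}}^k\|_{\mathbf{G}_k}^2 .\nonumber
\end{equation}
Since $\mathbf{H}_k\succ0$ forces $a_k\ge0$, the sequence $\{a_k\}$ is non-increasing and bounded below, hence convergent; telescoping the displayed inequality yields $\sum_{k=0}^{\infty}\|\mathbf{w}^k-\bar{\mathbf{w}}^k\|_{\mathbf{G}_k}^2\le a_0<\infty$, so $\|\mathbf{w}^k-\bar{\mathbf{w}}^k\|_{\mathbf{G}_k}^2\to0$.

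Next, to pass from the weighted residual to the plain Euclidean limit $\|\mathbf{w}^k-\bar{\mathbf{w}}^k\|^2\to0$, I would use $\|\mathbf{w}^k-\bar{\mathbf{w}}^k\|_{\mathbf{G}_k}^2\ge\lambda_{\min}(\mathbf{G}_k)\,\|\mathbf{w}^k-\bar{\mathbf{w}}^k\|^2$ and establish a uniform lower bound $\lambda_{\min}(\mathbf{G}_k)\ge c>0$ from the explicit block-diagonal form of $\mathbf{G}_k$, the strict condition $r_k s_k\mathbf{I}_n\succ\tfrac{1}{\eta_k^2}\mathcal{D}\Phi(\bar{\mathbf{x}}^k)^\top\mathcal{D}\Phi(\bar{\mathbf{x}}^k)$, and the prescribed choices of $r_k,s_k$ in \eqref{sec3-eq4}. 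The same ingredients, together with a uniform lower bound on $\lambda_{\min}(\mathbf{H}_k)$ and the bound $a_k\le a_0$ from Lemma~\ref{sec3-lemma3}, show that $\{\mathbf{w}^k\}$ is bounded, so it has a convergent subsequence $\mathbf{w}^{k_j}\to\mathbf{w}^\infty=(\mathbf{x}^\infty,\boldsymbol{\lambda}^\infty)$; since $\|\mathbf{w}^k-\bar{\mathbf{w}}^k\|\to0$, also $\bar{\mathbf{w}}^{k_j}\to\mathbf{w}^\infty$.

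Then I would identify $\mathbf{w}^\infty$ as optimal by taking limits in the prediction inequality \eqref{sec3-eq2}. Because $\|\mathbf{w}^k-\bar{\mathbf{w}}^k\|\to0$ while $\mathbf{Q}_k$ stays bounded, the proximal term $\mathbf{Q}_k(\bar{\mathbf{w}}^k-\mathbf{w}^k)$ vanishes along the subsequence; invoking continuity of $f$, $\Phi$, and $\mathcal{D}\Phi$, the limit satisfies $\rho[f(\mathbf{x})-f(\mathbf{x}^\infty)]+(\mathbf{w}-\mathbf{w}^\infty)^\top\tfrac{1}{\eta_\infty}\boldsymbol{\Gamma}(\mathbf{w}^\infty)\ge0$ for all $\mathbf{w}\in\Omega$, which by Lemma~\ref{sec2-lemma3} means $\mathbf{w}^\infty\in\Omega^*$. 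Finally, specializing the monotone sequence $\{a_k\}$ to $\mathbf{w}^*=\mathbf{w}^\infty$, the convergence of $a_k$ together with $a_{k_j}=\|\mathbf{w}^{k_j}-\mathbf{w}^\infty\|_{\mathbf{H}_{k_j}}^2\to0$ forces $a_k\to0$, and the uniform lower bound on $\lambda_{\min}(\mathbf{H}_k)$ upgrades this to $\mathbf{w}^k\to\mathbf{w}^\infty=\mathbf{w}^*$.

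The hard part will be the uniform positivity of $\lambda_{\min}(\mathbf{G}_k)$ and $\lambda_{\min}(\mathbf{H}_k)$: the scaling mechanism is deliberately built to let $\eta_k$ grow (this is precisely what yields the ``free'' rate), which drives $r_k=\tfrac{1}{\eta_k}\sqrt{\mathsf{R}(\mathbf{x}^k)}$ and $s_k$ toward zero and makes both weighting matrices degenerate, so a naive eigenvalue bound fails. I would resolve this either by restricting the convergence statement to the regime in which $\eta_k$ remains bounded, or by exploiting the factorization $\mathbf{H}_0=\tfrac{1}{\eta_0}\mathsf{H}_0$ isolated at the end of Section~3.2, where $\mathsf{H}_0$ is independent of $\eta_k$, so that the $\eta_k$-dependence can be scaled out and the residuals are still forced to zero in the ratio-normalized norm despite $\mathbf{H}_k,\mathbf{G}_k\to0$.
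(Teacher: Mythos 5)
Your proposal follows essentially the same route as the paper: telescoping the contraction \eqref{sec3-eq10} against the monotonicity $\mathbf{H}_{k+1}\preceq\mathbf{H}_k$ to get $\sum_{k}\|\mathbf{w}^k-\bar{\mathbf{w}}^k\|_{\mathbf{G}_k}^2\le\|\mathbf{w}^0-\mathbf{w}^*\|_{\mathbf{H}_0}^2$, passing to $\|\mathbf{w}^k-\bar{\mathbf{w}}^k\|\to0$, and then taking limits in the prediction inequality \eqref{sec3-eq2} to identify the limit point as optimal. Your subsequence/Opial-style finish is in fact more careful than the paper's, which jumps directly from ``$\bar{\mathbf{w}}^k$ satisfies the optimality condition in the limit'' to $\mathbf{w}^k\to\mathbf{w}^*$ without the boundedness and unique-limit-point bookkeeping you supply.

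The difficulty you flag at the end is genuine, and you should know that the paper does not resolve it either: its proof passes from $\|\mathbf{w}^k-\bar{\mathbf{w}}^k\|_{\mathbf{G}_k}^2\to0$ to $\|\mathbf{w}^k-\bar{\mathbf{w}}^k\|^2\to0$ with only the remark that ``$\mathbf{G}_k$ depends on the values of $\{r_k,s_k\}$,'' which is exactly the uniform lower bound on $\lambda_{\min}(\mathbf{G}_k)$ that fails when $\eta_k\to\infty$. Note also that neither of your two proposed repairs closes the gap in full generality: factoring out $\mathbf{G}_k=\tfrac{1}{\eta_k}\mathsf{G}_k$ with $\mathsf{G}_k$ uniformly positive only yields $\sum_k\tfrac{1}{\eta_k}\|\mathbf{w}^k-\bar{\mathbf{w}}^k\|_{\mathsf{G}_k}^2<\infty$, which carries no information if $\sum_k 1/\eta_k<\infty$ (and rapid growth of $\eta_k$ is precisely what the ``free rate'' exploits). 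The honest statement of the theorem therefore needs an additional hypothesis, e.g.\ $\sup_k\eta_k<\infty$ or $\sum_k 1/\eta_k=\infty$ together with lower bounds on $\mathsf{R}(\mathbf{x}^k)$; your first proposed restriction is the correct fix, and your instinct that something must be assumed here is sound.
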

\begin{proof}
 We begin by analyzing the following inequality:
\begin{align}
\|\mathbf{w}^{k+1} - \mathbf{w}^*\|_{\mathbf{H}_{k+1}}^2 
&\leq \|\mathbf{w}^{k+1} - \mathbf{w}^*\|_{\mathbf{H}_k}^2\nonumber \\
&\leq  \|\mathbf{w}^k - \mathbf{w}^*\|_{\mathbf{H}_k}^2 - \|\mathbf{w}^k - \bar{\mathbf{w}}^k\|_{\mathbf{G}_k}^2\nonumber  \\
&\leq \|\mathbf{w}_0 - \mathbf{w}^*\|_{\mathbf{H}_0}^2  - \sum_{t=0}^{k} \|\mathbf{w}_t - \bar{\mathbf{w}}_t\|_{\mathbf{G}_t}^2.\nonumber 
\end{align}
This implies:
\begin{equation}
\sum_{t=0}^{k} \|\mathbf{w}_t - \bar{\mathbf{w}}_t\|_{\mathbf{G}_t}^2 \leq \|\mathbf{w}_0 - \mathbf{w}^*\|_{\mathbf{H}_0}^2-\|\mathbf{w}^{k+1} - \mathbf{w}^*\|_{\mathbf{H}_{k+1}}^2  .\nonumber 
\end{equation}
Summing over all iterations, we find:
\begin{equation}
\sum_{k=0}^{\infty} \|\mathbf{w}^k - \bar{\mathbf{w}}^k\|_{\mathbf{G}_k}^2 \leq \|\mathbf{w}_0 - \mathbf{w}^*\|_{\mathbf{H}_0}^2.\nonumber 
\end{equation}
Therefore, as $k \to \infty$:
\begin{equation}
\lim_{k \to \infty} \|\mathbf{w}^k - \bar{\mathbf{w}}^k\|_{\mathbf{G}_k}^2 = 0. \label{sec3-eq13}
\end{equation}
Since $\mathbf{G}_{k}$ depends on the values of $\{r_{k}, s_{k}\}$, equation (\ref{sec3-eq13}) further implies:
\begin{equation}
\lim_{k \to \infty} \|\mathbf{w}^k - \bar{\mathbf{w}}^k\|^2 = 0.\nonumber 
\end{equation}
Finally, as $k \to \infty$, inequality (\ref{sec3-eq2}) leads to:
\begin{equation}
\bar{\mathbf{w}}^{k}\in \Omega, \quad \rho[f(\mathbf{x}) - f(\bar{\mathbf{x}}^{k})] + (\mathbf{w} - \bar{\mathbf{w}}^{k})^{\top}\frac{1}{\eta_k}\boldsymbol{\Gamma}( \bar{\mathbf{w}}^{k})\ge0, \quad\forall \ \mathbf{w}\in \Omega,\nonumber 
\end{equation}
where $\bar{\mathbf{w}}^{k}$ satisfies the optimality condition. Consequently, we obtain:
\begin{equation}
\lim_{k \to \infty}\mathbf{w}^{k}=\bar{\mathbf{w}}^{k}=\mathbf{w}^{*}.\nonumber 
\end{equation}
Thus, this theorem is proved.
\end{proof}

\begin{remark}
This theorem strengthens the convergence properties of the \textsf{Spice} method. It asserts that the sequence $\{\mathbf{w}^k, \bar{\mathbf{w}}^k\}$ not only satisfies the diminishing residual condition $\|\mathbf{w}^k - \bar{\mathbf{w}}^k\|^2 \to 0$ as $k \to \infty$, but also ensures that the iterates $\mathbf{w}^k$ themselves converge to an optimal solution $\mathbf{w}^*$ in the limit. This implies that the discrepancy between the predictor and corrector steps vanishes asymptotically, guaranteeing that the algorithm progressively stabilizes and converges to an optimal solution. The theorem provides a rigorous foundation for the global convergence of the \textsf{Spice} method.
\end{remark}

\subsection{Ergodic Convergence Analysis}
To analyze the ergodic convergence, we require an alternative characterization of the solution set for the variational inequality (\ref{sec3-eq2}). This characterization is provided in the following lemma:
\begin{lemma}\label{sec3-lemma4}
The solution of the variational inequality (\ref{sec2-eq1}) can be characterized as
\begin{equation}
\Omega^{*}=\underset{\mathbf{w}\in \Omega}{\bigcap}\left\{\bar{\mathbf{w}}\in \Omega: \rho[\vartheta(\mathbf{u})-\vartheta(\bar{\mathbf{u}})]+(\mathbf{w}-\bar{\mathbf{w}})^{\top}\frac{1}{\eta}\boldsymbol{\Gamma}(\mathbf{w})\ge0\right\}.\label{sec3-eq14}
\end{equation}\label{lemma4}
\end{lemma}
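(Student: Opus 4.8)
The plan is to recognize \eqref{sec3-eq14} as the standard equivalence between the two formulations of a monotone variational inequality: the operator $\boldsymbol{\Gamma}$ may be evaluated either at the candidate solution $\bar{\mathbf{w}}$, as in the scaled inequality \eqref{sec2-eq4}, or at the running test point $\mathbf{w}$, as in \eqref{sec3-eq14}. I would prove the set identity by establishing the two inclusions separately, writing $\vartheta(\mathbf{u})=f(\mathbf{x})$ for the primal objective term so that \eqref{sec3-eq14} matches the scaled inequality \eqref{sec2-eq4} up to the location at which $\boldsymbol{\Gamma}$ is evaluated.

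For $\Omega^{*}\subseteq\bigcap_{\mathbf{w}\in\Omega}\{\cdots\}$, I would take any $\bar{\mathbf{w}}\in\Omega^{*}$, so that \eqref{sec2-eq4} holds with $\boldsymbol{\Gamma}$ evaluated at $\bar{\mathbf{w}}$, and then invoke the monotonicity of $\boldsymbol{\Gamma}$ from Lemma \ref{sec2-lemma2} in the form $(\mathbf{w}-\bar{\mathbf{w}})^{\top}[\boldsymbol{\Gamma}(\mathbf{w})-\boldsymbol{\Gamma}(\bar{\mathbf{w}})]\ge0$. Replacing $\boldsymbol{\Gamma}(\bar{\mathbf{w}})$ by the larger quantity $\boldsymbol{\Gamma}(\mathbf{w})$ immediately yields the defining inequality of \eqref{sec3-eq14} for every $\mathbf{w}\in\Omega$, placing $\bar{\mathbf{w}}$ in the intersection. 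This direction is short and relies only on monotonicity.

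The reverse inclusion is the substantive part. Fixing $\bar{\mathbf{w}}$ in the intersection and an arbitrary $\mathbf{w}\in\Omega$, I would introduce the segment $\mathbf{w}_{\alpha}=(1-\alpha)\bar{\mathbf{w}}+\alpha\mathbf{w}$ for $\alpha\in(0,1]$, which lies in $\Omega$ by convexity of $\Omega$. Applying the defining inequality of \eqref{sec3-eq14} at $\mathbf{w}_{\alpha}$, substituting $\mathbf{w}_{\alpha}-\bar{\mathbf{w}}=\alpha(\mathbf{w}-\bar{\mathbf{w}})$, using the convexity bound $\vartheta(\mathbf{u}_{\alpha})-\vartheta(\bar{\mathbf{u}})\le\alpha[\vartheta(\mathbf{u})-\vartheta(\bar{\mathbf{u}})]$, and dividing by $\alpha>0$ gives
\[
\rho[\vartheta(\mathbf{u})-\vartheta(\bar{\mathbf{u}})]+(\mathbf{w}-\bar{\mathbf{w}})^{\top}\tfrac{1}{\eta}\boldsymbol{\Gamma}(\mathbf{w}_{\alpha})\ge0.
\]
Letting $\alpha\to0^{+}$, so that $\mathbf{w}_{\alpha}\to\bar{\mathbf{w}}$, and passing to the limit recovers the scaled variational inequality \eqref{sec2-eq4} at $\bar{\mathbf{w}}$, whence $\bar{\mathbf{w}}\in\Omega^{*}$.

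The hard part will be justifying this limit rigorously: it simultaneously requires the convexity of $\vartheta$ to dominate the objective-value difference along the segment and the continuity of $\boldsymbol{\Gamma}$ to pass $\boldsymbol{\Gamma}(\mathbf{w}_{\alpha})\to\boldsymbol{\Gamma}(\bar{\mathbf{w}})$ through the limit. The continuity is precisely where the standing assumption that the constraint functions $\Phi$ are continuously differentiable becomes indispensable, since $\boldsymbol{\Gamma}$ is built from $\mathcal{D}\Phi$ and $\Phi$; the scaling factors $\rho$ and $1/\eta$ play no role in the argument and merely multiply through, consistent with the fact that scaling preserves the solution set $\Omega^{*}$.
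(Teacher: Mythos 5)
Your argument is correct and is the standard Minty-type proof of this characterization: the forward inclusion via the monotonicity of $\boldsymbol{\Gamma}$ (Lemma \ref{sec2-lemma2}), and the reverse inclusion via the line segment $\mathbf{w}_{\alpha}=(1-\alpha)\bar{\mathbf{w}}+\alpha\mathbf{w}$, convexity of the objective, division by $\alpha$, and continuity of $\boldsymbol{\Gamma}$ as $\alpha\to0^{+}$. The paper itself supplies no proof here, only a citation to an external lemma, and the argument contained in that reference is essentially the one you give, so there is nothing to distinguish; your added remark that the scaling factors $\rho$ and $1/\eta$ merely multiply through is also accurate.
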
\vspace{-0.4cm}
\noindent The proof can be found in \cite{our1} (Lemma 3.3). By Lemma \ref{sec2-lemma2}, for any $ \mathbf{w} \in \Omega $, the monotone operator can be expressed as:
\begin{equation}
(\mathbf{w} - \mathbf{w}^{*})^{\top} \boldsymbol{\Gamma}(\mathbf{w}) \geq (\mathbf{w} - \mathbf{w}^{*})^{\top} \boldsymbol{\Gamma}(\mathbf{w}^{*}).
\label{sec3-eq15}
\end{equation}
Referring to \eqref{sec3-eq15}, the variational inequality (\ref{sec2-eq1}) can be rewritten as
\begin{equation}
\mathbf{w}^{*} \in \Omega, \quad \rho[\vartheta(\mathbf{u}) - \vartheta(\mathbf{u}^{*})] + (\mathbf{w} - \mathbf{w}^{*})^{\top} \frac{1}{\eta} \boldsymbol{\Gamma}(\mathbf{w}) \geq 0, \quad \forall \ \mathbf{w} \in \Omega.
\label{sec3-eq16}
\end{equation}
For a given $\epsilon > 0$, $\bar{\mathbf{w}} \in \Omega$ is considered an approximate solution to the optimal solution $\mathbf{w}^{*}$ if it satisfies the following inequality:
\begin{equation}
\rho[\vartheta(\mathbf{u}) - \vartheta(\bar{\mathbf{u}})] + (\mathbf{w} - \bar{\mathbf{w}})^{\top} \frac{1}{\eta} \boldsymbol{\Gamma}(\mathbf{w}) \geq -\epsilon, \quad \forall \ \mathbf{w} \in \mathsf{N}_{\epsilon}(\bar{\mathbf{w}}),
\end{equation}
where $\mathsf{N}_{\epsilon}(\bar{\mathbf{w}}) = \{\mathbf{w} \mid \|\mathbf{w} - \bar{\mathbf{w}}\| \leq \epsilon\}$ is the $\epsilon$-neighborhood of $\bar{\mathbf{w}}$. For a given $\epsilon > 0$, after $t$ iterations, this provides $\bar{\mathbf{w}} \in \Omega$ such that:
\begin{equation}
\sup_{\mathbf{w} \in \mathsf{N}_{\epsilon}(\bar{\mathbf{w}})} \left\{\rho[\vartheta(\bar{\mathbf{u}}) - \vartheta(\mathbf{u})] + (\bar{\mathbf{w}} - \mathbf{w})^{\top} \frac{1}{\eta} \boldsymbol{\Gamma}(\mathbf{w})\right\} \leq \epsilon.
\label{sec3-eq17}
\end{equation}
Referring to Lemma \ref{sec2-lemma2}, the inequality (\ref{sec3-eq6}) can be reformulated as
\begin{equation}
\rho \left[f(\mathbf{x})- f(\bar{\mathbf{x}}^{k})\right]+(\mathbf{w} -\bar{\mathbf{w} }^{k})^{\top}\frac{1}{\eta_k}\boldsymbol{\Gamma}(\mathbf{w} )+\frac{1}{2 }\|\mathbf{w} -\mathbf{w} ^{k}\|_{\mathbf{H}_{k}}^{2}\ge\frac{1}{2 }\|\mathbf{w} -\mathbf{w} ^{k+1}\|_{\mathbf{H}_{k}}^{2}, \quad\forall \ \mathbf{w} \in \Omega. \label{sec3-eq18}
\end{equation}
Note that the above inequality holds only for  $\mathbf{G}_{k}\succ 0$.

\begin{theorem}\label{sec3-theorem3}
Let $\{\bar{\mathbf{x}}^{k}\}$ be a sequence generated by the \textsf{Spice} method for problem \textsf{P}0 and $\eta_{k}$ satisfy condtion (\ref{sec3-add5}). New ariables $\bar{\mathbf{x}}_{t}$, $\eta_{t}$ and $\bar{\mathbf{w} }_{t}$ are defined as follows
\begin{equation}
\bar{\mathbf{x}}_{t}=\frac{1}{t+1}\sum_{k=0}^{t}\bar{\mathbf{x}}^{k},\quad \eta_{t}=\frac{t+1}{\sum_{k=0}^{t}\frac{1}{\eta_{k}}}, \quad\bar{\mathbf{w} }_{t}=\frac{\sum_{k=0}^t\frac{\bar{\mathbf{w} }^{k}}{\eta_k}}{\sum_{k=0}^t\frac{1}{\eta_k}}.\nonumber
\end{equation}
If Lemma \ref{sec3-lemma2} holds, for any iteration $t>0$, scaling factors $\rho(t)>0$ and $\eta_k >0$, the error bound satisfies
\begin{equation}
\mathbf{w}^{*}\in \Omega,\quad f(\bar{\mathbf{x}}_{t})-f(\mathbf{x}^{*})+(\bar{\mathbf{w} }_{t}-\mathbf{w}^{*} )^{\top}\frac{1}{\eta_t\rho(t)}\boldsymbol{\Gamma}(\mathbf{w}^{*} )\leq \frac{1}{2 \eta_0\rho(t)(t+1)}\Vert\mathbf{w}^{*} -\mathbf{w} ^{0}\Vert_{\mathsf{H}_{0}}^{2}, \quad\forall \ \bar{\mathbf{w} }_{t} \in \Omega. \label{sec3-eq19}
\end{equation}
\end{theorem}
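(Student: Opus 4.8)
The plan is to start from the per-iteration inequality \eqref{sec3-eq18}, which already folds in the monotonicity of $\boldsymbol{\Gamma}$ via Lemma \ref{sec2-lemma2}, and to sum it over $k=0,1,\dots,t$ after forcing the quadratic terms to telescope. First I would rearrange \eqref{sec3-eq18} into
\[
\rho\left[f(\mathbf{x}) - f(\bar{\mathbf{x}}^{k})\right] + (\mathbf{w} - \bar{\mathbf{w}}^{k})^{\top}\tfrac{1}{\eta_k}\boldsymbol{\Gamma}(\mathbf{w}) \geq \tfrac{1}{2}\|\mathbf{w} - \mathbf{w}^{k+1}\|_{\mathbf{H}_{k}}^{2} - \tfrac{1}{2}\|\mathbf{w} - \mathbf{w}^{k}\|_{\mathbf{H}_{k}}^{2}.
\]
The obstacle to a clean telescope is that each quadratic on the right is measured in a different weighting matrix $\mathbf{H}_{k}$. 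The key device is Lemma \ref{sec3-lemma3}, which guarantees $\mathbf{H}_{k+1}\preceq\mathbf{H}_{k}$; this licenses the lower bound $\|\mathbf{w} - \mathbf{w}^{k+1}\|_{\mathbf{H}_{k}}^{2}\geq\|\mathbf{w} - \mathbf{w}^{k+1}\|_{\mathbf{H}_{k+1}}^{2}$, after which the right-hand side becomes the genuine telescoping difference $\tfrac{1}{2}\|\mathbf{w} - \mathbf{w}^{k+1}\|_{\mathbf{H}_{k+1}}^{2} - \tfrac{1}{2}\|\mathbf{w} - \mathbf{w}^{k}\|_{\mathbf{H}_{k}}^{2}$.

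Summing over $k=0,\dots,t$ collapses the right side to $\tfrac{1}{2}\|\mathbf{w}-\mathbf{w}^{t+1}\|_{\mathbf{H}_{t+1}}^{2} - \tfrac{1}{2}\|\mathbf{w}-\mathbf{w}^{0}\|_{\mathbf{H}_{0}}^{2}$; dropping the nonnegative leading term and setting $\mathbf{w}=\mathbf{w}^{*}$, $\mathbf{x}=\mathbf{x}^{*}$ yields
\[
\sum_{k=0}^{t}\rho\left[f(\bar{\mathbf{x}}^{k}) - f(\mathbf{x}^{*})\right] + \sum_{k=0}^{t}(\bar{\mathbf{w}}^{k} - \mathbf{w}^{*})^{\top}\tfrac{1}{\eta_k}\boldsymbol{\Gamma}(\mathbf{w}^{*}) \leq \tfrac{1}{2}\|\mathbf{w}^{*} - \mathbf{w}^{0}\|_{\mathbf{H}_{0}}^{2}.
\]
Now I would treat the two sums separately, which is exactly where the two different averaging schemes come into play. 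For the objective, since $\rho$ is held fixed across iterations, convexity of $f$ (Jensen with uniform weights) gives $\sum_{k=0}^{t}f(\bar{\mathbf{x}}^{k})\geq(t+1)f(\bar{\mathbf{x}}_{t})$, so the first sum is bounded below by $\rho(t+1)[f(\bar{\mathbf{x}}_{t}) - f(\mathbf{x}^{*})]$. For the constraint term, the factor $1/\eta_k$ supplies precisely the weights defining $\bar{\mathbf{w}}_{t}$: writing $S:=\sum_{k=0}^{t}1/\eta_k=(t+1)/\eta_{t}$, linearity in the first argument collapses the second sum to $S(\bar{\mathbf{w}}_{t} - \mathbf{w}^{*})^{\top}\boldsymbol{\Gamma}(\mathbf{w}^{*})=\tfrac{t+1}{\eta_{t}}(\bar{\mathbf{w}}_{t} - \mathbf{w}^{*})^{\top}\boldsymbol{\Gamma}(\mathbf{w}^{*})$.

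Finally I would substitute $\mathbf{H}_{0}=\tfrac{1}{\eta_0}\mathsf{H}_{0}$ on the right-hand side and divide through by $\rho(t+1)$ to reach \eqref{sec3-eq19}. The main subtlety to flag is the asymmetry between the two averages: the objective is averaged uniformly because $\rho$ is constant over iterations, whereas $\bar{\mathbf{w}}_{t}$ must be the $1/\eta_k$-weighted average in order to absorb the varying $1/\eta_k$ multiplying $\boldsymbol{\Gamma}$; the definitions of $\bar{\mathbf{x}}_{t}$, $\eta_{t}$, and $\bar{\mathbf{w}}_{t}$ in the statement are engineered so that these two reductions align exactly with the claimed bound. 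The only genuinely delicate step is the matrix-monotonicity replacement that enables the telescope, which hinges on the non-increasing property of $\{r_{k},s_{k}\}$ established under condition \eqref{sec3-add5}; everything downstream of it is bookkeeping with convexity and the weighted averages.
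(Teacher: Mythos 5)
Your proposal is correct and follows essentially the same route as the paper: sum \eqref{sec3-eq18} over $k=0,\dots,t$, apply Jensen to the uniform average of $f$ and linearity to the $1/\eta_k$-weighted average multiplying $\boldsymbol{\Gamma}(\mathbf{w}^{*})$, and control the mismatched quadratic weights via the monotonicity of $\mathbf{H}_{k}$ before substituting $\mathbf{H}_{0}=\frac{1}{\eta_{0}}\mathsf{H}_{0}$ and dividing by $\rho(t)(t+1)$. The only cosmetic difference is that you downgrade $\|\mathbf{w}-\mathbf{w}^{k+1}\|_{\mathbf{H}_{k}}^{2}$ to $\|\mathbf{w}-\mathbf{w}^{k+1}\|_{\mathbf{H}_{k+1}}^{2}$ before summing so the right-hand side telescopes exactly, whereas the paper sums first and then drops the residual $\sum_{k}\|\cdot\|_{\mathbf{D}_{k}}^{2}$ terms using $\mathbf{D}_{k}=\mathbf{H}_{k}-\mathbf{H}_{k+1}\succeq 0$; these are the same estimate.
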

\begin{proof}
Since the sequence $\{\bar{\mathbf{w} }^{k},k=0, 1,\cdots,t\}$ belongs to the convex set $\Omega$, its linear combination $\bar{\mathbf{w} }_{t}$ also belongs to $ \Omega$. Summing inequality \eqref{sec3-eq18} over all iterations, for any $ \mathbf{w} \in \Omega$, we obtain: 
\begin{equation}\begin{aligned}
\rho(t)(t+1)f(\mathbf{x})- \rho(t)\sum_{k=0}^{t}f(\bar{\mathbf{x}}^{k})+\sum_{k=0}^{t}\frac{1}{\eta_k}\left(\mathbf{w} -\bar{\mathbf{w} }^{k}\right)^{\top}\boldsymbol{\Gamma}(\mathbf{w} )&+\frac{1}{2 }\sum_{k=0}^{t}\|\mathbf{w} -\mathbf{w} ^{k}\|_{\mathbf{H}_{k}}^{2}\ge\frac{1}{2 }\sum_{k=0}^{t}\|\mathbf{w} -\mathbf{w} ^{k+1}\|_{\mathbf{H}_{k}}^{2}.\nonumber
\end{aligned}\end{equation}
The inequality above divided by $t+1$ equals
\begin{equation}\begin{aligned}
\rho(t) f(\mathbf{x})- \frac{\rho(t) }{t+1}\sum_{k=0}^{t}f(\bar{\mathbf{x}}^{k})+\left(\mathbf{w} -\bar{\mathbf{w} }_{t}\right)^{\top}\frac{1}{\eta_t}\boldsymbol{\Gamma}(\mathbf{w} )+&\frac{1}{2 (t+1)}\sum_{k=0}^{t}\|\mathbf{w} -\mathbf{w} ^{k}\|_{\mathbf{H}_{k}}^{2}\ge\frac{1}{2 (t+1)}\sum_{k=0}^{t}\|\mathbf{w} -\mathbf{w} ^{k+1}\|_{\mathbf{H}_{k}}^{2}.
 \end{aligned}\nonumber
 \end{equation}
Based on the fact that $f(\mathbf{x})$ is convex, the inequality  $f(\bar{\mathbf{x}}_{t})\leq\frac{1}{t+1}\sum_{k=0}^{t}f(\bar{\mathbf{x}}^{k})$ holds.
\begin{equation}\begin{aligned}
\rho(t) [f(\mathbf{x})- f(\bar{\mathbf{x}}_{t})] +\left(\mathbf{w} -\bar{\mathbf{w} }_{t}\right)^{\top}\frac{1}{\eta_t}\boldsymbol{\Gamma}(\mathbf{w} )+\frac{1}{2 (t+1)}\sum_{k=0}^{t}\big[\|\mathbf{w} -\mathbf{w} ^{k}\|_{\mathbf{H}_{k}}^{2}-\|\mathbf{w} -\mathbf{w} ^{k+1}\|_{\mathbf{H}_{k}}^{2}\big]&\ge 0.
 \end{aligned}
\nonumber
\end{equation}
Setting $\mathbf{w}=\mathbf{w}^{*}$, the above inequality can be expressed as
\begin{align}
\rho(t) [f(\bar{\mathbf{x}}_{t})-f(\mathbf{x}^{*})] +\left(\bar{\mathbf{w} }_{t}-\mathbf{w}^{*}\right)^{\top}\frac{1}{\eta_t}\boldsymbol{\Gamma}(\mathbf{w}^{*} )&\le \frac{1}{2 (t+1)}\sum_{k=0}^{t}\left[\|\mathbf{w}^{*} -\mathbf{w} ^{k}\|_{\mathbf{H}_{k}}^{2}-\|\mathbf{w}^{*} -\mathbf{w} ^{k+1}\|_{\mathbf{H}_{k}}^{2}\right],\nonumber\\
\rho(t) [f(\bar{\mathbf{x}}_{t})-f(\mathbf{x}^{*})]+\left(\bar{\mathbf{w} }_{t}-\mathbf{w}^{*}\right)^{\top}\frac{1}{\eta_t}\boldsymbol{\Gamma}(\mathbf{w}^{*} )&\le\frac{1}{2(t+1) }\left[\|\mathbf{w}^{*} -\mathbf{w} ^{0}\|_{\mathbf{H}_{0}}^{2}-\sum_{k=0}^{t-1}\|\mathbf{w}^{*} -\mathbf{w} ^{k}\|_{\mathbf{D}_{k}}^{2} \right].
\label{sec3-eq20}
 \end{align}
Since $\mathbf{D}_{k}\succeq0, (k=0, \cdots, t-1)$ and $\mathbf{H}_{0} = \frac{1}{\eta_0}\mathsf{H}_{0}$, the inequality (\ref{sec3-eq20}) can be further rewritten as 
\begin{align}
f(\bar{\mathbf{x}}_{t})-f(\mathbf{x}^{*})+\left(\bar{\mathbf{w} }_{t}-\mathbf{w}^{*}\right)^{\top}\frac{1}{\eta_t\rho(t)}\boldsymbol{\Gamma}(\mathbf{w}^{*} )&\le\frac{1}{2 \eta_0\rho(t)(t+1)}\|\mathbf{w}^{*} -\mathbf{w} ^{0}\|_{\mathsf{H}_{0}}^{2}.\nonumber
 \end{align}
Thus, this theorem holds.
\end{proof}

\begin{remark}
The above theorem shows that the \textsf{Spice} method achieves an ergodic convergence rate of $\mathcal{O}(1/[\rho(t)(t+1)])$. The inequality (\ref{sec3-eq20}) provides an upper bound on the error, which is influenced by the scaling factor $\rho(t)$, the iteration number $t$, and the initial conditions of the variables. This insight highlights how different choices of $\rho(t)$ directly impact the convergence behavior of the method.
\end{remark}

\begin{corollary}
For any iteration $t > 0$, the choices of the scaling factor $\rho(t)$ allows for flexible control over the convergence rate:
\begin{enumerate}
\item For any $\alpha > 0$, setting $\rho(t) = (t+1)^{\alpha}$ results in an ergodic power-law convergence rate of $\mathcal{O}(1/(t+1)^{1+\alpha})$. This demonstrates that increasing $\alpha$ enhances the convergence rate, leading to faster decay of the error.
\item For any $\beta > 0$, setting $\rho(t) = e^{\beta t}$ results in an ergodic exponential convergence rate of $\mathcal{O}(1/[e^{\beta t}(t+1)])$. This choice accelerates convergence even further as the error decays exponentially with increasing iterations.
\item Setting $\rho(t) = (t+1)^{t}$ yields an ergodic power-exponential convergence rate of $\mathcal{O}(1/(t+1)^{t+1})$. This setting combines both power-law and exponential decay, offering a robust convergence rate that rapidly diminishes the error as $t$ increases.
\end{enumerate}
As $\rho(t) \rightarrow \infty$, the error term $f(\bar{\mathbf{x}}_{t}) - f(\mathbf{x}^{*}) \rightarrow 0_{+}$, ensuring the \textsf{Spice} method's asymptotic convergence to the optimal solution.
\end{corollary}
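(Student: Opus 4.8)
The plan is to obtain the corollary as an immediate consequence of the master estimate \eqref{sec3-eq19} proved in Theorem \ref{sec3-theorem3}: each of the three rates is read off by substituting the prescribed $\rho(t)$ into the factor $\rho(t)(t+1)$ appearing in the denominator of the right-hand side. First I would isolate the constant $C := \tfrac{1}{2\eta_0}\Vert \mathbf{w}^{*} - \mathbf{w}^{0}\Vert_{\mathsf{H}_{0}}^{2}$ and argue that it does not depend on $t$. This is exactly the point recorded just after the definition of $\mathsf{H}_{0}$: the matrix $\mathsf{H}_{0}$ is built only from the initial quantities $\mathsf{R}(\mathbf{x}^{0})$ and $\mathsf{R}(\bar{\mathbf{x}}^{0})$ and is independent of the iteration-varying scaling sequence $\eta_{k}$. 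With $C$ fixed, \eqref{sec3-eq19} reads $f(\bar{\mathbf{x}}_{t}) - f(\mathbf{x}^{*}) + (\bar{\mathbf{w}}_{t} - \mathbf{w}^{*})^{\top}\tfrac{1}{\eta_{t}\rho(t)}\boldsymbol{\Gamma}(\mathbf{w}^{*}) \le C/[\rho(t)(t+1)]$.

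Next I would remove the inner-product term on the left. Since $\bar{\mathbf{w}}_{t} \in \Omega$, the quantity $(\bar{\mathbf{w}}_{t} - \mathbf{w}^{*})^{\top}\boldsymbol{\Gamma}(\mathbf{w}^{*})$ is nonnegative (as recorded in the Remark following Lemma \ref{sec2-lemma3}, and consistent with the monotonicity of $\boldsymbol{\Gamma}$ from Lemma \ref{sec2-lemma2}), and because $\eta_{t}, \rho(t) > 0$ the whole term may be discarded to give the scalar inequality $f(\bar{\mathbf{x}}_{t}) - f(\mathbf{x}^{*}) \le C/[\rho(t)(t+1)]$. From here the three claims are immediate: for $\rho(t) = (t+1)^{\alpha}$ the denominator is $(t+1)^{1+\alpha}$, for $\rho(t) = e^{\beta t}$ it is $e^{\beta t}(t+1)$, and for $\rho(t) = (t+1)^{t}$ it is $(t+1)^{t+1}$, yielding the stated $\mathcal{O}$-rates. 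The closing limit is obtained by letting $\rho(t) \to \infty$ in this scalar inequality, so the upper bound tends to $0$ and hence $f(\bar{\mathbf{x}}_{t}) - f(\mathbf{x}^{*}) \to 0_{+}$.

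The arithmetic is entirely routine, so the substance of the argument lies in two checks. The first is the $t$-independence of $C$, which I would state explicitly rather than leave implicit, since the whole point of the scaling construction is that the $\eta_{k}$-dependence has been factored out into $\rho(t)(t+1)$. The second, and the one I would flag as the genuine obstacle, is the ``$0_{+}$'' assertion: the master bound controls the gap only from above (together with the nonnegative constraint-type term), so the lower bound $f(\bar{\mathbf{x}}_{t}) \ge f(\mathbf{x}^{*})$ must be argued separately from the optimality of $\mathbf{x}^{*}$ and the feasibility of the ergodic average $\bar{\mathbf{x}}_{t}$. Because $\bar{\mathbf{x}}_{t}$ need not be exactly feasible at finite $t$, this is the delicate step typical of ergodic primal-dual analysis, and I would either invoke asymptotic feasibility of $\bar{\mathbf{x}}_{t}$ or reinterpret the ``$0_{+}$'' claim through the combined objective-plus-constraint residual that the bound genuinely controls.
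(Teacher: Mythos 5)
Your proposal is correct and follows exactly the route the paper intends: the corollary is stated without proof as an immediate consequence of Theorem \ref{sec3-theorem3}, and reading off the rates by substituting each $\rho(t)$ into the denominator $\rho(t)(t+1)$ of \eqref{sec3-eq19}, with the constant $\tfrac{1}{2\eta_0}\Vert\mathbf{w}^{*}-\mathbf{w}^{0}\Vert_{\mathsf{H}_{0}}^{2}$ being $t$-independent, is all that is required. Your closing caveat --- that the ``$0_{+}$'' assertion needs a separate lower bound $f(\bar{\mathbf{x}}_{t})\ge f(\mathbf{x}^{*})$, which is not automatic because the ergodic average $\bar{\mathbf{x}}_{t}$ need not be feasible at finite $t$ --- is a legitimate point that the paper leaves unaddressed, and your suggestion to interpret the rate through the combined objective-plus-constraint residual is the standard and correct resolution.
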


\subsection{Solving Convex Problems with Equality Constraints}
For the single variable, the convex problems with equality constraints can be written as:
\begin{equation}
\begin{aligned}
 \min\left\{ f(\mathbf{x}) \mid   \phi_{i}(\mathbf{x}) \leq 0, \mathbf{A}\mathbf{x}=\mathbf{b}, \ \mathbf{x}\in \mathcal{X}, \ i=1,\cdots,p_{1}\right\},
\end{aligned}\nonumber
\end{equation}
where $\mathbf{A} \in \mathbb{R}^{p_{2} \times n}$, $\mathbf{b} \in \mathbb{R}^{p_{2}}$ ($p_{1} + p_{2} = p$), and $\Phi(\mathbf{x}) = [\phi_{1}(\mathbf{x}), \cdots, \phi_{p_{1}}(\mathbf{x}), \phi_{p_{1} + 1}(\mathbf{x}), \cdots, \phi_{p}(\mathbf{x})]^{\top}$, with $[\phi_{p_{1} + 1}(\mathbf{x}), \cdots, \phi_{p}(\mathbf{x})]^{\top} = \mathbf{A} \mathbf{x} - \mathbf{b}$. The domain of the dual variable $\boldsymbol{\lambda}$ becomes $\mathcal{Z} := \mathbb{R}^{p_{1}}_{+} \times \mathbb{R}^{p_{2}}$.

\section{Nonlinear Convex Problems with Separable Variables} 
In this section, the \textsf{Spice} method is extended to address the following separable convex optimization problem, which includes nonlinear inequality constraints:
\begin{equation}
\begin{aligned}
 \mathsf{P}2: \quad\min\left\{ f(\mathbf{x})+g(\mathbf{y}) \ \middle|\   \phi_{i}(\mathbf{x}) +\psi_{i}(\mathbf{y})\leq 0,\ \mathbf{x}\in \mathcal{X},\ \mathbf{y}\in \mathcal{Y}, \ i=1,\cdots,p\right\},
\end{aligned}\label{sec4-eq1}
\end{equation}
where two conve set $\mathcal{X}\in\mathbb{R}^{n}$ and $\mathcal{Y}\in\mathbb{R}^{m}$ are nonempty and closed. Two convex objective functions $\{f(\mathbf{x}): \mathbb{R}^{n}\rightarrow \mathbb{R}\}$ and $\{g(\mathbf{y}): \mathbb{R}^{m}\rightarrow \mathbb{R}\}$ are proper and closed. The constraint functions $\{\phi_{i}: \mathbb{R}^{n}\rightarrow \mathbb{R},\; i=1,\ldots,p\}$ and $\{\psi_{i}: \mathbb{R}^{m}\rightarrow \mathbb{R},\; i=1,\ldots,p\}$ are convex and continuously differentiable. The Lagrangian function associated with \textsf{P}2 is characterized as:
\begin{equation}
\mathcal{L}(\mathbf{x},\mathbf{y},\boldsymbol{\lambda})= f(\mathbf{x})+g(\mathbf{y})+\boldsymbol{\lambda}^{\top} [\Phi(\mathbf{x}) + \Psi(\mathbf{y})], \label{sec4-eq2}
\end{equation} 
where  $\Phi(\mathbf{x})=[\phi_{1}(\mathbf{x}),\ldots,\phi_{p}(\mathbf{x})]^{\top}$  and $\Psi(\mathbf{y})=[\psi_{1}(\mathbf{y}),\cdots,\psi_{p_{1}}(\mathbf{y})]^{\top}$. The dual variable $\boldsymbol{\lambda}$ belongs to the set $\mathcal{Z}:=\mathbb{R}^{p}_{+}$. In fact, constrained convex optimization can be interpreted as a specific instance of a saddle point problem:
\begin{equation}
 \min_{\mathbf{x},\mathbf{y}} \max_{\boldsymbol{\lambda}} \left\{\mathcal{L}(\mathbf{x},\mathbf{y},\boldsymbol{\lambda})\mid \mathbf{x}\in \mathcal{X},\mathbf{y}\in \mathcal{Y}, \boldsymbol{\lambda}\in \mathcal{Z}\right\}.\label{sec4-eq3}
\end{equation}

\noindent By Lemma \ref{sec2-lemma1}, for the saddle point, the following variational inequality holds:
\begin{empheq}[left={}\empheqlbrace]{alignat=2}
     \mathbf{x}^{*}\in \mathcal{X},\quad && f(\mathbf{x})- f(\mathbf{x}^{*})+(\mathbf{x}-\mathbf{x}^{*})^{\top}\mathcal{D} \Phi (\mathbf{x}^{*})^{\top}\boldsymbol{\lambda}^{*}\ge0,&\quad\forall \ \mathbf{x}\in \mathcal{X},\nonumber\\[0.1cm]
       \mathbf{y}^{*}\in \mathcal{Y},\quad && g(\mathbf{y})- g(\mathbf{y}^{*})+(\mathbf{y}-\mathbf{y}^{*})^{\top}\mathcal{D} \Psi (\mathbf{y}^{*})^{\top}\boldsymbol{\lambda}^{*}\ge0,&\quad\forall \ \mathbf{y}\in \mathcal{Y},\nonumber\\[0.1cm]
     \boldsymbol{\lambda}^{*}\in  \mathcal{Z}, \quad&&(\boldsymbol{\lambda}-\boldsymbol{\lambda}^{*})^{\top} [- \Phi (\mathbf{x}^{*})-\Psi (\mathbf{y}^{*})]\ge0,&\quad \forall \ \boldsymbol{\lambda}\in  \mathcal{Z},\nonumber
        \end{empheq}
where $\mathcal{D} \Phi (\mathbf{x})=[\nabla \phi_{1},\cdots,\nabla \phi_{p}]^{\top}\in \mathbb{R}^{p\times n}$, $\mathcal{D} \Psi (\mathbf{y})=[\nabla \psi_{1},\cdots,\nabla \psi_{p}]^{\top}\in \mathbb{R}^{p\times m}$. 
The above inequalities can be further characterized as a monotone variational inequality:
\begin{equation}
 \mathbf{w}^{*}\in \Omega, \quad \vartheta(\mathbf{u})- \vartheta(\mathbf{u}^{*})+(\mathbf{w}-\mathbf{w}^{*})^{\top}\boldsymbol{\Gamma}(\mathbf{w}^{*})\ge0, \quad\forall \ \mathbf{w}\in \Omega, \label{sec4-eq4}
\end{equation}
where $\vartheta(\mathbf{u})=f(\mathbf{x})+g(\mathbf{y})$, 
\begin{equation}\label{sec4-eq5}
\mathbf{u}=\left( \begin{array}{c} \mathbf{x} \\[0.2cm]
 \mathbf{y} \end{array} \right), \quad
 \mathbf{w}=\left( \begin{array}{c} \mathbf{x} \\[0.2cm]
 \mathbf{y} \\[0.2cm]
      \boldsymbol{\lambda} \end{array} \right), \quad
 \boldsymbol{\Gamma}(\mathbf{w})=\left( \begin{array}{c} \mathcal{D} \Phi (\mathbf{x})^{\top}\boldsymbol{\lambda} \\[0.2cm]
 \mathcal{D} \Psi (\mathbf{y})^{\top}\boldsymbol{\lambda} \\[0.2cm]
        -\Phi (\mathbf{x}) -\Psi (\mathbf{y}) \end{array} \right),\quad \mbox{ and } \quad\Omega=\mathcal{X}\times \mathcal{Y}\times\mathcal{Z}. 
\end{equation}
 Then, the following lemma shows that the operator $\boldsymbol{\Gamma}$ is monotone.

\begin{lemma}\label{sec4-lemma1}
Let $\mathcal{X}\subset \mathbb{R}^{n}$, $\mathcal{Y}\subset \mathbb{R}^{m}$, $ \mathcal{Z}:=\mathbb{R}^{p}_{+}$ be closed convex sets. Then the operator $\boldsymbol{\Gamma}$ defined in (\ref{sec4-eq5}) satisfies 
\begin{equation}
(\mathbf{w}-\bar{\mathbf{w}})^{\top}[\boldsymbol{\Gamma}(\mathbf{w})-\boldsymbol{\Gamma}(\bar{\mathbf{w}})]\ge0, \quad\forall \ \mathbf{w}, \bar{\mathbf{w}}\in \mathcal{X}\times\mathcal{Y}\times \mathcal{Z}.\label{sec4-eq6}
\end{equation}
\end{lemma}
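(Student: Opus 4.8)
The plan is to prove monotonicity by direct expansion of the bilinear form, exploiting the convexity of the constraint functions $\phi_i,\psi_i$ together with the nonnegativity of the dual variables $\boldsymbol{\lambda},\bar{\boldsymbol{\lambda}}\in\mathcal{Z}=\mathbb{R}^p_+$. This mirrors the argument behind Lemma \ref{sec2-lemma2}, now carried out in the separable setting. First I would write $\mathbf{w}=(\mathbf{x},\mathbf{y},\boldsymbol{\lambda})$ and $\bar{\mathbf{w}}=(\bar{\mathbf{x}},\bar{\mathbf{y}},\bar{\boldsymbol{\lambda}})$ and split the inner product into its three natural blocks, using $\mathcal{D}\Phi(\mathbf{x})^{\top}\boldsymbol{\lambda}=\sum_{i=1}^{p}\lambda_i\nabla\phi_i(\mathbf{x})$ and $\mathcal{D}\Psi(\mathbf{y})^{\top}\boldsymbol{\lambda}=\sum_{i=1}^{p}\lambda_i\nabla\psi_i(\mathbf{y})$:
\begin{equation}
(\mathbf{w}-\bar{\mathbf{w}})^{\top}[\boldsymbol{\Gamma}(\mathbf{w})-\boldsymbol{\Gamma}(\bar{\mathbf{w}})]=\sum_{i=1}^{p} S_i,\nonumber
\end{equation}
where $S_i$ collects the $x$-, $y$-, and $\lambda$-contributions associated with the $i$-th constraint.

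The decisive step is to regroup each $S_i$ by the multipliers $\lambda_i$ and $\bar{\lambda}_i$ rather than by variable block. Carrying out this rearrangement, the coefficient of $\lambda_i$ turns out to be exactly
\begin{equation}
\big[\phi_i(\bar{\mathbf{x}})-\phi_i(\mathbf{x})+(\mathbf{x}-\bar{\mathbf{x}})^{\top}\nabla\phi_i(\mathbf{x})\big]+\big[\psi_i(\bar{\mathbf{y}})-\psi_i(\mathbf{y})+(\mathbf{y}-\bar{\mathbf{y}})^{\top}\nabla\psi_i(\mathbf{y})\big],\nonumber
\end{equation}
which is a sum of two first-order convexity (Bregman-type) gaps and is therefore nonnegative. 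Symmetrically, the coefficient of $\bar{\lambda}_i$ reduces to the convexity gaps evaluated at $\bar{\mathbf{x}},\bar{\mathbf{y}}$, namely $\phi_i(\mathbf{x})-\phi_i(\bar{\mathbf{x}})-(\mathbf{x}-\bar{\mathbf{x}})^{\top}\nabla\phi_i(\bar{\mathbf{x}})\ge0$ and the analogous expression for $\psi_i$. Hence $S_i=\lambda_i(\ge0)+\bar{\lambda}_i(\ge0)\ge0$ for every $i$, and summing over $i$ yields \eqref{sec4-eq6}.

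The step I expect to be the main obstacle is the regrouping itself: expanding the three blocks naively produces cross terms coupling $\boldsymbol{\lambda}$ with the gradient differences $\nabla\phi_i(\mathbf{x})-\nabla\phi_i(\bar{\mathbf{x}})$, and it is tempting to try to control these via monotonicity of each gradient map alone. That route stalls, because the $\lambda$-block of $\boldsymbol{\Gamma}$ (the terms $-\Phi-\Psi$) must be brought in to cancel the apparent skew-symmetric coupling. The clean resolution is precisely the collection-by-multiplier observed above, after which the only inputs needed are the gradient inequalities for convex differentiable functions and the sign constraint $\lambda_i,\bar{\lambda}_i\ge0$ coming from $\mathcal{Z}=\mathbb{R}^p_+$; the latter is essential, since without nonnegative multipliers the sign of $S_i$ could not be guaranteed.
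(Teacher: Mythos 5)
Your proposal is correct and follows essentially the same route as the paper's own proof: expand the bilinear form, regroup the terms by the multipliers $\lambda_i$ and $\bar{\lambda}_i$, recognize each coefficient as a nonnegative first-order convexity gap of $\phi_i$ or $\psi_i$, and conclude using $\boldsymbol{\lambda},\bar{\boldsymbol{\lambda}}\in\mathbb{R}^{p}_{+}$. The paper packages these gaps as the vectors $\boldsymbol{\Theta}'$ and $\boldsymbol{\Theta}''$, but the argument is the same.
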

\begin{proof}
Recall that $\{\phi_{i}$ and $\psi_{i}\;(i=1,\ldots,p)\}$ are assumed to be convex and differentiable on $\mathcal{X}$ and $\mathcal{Y}$. Then for any $\mathbf{x},\;\bar{\mathbf{x}}\in\mathcal{X}$ and $\mathbf{y},\;\bar{\mathbf{y}}\in\mathcal{Y}$, we have 
\begin{equation*}
  \phi_{i}(\mathbf{x})\ge\phi_{i}(\bar{\mathbf{x}})+\langle\nabla\phi_{i}(\bar{\mathbf{x}}),\mathbf{x}-\bar{\mathbf{x}}\rangle, \quad i=1,\ldots,p,
\end{equation*}
and 
\begin{equation*}
\psi_{i}(\mathbf{y})\ge\psi_{i}(\bar{\mathbf{y}})+\langle\nabla\psi_{i}(\bar{\mathbf{y}}),\mathbf{y}-\bar{\mathbf{y}}\rangle, \quad i=1,\ldots,p,
\end{equation*}
It implies that 
$$\theta_i'(\mathbf{x}\mid \bar{\mathbf{x}}):=\phi_{i}(\mathbf{x})-\phi_{i}(\bar{\mathbf{x}})+\langle\nabla\phi_{i}(\bar{\mathbf{x}}),\mathbf{x}-\bar{\mathbf{x}}\rangle\ge0, \quad i=1,\ldots,p,$$
and
$$\theta_i''(\mathbf{y}\mid \bar{\mathbf{y}}):=\psi_{i}(\mathbf{y})-\psi_{i}(\bar{\mathbf{y}})+\langle\nabla\phi_{i}(\bar{\mathbf{y}}),\mathbf{y}-\bar{\mathbf{y}}\rangle\ge0, \quad i=1,\ldots,p.$$ 
In the same way, for any $i\in\{1,\ldots,p\}$, we obtain $\theta_i'(\bar{\mathbf{x}}\mid \mathbf{x})\ge0$ and $\theta_i''(\bar{\mathbf{y}}\mid \mathbf{y})\ge0$. Let $\boldsymbol{\Theta}':=[\theta_{1}',\cdots,\theta_{p}']^{\top}$ and $\boldsymbol{\Theta}'':=[\theta_{1}'',\cdots,\theta_{p}'']^{\top}$. Since $\boldsymbol{\lambda},\bar{\boldsymbol{\lambda}}\in \mathcal{Z}$, $\boldsymbol{\Theta}'(\mathbf{x}\mid\bar{\mathbf{x}}),\boldsymbol{\Theta}'(\bar{\mathbf{x}}\mid\mathbf{x}),\boldsymbol{\Theta}''(\mathbf{y}\mid\bar{\mathbf{y}}),\boldsymbol{\Theta}''(\bar{\mathbf{y}}\mid\mathbf{y})\in \mathbb{R}^{p}_{+}$, the following inequality holds.
\begin{equation*}
\begin{aligned}
(\mathbf{w}-\bar{\mathbf{w}})^{\top}[\boldsymbol{\Gamma}(\mathbf{w})-\boldsymbol{\Gamma}(\bar{\mathbf{w}})]
=&\left( \begin{array}{c} \mathbf{x} -\bar{\mathbf{x}}\\ [0.2cm]
\mathbf{y} -\bar{\mathbf{y}}\\ [0.2cm]
\boldsymbol{\lambda}-\bar{\boldsymbol{\lambda}} \end{array} \right)^{\top}\left( \begin{array}{c} \mathcal{D} \Phi (\mathbf{x})^{\top}\boldsymbol{\lambda} -\mathcal{D} \Phi (\bar{\mathbf{y}})^{\top}\bar{\boldsymbol{\lambda}}\\[0.2cm]
\mathcal{D} \Psi (\mathbf{y})^{\top}\boldsymbol{\lambda} -\mathcal{D} \Psi (\bar{\mathbf{x}})^{\top}\bar{\boldsymbol{\lambda}}\\[0.2cm]
-  \Phi (\mathbf{x}) +  \Phi (\bar{\mathbf{x}})-  \Psi (\mathbf{y}) +  \Psi (\bar{\mathbf{y}})\end{array} \right)\\[0.1cm]
 =&\boldsymbol{\lambda}^{\top}[  \Phi (\bar{\mathbf{x}})-  \Phi (\mathbf{x})-\mathcal{D} \Phi (\mathbf{x})(\bar{\mathbf{x}}-\mathbf{x} ) ]+\bar{\boldsymbol{\lambda}}^{\top} [  \Phi (\mathbf{x}) -  \Phi (\bar{\mathbf{x}})-\mathcal{D} \Phi (\bar{\mathbf{x}})(\mathbf{x} -\bar{\mathbf{x}})]\\[0.1cm]
 &+\boldsymbol{\lambda}^{\top}[  \Psi (\bar{\mathbf{y}})-  \Psi (\mathbf{y})-\mathcal{D} \Psi (\mathbf{y})(\bar{\mathbf{y}}-\mathbf{y} ) ]+\bar{\boldsymbol{\lambda}}^{\top} [  \Psi (\mathbf{y}) -  \Psi (\bar{\mathbf{y}})-\mathcal{D} \Psi (\bar{\mathbf{y}})(\mathbf{y} -\bar{\mathbf{y}})]\\[0.1cm]
 =&\boldsymbol{\lambda}^{\top}\boldsymbol{\Theta}'(\bar{\mathbf{x}}\mid\mathbf{x})+\bar{\boldsymbol{\lambda}}^{\top}\boldsymbol{\Theta}'(\mathbf{x}\mid\bar{\mathbf{x}})+\boldsymbol{\lambda}^{\top}\boldsymbol{\Theta}''(\bar{\mathbf{y}}\mid\mathbf{y})+\bar{\boldsymbol{\lambda}}^{\top}\boldsymbol{\Theta}''(\mathbf{y}\mid\bar{\mathbf{y}})\ge 0.
\end{aligned}
\end{equation*}
Thus, this lemma holds.
\end{proof}

\subsection{PPA-Like Prediction Scheme}\label{section3}
For the prediction scheme, we take the output of PPA $ \bar{\mathbf{w}}^{k}$ as the predictive variables. Consider the scaling factors $\rho > 0$ and $\eta_k > 0$. The corresponding scaling Lagrangian function for \textsf{P}2 is defined as 
\begin{equation}
\mathcal{L}(\mathbf{x},\mathbf{y},\boldsymbol{\lambda},\rho,\eta)= \rho[f(\mathbf{x})+g(\mathbf{y})]+\boldsymbol{\lambda}^{\top}\frac{1}{\eta} [\Phi(\mathbf{x}) + \Psi(\mathbf{y})],\label{sec4-eq7}
\end{equation} 
For the $k$-th iteration, the PPA-like prediction scheme utilizes the current estimates $(\mathbf{x}^{k},\mathbf{y}^{k},\boldsymbol{\lambda}^{k})$ to sequentially update the predicted values  $(\bar{\mathbf{x}}^{k}$, $\bar{\mathbf{y}}^{k}$, and $\bar{\boldsymbol{\lambda}}^{k})$ by solving the following subproblems:
\begin{subequations}\label{sec4-eq8}
\begin{empheq}[left={}\empheqlbrace]{alignat=2}
\bar{\mathbf{x}}^{k}=&\arg\min\{\mathcal{L}(\mathbf{x},\mathbf{y}^{k},\boldsymbol{\lambda}^{k},\rho,\eta_k)+\frac{r_{k}}{2}\|\mathbf{x}-\mathbf{x}^{k}\|^{2}\mid \mathbf{x}\in \mathcal{X}\},\\
\bar{\mathbf{y}}^{k}=&\arg\min\{\mathcal{L}(\bar{\mathbf{x}}^{k},\mathbf{y},\boldsymbol{\lambda}^{k},\rho,\eta_k)+\frac{r_{k}}{2}\|\mathbf{y}-\mathbf{y}^{k}\|^{2}\mid \mathbf{y}\in \mathcal{Y}\},\\
\bar{\boldsymbol{\lambda}}^{k}=&\arg\max\{\mathcal{L}(\bar{\mathbf{x}}^{k},\bar{\mathbf{y}}^{k},\boldsymbol{\lambda},\rho,\eta_k)-\frac{s_{k}}{2}\Vert\boldsymbol{\lambda}-\boldsymbol{\lambda}^{k}\Vert^{2}\mid \boldsymbol{\lambda}\in  \mathcal{Z}\}.
\end{empheq}\end{subequations}
As established in Lemma \ref{sec2-lemma1}, these optimization problems (\ref{sec4-eq8}) can be expressed as the following variational inequalities:
\begin{empheq}[left={}\empheqlbrace]{alignat=2}
     \bar{\mathbf{x}}^{k}\in \mathcal{X},\quad&&  \rho \left[f(\mathbf{x})- f(\bar{\mathbf{x}}^{k})\right]+(\mathbf{x}-\bar{\mathbf{x}}^{k})^{\top}\left[\frac{1}{\eta_k}\mathcal{D} \Phi (\bar{\mathbf{x}}^{k})^{\top}\boldsymbol{\lambda}^{k}+r_{k}(\bar{\mathbf{x}}^{k}-\mathbf{x}^{k})\right]\ge0,&\quad \forall \ \mathbf{x}\in \mathcal{X},\nonumber\\[0.1cm]
          \bar{\mathbf{y}}^{k}\in \mathcal{Y},\quad&&  \rho [g(\mathbf{y})- g(\bar{\mathbf{y}}^{k})]+(\mathbf{y}-\bar{\mathbf{y}}^{k})^{\top}\left[\frac{1}{\eta_k}\mathcal{D} \Psi (\bar{\mathbf{y}}^{k})^{\top}\boldsymbol{\lambda}^{k}+r_{k}(\bar{\mathbf{y}}^{k}-\mathbf{y}^{k})\right]\ge0,&\quad \forall \ \mathbf{y}\in \mathcal{Y},\nonumber\\[0.1cm]
     \bar{\boldsymbol{\lambda}}^{k}\in  \mathcal{Z},\quad&& (\boldsymbol{\lambda}-\bar{\boldsymbol{\lambda}}^{k})^{\top}\left[ -\frac{1}{\eta_k}[\Phi (\bar{\mathbf{x}}^{k})+\Psi (\bar{\mathbf{y}}^{k})]+s_{k}(\bar{\boldsymbol{\lambda}}^{k}-\boldsymbol{\lambda}^{k})\right]\ge0,&\quad \forall \ \boldsymbol{\lambda}\in  \mathcal{Z}.\nonumber
\end{empheq}
In light of the optimality conditions, we can further refine these inequalities into the form:
\begin{empheq}[left={}\empheqlbrace]{alignat=2}
     \bar{\mathbf{x}}^{k}\!\in\! \mathcal{X},\ &&  \rho\! \left[f(\mathbf{x})\!-\! f(\bar{\mathbf{x}}^{k})\right]\!+\!(\mathbf{x}\!-\!\bar{\mathbf{x}}^{k})^{\top}\left[\frac{1}{\eta_k}\mathcal{D} \Phi (\bar{\mathbf{x}}^{k})^{\top}\bar{\boldsymbol{\lambda}}^{k}\!+\!r_{k}(\bar{\mathbf{x}}^{k}\!-\!\mathbf{x}^{k})\!-\!\frac{1}{\eta_k}\mathcal{D} \Phi (\bar{\mathbf{x}}^{k})^{\top}(\bar{\boldsymbol{\lambda}}^{k}\!-\!\boldsymbol{\lambda}^{k})\right]\ge0,&\  \forall \ \mathbf{x}\in \mathcal{X},\nonumber\\[0.1cm]
          \bar{\mathbf{y}}^{k}\!\in\! \mathcal{Y},\ &&  \rho [g(\mathbf{y})\!-\! g(\bar{\mathbf{y}}^{k})]\!+\!(\mathbf{y}\!-\!\bar{\mathbf{y}}^{k})^{\top}\left[\frac{1}{\eta_k}\mathcal{D} \Psi (\bar{\mathbf{y}}^{k})^{\top}\bar{\boldsymbol{\lambda}}^{k}\!+\!r_{k}(\bar{\mathbf{y}}^{k}\!-\!\mathbf{y}^{k})\!-\!\frac{1}{\eta_k}\mathcal{D} \Psi (\bar{\mathbf{y}}^{k})^{\top}(\bar{\boldsymbol{\lambda}}^{k}\!-\!\boldsymbol{\lambda}^{k})\right]   \ge0,&\  \forall \ \mathbf{y}\in \mathcal{Y},\nonumber\\[0.1cm]
     \bar{\boldsymbol{\lambda}}^{k}\!\in\!  \mathcal{Z},\ && (\boldsymbol{\lambda}\!-\!\bar{\boldsymbol{\lambda}}^{k})^{\top}\left[ \!-\!\frac{1}{\eta_k}[\Phi (\bar{\mathbf{x}}^{k})\!+\!\Psi (\bar{\mathbf{y}}^{k})]\!+\!s_{k}(\bar{\boldsymbol{\lambda}}^{k}\!-\!\boldsymbol{\lambda}^{k})\right]\ge0,&\  \forall \ \boldsymbol{\lambda}\in  \mathcal{Z}.\nonumber
\end{empheq}
The above inequalities above can be combined into
\begin{equation}
   \rho [\vartheta(\mathbf{u})-\vartheta(\bar{\mathbf{u}}^{k})]+(\mathbf{w}- \bar{\mathbf{w}}^{k})^{\top}\left\{\frac{1}{\eta_k}\boldsymbol{\Gamma}( \bar{\mathbf{w}}^{k})+\left(\begin{array}{r}
r_{k}(\bar{\mathbf{x}}^{k}-\mathbf{x}^{k}) -\frac{1}{\eta_k}\mathcal{D} \Phi (\bar{\mathbf{x}}^{k})^{\top}(\bar{\boldsymbol{\lambda}}^{k}-\boldsymbol{\lambda}^{k}) \\[0.2cm]
r_{k}(\bar{\mathbf{y}}^{k}-\mathbf{y}^{k}) -\frac{1}{\eta_k}\mathcal{D} \Psi (\bar{\mathbf{y}}^{k})^{\top}(\bar{\boldsymbol{\lambda}}^{k}-\boldsymbol{\lambda}^{k}) \\[0.2cm]
s_{k}(\bar{\boldsymbol{\lambda}}^{k}-\boldsymbol{\lambda}^{k})
\end{array}
\right)\right\}\ge0, \quad\forall \ \mathbf{w}\in \Omega.\label{eq27 }\nonumber
\end{equation}
Further, it can be written as
\begin{equation}
 \rho [\vartheta(\mathbf{u})-\vartheta(\bar{\mathbf{u}}^{k})]+(\mathbf{w}- \bar{\mathbf{w}}^{k})^{\top}\left[\frac{1}{\eta_k}\boldsymbol{\Gamma}( \bar{\mathbf{w}}^{k})+\mathbf{Q}_{k}(\bar{\mathbf{w}}^{k}-\mathbf{w}^{k})\right]\ge0, \quad\forall \ \mathbf{w}\in \Omega,\label{sec4-eq9}
\end{equation}
where the proximal matrix is 
\begin{equation} \mathbf{Q}_{k}=\left(
\begin{array}{ccc}
r_{k}\mathbf{I}_{n} & \boldsymbol{0}&-\frac{1}{\eta_k}\mathcal{D} \Phi (\bar{\mathbf{x}}^{k})^{\top} \\[0.2cm]
\boldsymbol{0}&r_{k}\mathbf{I}_{m} &-\frac{1}{\eta_k}\mathcal{D} \Psi (\bar{\mathbf{y}}^{k})^{\top} \\[0.2cm]
\boldsymbol{0}&\boldsymbol{0}&s_{k}\mathbf{I}_{p}
\end{array}
\right),\nonumber
\end{equation}
and it can be considered a second-order diagonal scalar upper triangular block matrix. The variational inequality (\ref{sec4-eq9}) equals
\begin{equation}
 \bar{\mathbf{w}}^{k}\in \Omega, \quad \rho [\vartheta(\mathbf{u})- \vartheta(\bar{\mathbf{u}}^{k})]+(\mathbf{w}-\bar{\mathbf{w}}^{k})^{\top}\frac{1}{\eta_k}\boldsymbol{\Gamma}(\bar{\mathbf{w}}^{k})\ge(\mathbf{w}-\bar{\mathbf{w}}^{k})^{\top}\mathbf{Q}_{k}(\mathbf{w}^{k}-\bar{\mathbf{w}}^{k}), \quad\forall \ \mathbf{w}\in \Omega, \label{sec4-eq10}
\end{equation}
where the proximal matrix $\mathbf{Q}_{k}$ in the \textsf{Spice} method is called the predictive matrix.

\subsection{Matrix-Driven Correction Scheme}\label{section4}

For the correction scheme, we use a corrective matrix $\mathbf{M}_{k}$ to correct the predictive variables by the following equation.
\begin{equation}
\mathbf{w}^{k+1}=\mathbf{w}^{k}-\mathbf{M}_{k}(\mathbf{w}^{k}-\bar{\mathbf{w}}^{k}),\label{sec4-eq11}
\end{equation}
where the corrective matrix $\mathbf{M}_{k}$ is not unique. In this paper, we take
\begin{equation} \mathbf{M}_{k}=\left(
\begin{array}{ccc}
\mathbf{I}_{n} &\mathbf{0}& -\frac{1}{\eta_k r_{k}}\mathcal{D} \Phi (\bar{\mathbf{x}}^{k})^{\top} \\[0.2cm]\mathbf{0}&\mathbf{I}_{m}& -\frac{1}{\eta_k r_{k}}\mathcal{D} \Psi (\bar{\mathbf{y}}^{k})^{\top}\\[0.2cm]
\mathbf{0}&\mathbf{0}&\mathbf{I}_{p}
\end{array}\right), \quad\mathbf{M}_{k}^{-1}=\left(
\begin{array}{ccc}
\mathbf{I}_{n} &\mathbf{0}& \frac{1}{\eta_k r_{k}}\mathcal{D} \Phi (\bar{\mathbf{x}}^{k})^{\top} \\[0.2cm]\mathbf{0}&\mathbf{I}_{m}& \frac{1}{\eta_k r_{k}}\mathcal{D} \Psi (\bar{\mathbf{y}}^{k})^{\top}\\[0.2cm]
\mathbf{0}&\mathbf{0}&\mathbf{I}_{p}
\end{array}\right).\nonumber
\end{equation}
For any $r_{k}, s_{k}>0$, the extended matrix $\mathbf{H}_{k}$ is positive-definite:
\begin{equation} \mathbf{H}_{k}=\mathbf{Q}_{k}\mathbf{M}_{k}^{-1}=\left(
\begin{array}{ccc}
r_{k}\mathbf{I}_{n} & \mathbf{0} &\mathbf{0}\\[0.2cm]
\mathbf{0}&r_{k}\mathbf{I}_{m}&\mathbf{0}\\[0.2cm]
\mathbf{0}&\mathbf{0}&s_{k}\mathbf{I}_{p}
\end{array}
\right)\succ0.\nonumber
\end{equation}
When the condition $r_{k}s_{k}>\frac{1}{\eta_k^{2}}\|\mathcal{D} \Phi (\bar{\mathbf{x}}^{k})\|^{2}+\frac{1}{\eta_k^{2}}\|\mathcal{D} \Psi (\bar{\mathbf{y}}^{k})\|^{2}$ holds, we have
\begin{equation}\begin{aligned}
\mathbf{G}_{k}&=\mathbf{Q}_{k}^{\top}+\mathbf{Q}_{k}-\mathbf{M}_{k}^{\top}\mathbf{H}_{k}\mathbf{M}_{k}=\left(
\begin{array}{ccc}
r_{k}\mathbf{I}_{n} & \mathbf{0}&\mathbf{0} \\[0.2cm] \mathbf{0}&r_{k}\mathbf{I}_{m} &\mathbf{0}\\[0.2cm] \mathbf{0}&\mathbf{0}&s_{k}\mathbf{I}_{p} -\frac{1}{\eta_k^{2} r_{k}}[\mathcal{D} \Phi (\bar{\mathbf{x}}^{k})\mathcal{D} \Phi (\bar{\mathbf{x}}^{k})^{\top}+\mathcal{D} \Psi (\bar{\mathbf{y}}^{k})\mathcal{D} \Psi (\bar{\mathbf{y}}^{k})^{\top}]
\end{array}
\right)\succ0.\nonumber\end{aligned}
\end{equation}
For $\mu>1$ and $k\ge1$, we set $\mathsf{R}(\mathbf{u}^{k})=\|\mathcal{D} \Phi (\mathbf{x}^{k})\|^{2}+\|\mathcal{D} \Psi (\mathbf{y}^{k})\|^{2}$ and take a non-decreasing sequence like
\begin{subequations}\label{sec4-eq12}
\begin{empheq}[left={}\empheqlbrace]{alignat=2}
r_{k}&=\frac{1}{\eta_k}\sqrt{\mathsf{R}(\mathbf{u}^{k})},\\[0.1cm]
s_{k}&=\frac{\mu\mathsf{R}(\bar{\mathbf{u}}^{k})}{\eta_k\sqrt{\mathsf{R}(\mathbf{u}^{k})}}.
\end{empheq}
\end{subequations}
To get a non-increasing sequence $\{r_{k},s_{k}\}$, the scaling factor $\eta_k$ should satisfy
\begin{equation}
\eta_k \ge \max \left\{\eta_{k-1} \cdot \sqrt{\frac{\mathsf{R}(\mathbf{u}^{k})}{\mathsf{R}(\mathbf{u}^{k-1})}},\quad \eta_{k-1} \cdot \frac{\mathsf{R}(\bar{\mathbf{u}}^{k}) \sqrt{\mathsf{R}(\mathbf{u}^{k-1})}}{\mathsf{R}(\bar{\mathbf{u}}^{k-1}) \sqrt{\mathsf{R}(\mathbf{u}^{k})}} \right\}.\label{sec4-add13}
\end{equation}
For any iteration $k\ge0$, the difference matrix $\mathbf{D}_{k}=\mathbf{H}_{k}-\mathbf{H}_{k+1}$ is semi positive definite. The initial extended matrix can be written as $\mathbf{H}_{0} = \frac{1}{\eta_0}\mathsf{H}_{0}$, where 
\begin{eqnarray} 
  \mathsf{H}_{0}  =  \left(\begin{array}{ccc}
       \sqrt{\mathsf{R}(\mathbf{u}^{0})}\mathbf{I}_{n} &  \mathbf{0}& \mathbf{0}\\[0.1cm]
       \mathbf{0}&  \sqrt{\mathsf{R}(\mathbf{u}^{0})}\mathbf{I}_{m} &  \mathbf{0}\\[0.1cm]
        \mathbf{0}& \mathbf{0}   & \frac{\mu\mathsf{R}(\bar{\mathbf{u}}^{0})}{\sqrt{\mathsf{R}(\mathbf{u}^{0})}} \mathbf{I}_{p} \end{array} \right).\nonumber
\end{eqnarray}

\subsection{Convergence Analysis}
\begin{lemma}\label{sec4-lemma2}
Let $\{\mathbf{w}^{k},\bar{\mathbf{w}}^{k},\mathbf{w}^{k+1}\}$ be the sequences generated by the \textsf{Spice} method. When we use equation (\ref{sec4-eq12}) to update the values of $r_{k}$ and $s_{k}$, the predictive matrix $\mathbf{Q}_{k}$ and corrective matrix $\mathbf{M}_{k}$ are upper triangular and satisfy
\begin{equation}
\mathbf{H}_{k}=\mathbf{Q}_{k}\mathbf{M}_{k}^{-1}\succ0 \quad {\rm and}\quad  \mathbf{G}_{k}=\mathbf{Q}_{k}^{\top}+\mathbf{Q}_{k}-\mathbf{M}_{k}^{\top}\mathbf{H}_{k}\mathbf{M}_{k}\succ0,\label{sec4-eq13}
\end{equation}
 then the following variational inequality holds
\begin{equation}
  \rho[\vartheta(\mathbf{u})- \vartheta(\bar{\mathbf{u}}^{k})]+(\mathbf{w}-\bar{\mathbf{w}}^{k})^{\top}\frac{1}{\eta_{k}}\boldsymbol{\Gamma}(\bar{\mathbf{w}}^{k})\ge\frac{1}{2}\left(\|\mathbf{w}-\mathbf{w}^{k+1}\|_{\mathbf{H}_{k}}^{2}-\|\mathbf{w}-\mathbf{w}^{k}\|_{\mathbf{H}_{k}}^{2}\right) +\frac{1}{2}\|\bar{\mathbf{w}}^{k}-\mathbf{w}^{k}\|_{\mathbf{G}_{k}}^{2},\quad \forall \ \mathbf{w}\in \Omega. \label{sec4-eq14}
\end{equation}
\end{lemma}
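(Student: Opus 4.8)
The plan is to replay the derivation of Lemma~\ref{sec3-lemma2} essentially verbatim, now with the $3\times3$ block matrices $\mathbf{Q}_{k},\mathbf{M}_{k},\mathbf{H}_{k},\mathbf{G}_{k}$ introduced in Sections~\ref{section3} and~\ref{section4} and with $\vartheta(\mathbf{u})$ in place of $f(\mathbf{x})$. The only object I need to transform is the single proximal term $(\mathbf{w}-\bar{\mathbf{w}}^{k})^{\top}\mathbf{Q}_{k}(\mathbf{w}^{k}-\bar{\mathbf{w}}^{k})$ appearing on the right-hand side of the prediction variational inequality~\eqref{sec4-eq10}: rewriting it as a combination of squared $\mathbf{H}_{k}$-norms together with one $\mathbf{G}_{k}$-norm immediately turns~\eqref{sec4-eq10} into the desired~\eqref{sec4-eq14}.

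First I would use the factorization $\mathbf{H}_{k}=\mathbf{Q}_{k}\mathbf{M}_{k}^{-1}$, i.e. $\mathbf{Q}_{k}=\mathbf{H}_{k}\mathbf{M}_{k}$, together with the correction rule~\eqref{sec4-eq11}, which gives $\mathbf{M}_{k}(\mathbf{w}^{k}-\bar{\mathbf{w}}^{k})=\mathbf{w}^{k}-\mathbf{w}^{k+1}$. After this substitution the proximal term reads $(\mathbf{w}-\bar{\mathbf{w}}^{k})^{\top}\mathbf{H}_{k}(\mathbf{w}^{k}-\mathbf{w}^{k+1})$. I would then apply the algebraic identity~\eqref{sec3-eq8} with $\mathbf{w}_{1}=\bar{\mathbf{w}}^{k}$, $\mathbf{w}_{2}=\mathbf{w}^{k}$, $\mathbf{w}_{3}=\mathbf{w}^{k+1}$, splitting the term into $\tfrac{1}{2}\big(\|\mathbf{w}-\mathbf{w}^{k+1}\|_{\mathbf{H}_{k}}^{2}-\|\mathbf{w}-\mathbf{w}^{k}\|_{\mathbf{H}_{k}}^{2}\big)$ plus the cross contribution $\tfrac{1}{2}\big(\|\bar{\mathbf{w}}^{k}-\mathbf{w}^{k}\|_{\mathbf{H}_{k}}^{2}-\|\bar{\mathbf{w}}^{k}-\mathbf{w}^{k+1}\|_{\mathbf{H}_{k}}^{2}\big)$. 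Since~\eqref{sec3-eq8} is a pure identity valid for any symmetric matrix, it carries over to the block setting without change.

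The next step is to collapse the cross contribution into a single $\mathbf{G}_{k}$-norm. Writing $\bar{\mathbf{w}}^{k}-\mathbf{w}^{k+1}=(\mathbf{I}-\mathbf{M}_{k})(\bar{\mathbf{w}}^{k}-\mathbf{w}^{k})$, which again follows from~\eqref{sec4-eq11}, I would expand $\|(\mathbf{I}-\mathbf{M}_{k})(\bar{\mathbf{w}}^{k}-\mathbf{w}^{k})\|_{\mathbf{H}_{k}}^{2}$ and cancel the common $\mathbf{H}_{k}$ quadratic. Using that the scalar $(\bar{\mathbf{w}}^{k}-\mathbf{w}^{k})^{\top}\mathbf{H}_{k}\mathbf{M}_{k}(\bar{\mathbf{w}}^{k}-\mathbf{w}^{k})$ equals its own transpose and substituting $\mathbf{H}_{k}\mathbf{M}_{k}=\mathbf{Q}_{k}$, the residual matrix becomes exactly $\mathbf{Q}_{k}^{\top}+\mathbf{Q}_{k}-\mathbf{M}_{k}^{\top}\mathbf{H}_{k}\mathbf{M}_{k}=\mathbf{G}_{k}$, so the cross contribution is $\|\bar{\mathbf{w}}^{k}-\mathbf{w}^{k}\|_{\mathbf{G}_{k}}^{2}$. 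Inserting the two pieces back into~\eqref{sec4-eq10} yields~\eqref{sec4-eq14}.

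The only genuinely new work relative to Lemma~\ref{sec3-lemma2} is the block bookkeeping, which I expect to be the main (though routine) obstacle. I must verify that the $3\times3$ upper-triangular products $\mathbf{H}_{k}\mathbf{M}_{k}$ and $\mathbf{M}_{k}^{\top}\mathbf{H}_{k}\mathbf{M}_{k}$ reproduce exactly the $\mathbf{Q}_{k}$ and $\mathbf{G}_{k}$ tabulated in Section~\ref{section4}, so that the off-diagonal blocks carrying $\mathcal{D}\Phi(\bar{\mathbf{x}}^{k})$ and $\mathcal{D}\Psi(\bar{\mathbf{y}}^{k})$ cancel and $\mathbf{G}_{k}$ becomes block-diagonal in the same manner as the $2\times2$ case. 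I emphasize that every step above is an exact equality, so nothing is discarded; the positive-definiteness of $\mathbf{H}_{k}$ and $\mathbf{G}_{k}$ recorded in~\eqref{sec4-eq13} is not needed to derive~\eqref{sec4-eq14} itself but is stated because the subsequent convergence theorems rely on it. Apart from this transcription, the argument is identical to the single-variable proof.
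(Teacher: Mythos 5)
Your proposal is correct and follows exactly the route the paper intends: the paper omits this proof with the remark that it is ``similar to that of Lemma~\ref{sec3-lemma2},'' and your argument is precisely that proof --- substituting $\mathbf{Q}_{k}=\mathbf{H}_{k}\mathbf{M}_{k}$ and the correction rule into the proximal term, applying the four-point identity~\eqref{sec3-eq8}, and collapsing the cross terms into $\|\bar{\mathbf{w}}^{k}-\mathbf{w}^{k}\|_{\mathbf{G}_{k}}^{2}$ --- transcribed to the $3\times3$ block matrices. Your observation that every step is an exact identity and that positive-definiteness is only needed downstream is also consistent with the single-variable proof.
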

\begin{proof}
The proof is omitted since it is similar to that of Lemma \ref{sec3-lemma2}.
\end{proof}

\begin{theorem}
Let $\{\mathbf{w}^{k},\bar{\mathbf{w}}^{k},\mathbf{w}^{k+1}\}$ be the sequences generated by the \textsf{Spice} method. For the predictive matrix $\mathbf{Q}_{k}$, if there is a corrective matrix $\mathbf{M}_{k}$ that satisfies the convergence condition (\ref{sec4-eq14}), then we have
\begin{equation}
\Vert\mathbf{w}^{*}-\mathbf{w}^{k}\Vert_{\mathbf{H}_{k}}^{2}\ge\Vert\mathbf{w}^{*}-\mathbf{w}^{k+1}\Vert_{\mathbf{H}_{k}}^{2}+\Vert\mathbf{w}^{k}-\bar{\mathbf{w}}^{k}\Vert_{\mathbf{G}_{k}}^{2}, \quad \ \mathbf{w}^{*}\in \Omega^{*}, \label{sec4-eq15}
\end{equation}
where $\Omega^{*}$ is the set of optimal solutions.
\end{theorem}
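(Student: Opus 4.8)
The plan is to mirror the argument used for Theorem \ref{sec3-theorem1} in the single-variable setting, since the separable problem differs only in bookkeeping: the scalar objective $f(\mathbf{x})$ is replaced by the separable objective $\vartheta(\mathbf{u})=f(\mathbf{x})+g(\mathbf{y})$, the monotonicity of $\boldsymbol{\Gamma}$ is now furnished by Lemma \ref{sec4-lemma1} rather than Lemma \ref{sec2-lemma2}, and the optimality condition is the variational inequality \eqref{sec4-eq4}. The hypotheses of Lemma \ref{sec4-lemma2} hold by the parameter choices \eqref{sec4-eq12} together with \eqref{sec4-add13}, so the variational inequality \eqref{sec4-eq14} is available as the starting point.

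First I would specialize \eqref{sec4-eq14} at $\mathbf{w}=\mathbf{w}^{*}$ and transpose the three quadratic terms. Using $(\mathbf{w}^{*}-\bar{\mathbf{w}}^{k})^{\top}\boldsymbol{\Gamma}(\bar{\mathbf{w}}^{k})=-(\bar{\mathbf{w}}^{k}-\mathbf{w}^{*})^{\top}\boldsymbol{\Gamma}(\bar{\mathbf{w}}^{k})$, this rearranges to
\begin{equation}
\Vert\mathbf{w}^{*}-\mathbf{w}^{k}\Vert_{\mathbf{H}_{k}}^{2}-\Vert\mathbf{w}^{*}-\mathbf{w}^{k+1}\Vert_{\mathbf{H}_{k}}^{2}-\Vert\mathbf{w}^{k}-\bar{\mathbf{w}}^{k}\Vert_{\mathbf{G}_{k}}^{2}\ge 2\Big\{\rho[\vartheta(\bar{\mathbf{u}}^{k})-\vartheta(\mathbf{u}^{*})]+(\bar{\mathbf{w}}^{k}-\mathbf{w}^{*})^{\top}\tfrac{1}{\eta_{k}}\boldsymbol{\Gamma}(\bar{\mathbf{w}}^{k})\Big\}. \nonumber
\end{equation}
It therefore remains only to show that the bracketed right-hand side is nonnegative.

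Next I would lower-bound the operator term. By the monotonicity of $\boldsymbol{\Gamma}$ (Lemma \ref{sec4-lemma1}), $(\bar{\mathbf{w}}^{k}-\mathbf{w}^{*})^{\top}\boldsymbol{\Gamma}(\bar{\mathbf{w}}^{k})\ge(\bar{\mathbf{w}}^{k}-\mathbf{w}^{*})^{\top}\boldsymbol{\Gamma}(\mathbf{w}^{*})$, so the right-hand side is at least $2\{\rho[\vartheta(\bar{\mathbf{u}}^{k})-\vartheta(\mathbf{u}^{*})]+(\bar{\mathbf{w}}^{k}-\mathbf{w}^{*})^{\top}\tfrac{1}{\eta_{k}}\boldsymbol{\Gamma}(\mathbf{w}^{*})\}$. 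Since $\mathbf{w}^{*}$ is a saddle point, scaling \eqref{sec4-eq4} by $\rho$ and $1/\eta_{k}$ exactly as in Lemma \ref{sec2-lemma3} gives a scaled optimality inequality valid for every $\mathbf{w}\in\Omega$; evaluating it at $\mathbf{w}=\bar{\mathbf{w}}^{k}$ yields $\rho[\vartheta(\bar{\mathbf{u}}^{k})-\vartheta(\mathbf{u}^{*})]+(\bar{\mathbf{w}}^{k}-\mathbf{w}^{*})^{\top}\tfrac{1}{\eta_{k}}\boldsymbol{\Gamma}(\mathbf{w}^{*})\ge0$. Substituting back establishes \eqref{sec4-eq15}.

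The argument carries no genuine obstacle; the only point requiring care is the legitimacy of applying the scaled optimality inequality at $\bar{\mathbf{w}}^{k}$, which needs $\bar{\mathbf{w}}^{k}\in\Omega$. I would note that this membership is automatic because each subproblem in \eqref{sec4-eq8} minimizes or maximizes over the respective closed convex set $\mathcal{X}$, $\mathcal{Y}$, or $\mathcal{Z}$, so that $\bar{\mathbf{w}}^{k}=(\bar{\mathbf{x}}^{k},\bar{\mathbf{y}}^{k},\bar{\boldsymbol{\lambda}}^{k})\in\Omega=\mathcal{X}\times\mathcal{Y}\times\mathcal{Z}$. Everything else is the same algebraic rearrangement as in the proof of Theorem \ref{sec3-theorem1}, so the proof may reasonably be stated compactly by reference to that earlier argument.
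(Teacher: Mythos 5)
Your proposal is correct and follows exactly the route the paper intends: the paper omits this proof as "similar to that of Theorem \ref{sec3-theorem1}", and your argument — specializing \eqref{sec4-eq14} at $\mathbf{w}=\mathbf{w}^{*}$, invoking the monotonicity of $\boldsymbol{\Gamma}$ from Lemma \ref{sec4-lemma1}, and closing with the scaled optimality inequality at $\bar{\mathbf{w}}^{k}\in\Omega$ — is precisely that adaptation. Your explicit remark on why $\bar{\mathbf{w}}^{k}\in\Omega$ is a small but welcome addition the paper leaves implicit.
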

\begin{proof}
The proof is omitted since it is similar to that of Theorem \ref{sec3-theorem1}.
\end{proof}

\begin{theorem}
The $\{\mathbf{w}^k,\bar{\mathbf{w}}^k\}$ the sequences generated by the \textsf{Spice} method. For the given optimal solution $\mathbf{w}^{*}$, the following limit equations hold
\begin{equation}
\lim_{k \to \infty} \|\mathbf{w}^k - \bar{\mathbf{w}}^k\|^2 = 0 \quad {\rm and}\quad \lim_{k \to \infty}\mathbf{w}^{k}=\mathbf{w}^{*}.\label{sec3-eq12}
\end{equation}
\end{theorem}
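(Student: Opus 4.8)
The plan is to replicate the Fej\'er-monotonicity argument of Theorem~\ref{sec3-theorem2} almost verbatim, since the separable structure of \textsf{P}2 enters only through the block forms of $\mathbf{H}_{k}$, $\mathbf{G}_{k}$ and $\boldsymbol{\Gamma}$ while the abstract contraction machinery is unchanged. First I would combine the monotonicity $\mathbf{H}_{k+1}\preceq\mathbf{H}_{k}$ (which holds because $\mathbf{D}_{k}=\mathbf{H}_{k}-\mathbf{H}_{k+1}\succeq0$ under the parameter choice \eqref{sec4-eq12} and the scaling rule \eqref{sec4-add13}) with the contraction inequality \eqref{sec4-eq15} to obtain
\begin{equation}
\|\mathbf{w}^{k+1}-\mathbf{w}^{*}\|_{\mathbf{H}_{k+1}}^{2}\le\|\mathbf{w}^{k}-\mathbf{w}^{*}\|_{\mathbf{H}_{k}}^{2}-\|\mathbf{w}^{k}-\bar{\mathbf{w}}^{k}\|_{\mathbf{G}_{k}}^{2}.\nonumber
\end{equation}
Telescoping from $0$ to $k$ and discarding the nonnegative trailing term $\|\mathbf{w}^{k+1}-\mathbf{w}^{*}\|_{\mathbf{H}_{k+1}}^{2}$ yields the summability bound $\sum_{t=0}^{\infty}\|\mathbf{w}^{t}-\bar{\mathbf{w}}^{t}\|_{\mathbf{G}_{t}}^{2}\le\|\mathbf{w}^{0}-\mathbf{w}^{*}\|_{\mathbf{H}_{0}}^{2}<\infty$, so that $\|\mathbf{w}^{k}-\bar{\mathbf{w}}^{k}\|_{\mathbf{G}_{k}}^{2}\to0$.

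The genuinely delicate step is passing from the $\mathbf{G}_{k}$-weighted residual to the Euclidean residual, because $\mathbf{G}_{k}$ depends on the iterates through $\mathsf{R}(\bar{\mathbf{u}}^{k})$ and the blocks $r_{k},s_{k}$ are only known to be non-increasing, so a priori $\mathbf{G}_{k}$ could degenerate. I would close this gap by first invoking the same contraction inequality to conclude that $\{\mathbf{w}^{k}\}$ is bounded in $\mathbf{H}_{0}$-norm, hence bounded, and therefore $\{\bar{\mathbf{w}}^{k}\}$ is bounded too. Since $\Phi,\Psi$ are continuously differentiable, $\mathsf{R}(\mathbf{u}^{k})$ and $\mathsf{R}(\bar{\mathbf{u}}^{k})$ then stay in a compact range, which bounds the smallest eigenvalue of $\mathbf{G}_{k}$ below by some $c>0$; consequently $c\|\mathbf{w}^{k}-\bar{\mathbf{w}}^{k}\|^{2}\le\|\mathbf{w}^{k}-\bar{\mathbf{w}}^{k}\|_{\mathbf{G}_{k}}^{2}\to0$, giving $\|\mathbf{w}^{k}-\bar{\mathbf{w}}^{k}\|\to0$.

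For the convergence $\mathbf{w}^{k}\to\mathbf{w}^{*}$ I would use a subsequence argument in place of the informal limit of the single-variable proof. Boundedness supplies a subsequence $\mathbf{w}^{k_{j}}\to\mathbf{w}^{\infty}$, and since $\|\mathbf{w}^{k_{j}}-\bar{\mathbf{w}}^{k_{j}}\|\to0$ the matched subsequence $\bar{\mathbf{w}}^{k_{j}}$ has the same limit. Passing to the limit in \eqref{sec4-eq10}, the proximal term vanishes (as $\mathbf{Q}_{k_{j}}$ stays bounded while $\mathbf{w}^{k_{j}}-\bar{\mathbf{w}}^{k_{j}}\to0$), and using lower semicontinuity of the proper closed convex $\vartheta$ together with continuity of $\boldsymbol{\Gamma}$ shows $\mathbf{w}^{\infty}$ satisfies \eqref{sec4-eq4}, i.e. $\mathbf{w}^{\infty}\in\Omega^{*}$. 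Finally I would re-apply the contraction inequality with $\mathbf{w}^{*}=\mathbf{w}^{\infty}$: the quantity $\|\mathbf{w}^{k}-\mathbf{w}^{\infty}\|_{\mathbf{H}_{k}}^{2}$ is non-increasing and admits a subsequence tending to $0$, so the whole sequence converges to $\mathbf{w}^{\infty}=\mathbf{w}^{*}$. The main obstacle is thus not the telescoping but establishing the uniform positive-definiteness of $\mathbf{G}_{k}$ and the accompanying boundedness of the iterates, details the single-variable proof suppressed.
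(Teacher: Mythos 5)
Your proposal follows the same route the paper intends: the paper omits this proof by reference to Theorem \ref{sec3-theorem2}, and your combination of the contraction inequality \eqref{sec4-eq15} with the monotonicity $\mathbf{H}_{k+1}\preceq\mathbf{H}_{k}$, followed by telescoping to obtain summability of $\|\mathbf{w}^{k}-\bar{\mathbf{w}}^{k}\|_{\mathbf{G}_{k}}^{2}$, is exactly that argument transplanted to the separable setting. Where you go beyond the paper is in making rigorous the two steps Theorem \ref{sec3-theorem2} leaves informal: the passage from the $\mathbf{G}_{k}$-weighted residual to the Euclidean residual (which the paper justifies only by remarking that $\mathbf{G}_{k}$ ``depends on $r_{k},s_{k}$''), and the final convergence $\mathbf{w}^{k}\to\mathbf{w}^{*}$, for which your subsequence-plus-Fej\'er argument is the standard correct completion of the paper's one-line assertion that the limit of \eqref{sec4-eq10} forces optimality. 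One caveat: you assert the iterates are ``bounded in $\mathbf{H}_{0}$-norm, hence bounded,'' but the telescoped inequality only controls $\|\mathbf{w}^{k}-\mathbf{w}^{*}\|_{\mathbf{H}_{k}}^{2}$ in the \emph{varying} norm, and since $r_{k},s_{k}$ are merely non-increasing they could in principle decay to zero (e.g.\ if $\eta_{k}\to\infty$ faster than $\sqrt{\mathsf{R}(\mathbf{u}^{k})}$ grows); Euclidean boundedness, and with it the uniform lower eigenvalue bound on $\mathbf{G}_{k}$ that your argument hinges on, therefore still requires $\inf_{k}\min\{r_{k},s_{k}\}>0$. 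This is a gap you inherit from, rather than introduce into, the paper's own reasoning, and your writeup at least names it explicitly where the paper does not.
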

\begin{proof}
The proof is omitted since it is similar to that of Theorem \ref{sec3-theorem2}.
\end{proof}

\begin{theorem}
Let $\{\bar{\mathbf{x}}^{k}\}$ be the sequence generated by  the \textsf{Spice} method for \textsf{P2} and $\eta_{k}$ satisfy condition (\ref{sec4-add13}). Let $\bar{\mathbf{u}}_{t}$, $\eta_{t}$, and $\bar{\mathbf{w}}_{t}$ be defined as follows
\begin{equation}
\bar{\mathbf{u}}_{t}=\frac{1}{t+1}\sum_{k=0}^{t}\bar{\mathbf{u}}^{k},\quad \eta_{t}=\frac{t+1}{\sum_{k=0}^{t}\frac{1}{\eta_{k}}}, \quad \bar{\mathbf{w}}_{t}=\frac{\sum_{k=0}^{t}\frac{\bar{\mathbf{w}}^{k}}{\eta_{k}}}{\sum_{k=0}^{t}\frac{1}{\eta_{k}}}.\nonumber
\end{equation}
If Lemma \ref{sec4-lemma2} holds, for any iteration $t>0$ and scaling factor $\rho(t)>0$, the error bound satisfies
\begin{equation}
\mathbf{w}^{*}\in \Omega,\quad \vartheta(\bar{\mathbf{u}}_{t})-\vartheta(\mathbf{u}^{*})+(\bar{\mathbf{w}}_{t}-\mathbf{w}^{*})^{\top}\frac{1}{\eta_{t}\rho(t)}\boldsymbol{\Gamma}(\mathbf{w}^{*})\leq \frac{1}{2\eta_0 \rho(t)(t+1)}\Vert\mathbf{w}^{*}-\mathbf{w}^{0}\Vert_{\mathsf{H}_{0}}^{2}, \quad\forall \ \bar{\mathbf{w}}_{t}\in \Omega. \label{sec4-eq17}
\end{equation}
\end{theorem}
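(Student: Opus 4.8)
The plan is to mirror the argument of Theorem \ref{sec3-theorem3}, now working with the three-block operator $\boldsymbol{\Gamma}$ of \eqref{sec4-eq5} and the coupled objective $\vartheta(\mathbf{u})=f(\mathbf{x})+g(\mathbf{y})$. First I would invoke Lemma \ref{sec4-lemma2}, whose inequality \eqref{sec4-eq14} supplies, for each iteration $k$, a lower bound on $\rho[\vartheta(\mathbf{u})-\vartheta(\bar{\mathbf{u}}^{k})]+(\mathbf{w}-\bar{\mathbf{w}}^{k})^{\top}\tfrac{1}{\eta_{k}}\boldsymbol{\Gamma}(\bar{\mathbf{w}}^{k})$ in terms of the $\mathbf{H}_{k}$-gap and the nonnegative residual $\tfrac12\|\bar{\mathbf{w}}^{k}-\mathbf{w}^{k}\|_{\mathbf{G}_{k}}^{2}$. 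Using the three-block monotonicity of $\boldsymbol{\Gamma}$ from Lemma \ref{sec4-lemma1} to replace $\boldsymbol{\Gamma}(\bar{\mathbf{w}}^{k})$ by $\boldsymbol{\Gamma}(\mathbf{w})$, and discarding the $\mathbf{G}_{k}$-term (which is licit since \eqref{sec4-eq13} gives $\mathbf{G}_{k}\succ0$), yields the per-iteration estimate
\[
\rho[\vartheta(\mathbf{u})-\vartheta(\bar{\mathbf{u}}^{k})]+(\mathbf{w}-\bar{\mathbf{w}}^{k})^{\top}\tfrac{1}{\eta_{k}}\boldsymbol{\Gamma}(\mathbf{w})+\tfrac12\|\mathbf{w}-\mathbf{w}^{k}\|_{\mathbf{H}_{k}}^{2}\ge\tfrac12\|\mathbf{w}-\mathbf{w}^{k+1}\|_{\mathbf{H}_{k}}^{2},
\]
the exact separable analogue of \eqref{sec3-eq18}.

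Next I would sum this estimate over $k=0,\dots,t$ and divide by $t+1$. The term linear in $\boldsymbol{\Gamma}$ collapses cleanly under the weighted averages: since $\sum_{k=0}^{t}\tfrac{1}{\eta_k}=\tfrac{t+1}{\eta_t}$ and $\sum_{k=0}^{t}\tfrac{\bar{\mathbf{w}}^{k}}{\eta_k}=\tfrac{t+1}{\eta_t}\bar{\mathbf{w}}_{t}$ by the definitions of $\eta_t$ and $\bar{\mathbf{w}}_{t}$, the averaged term equals $\tfrac{1}{\eta_t}(\mathbf{w}-\bar{\mathbf{w}}_{t})^{\top}\boldsymbol{\Gamma}(\mathbf{w})$. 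For the objective part, convexity of $f$ and $g$ gives the Jensen bound $\vartheta(\bar{\mathbf{u}}_{t})\le\tfrac{1}{t+1}\sum_{k=0}^{t}\vartheta(\bar{\mathbf{u}}^{k})$, which (since $\rho(t)>0$) turns the averaged objective gap into $\rho(t)[\vartheta(\mathbf{u})-\vartheta(\bar{\mathbf{u}}_{t})]$. Setting $\mathbf{w}=\mathbf{w}^{*}$ and rearranging then isolates exactly the left-hand side of \eqref{sec4-eq17}.

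The main obstacle is the telescoping of the iteration-dependent $\mathbf{H}_{k}$-norms, because the subtracted term carries $\mathbf{H}_{k}$ rather than $\mathbf{H}_{k+1}$, so $\sum_{k=0}^{t}\big[\|\mathbf{w}^{*}-\mathbf{w}^{k}\|_{\mathbf{H}_{k}}^{2}-\|\mathbf{w}^{*}-\mathbf{w}^{k+1}\|_{\mathbf{H}_{k}}^{2}\big]$ is not a clean telescope. Here I would use that condition \eqref{sec4-add13} forces $\{r_k,s_k\}$ to be non-increasing, hence $\mathbf{D}_{k}=\mathbf{H}_{k}-\mathbf{H}_{k+1}\succeq0$; this yields $\|\mathbf{w}^{*}-\mathbf{w}^{k+1}\|_{\mathbf{H}_{k}}^{2}\ge\|\mathbf{w}^{*}-\mathbf{w}^{k+1}\|_{\mathbf{H}_{k+1}}^{2}$, so the sum telescopes up to the nonnegative correction $\sum_{k}\|\mathbf{w}^{*}-\mathbf{w}^{k+1}\|_{\mathbf{D}_{k}}^{2}\ge0$ and is therefore dominated by $\|\mathbf{w}^{*}-\mathbf{w}^{0}\|_{\mathbf{H}_{0}}^{2}$. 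Finally, substituting $\mathbf{H}_{0}=\tfrac{1}{\eta_0}\mathsf{H}_{0}$ and dividing through by $\rho(t)$ produces the stated bound \eqref{sec4-eq17}. Since every step is structurally identical to the single-variable case, the argument goes through once the three-block monotonicity of Lemma \ref{sec4-lemma1} and the positive-definiteness \eqref{sec4-eq13} are in place, which is why the authors reduce it to the proof of Theorem \ref{sec3-theorem3}.
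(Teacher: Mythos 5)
Your proposal is correct and follows essentially the same route as the paper, which simply defers to the proof of Theorem \ref{sec3-theorem3}: derive the separable analogue of \eqref{sec3-eq18} from Lemma \ref{sec4-lemma2} and the monotonicity of $\boldsymbol{\Gamma}$, sum and average, apply Jensen's inequality to $\vartheta$, set $\mathbf{w}=\mathbf{w}^{*}$, and control the non-telescoping $\mathbf{H}_{k}$-norms via $\mathbf{D}_{k}\succeq0$ before substituting $\mathbf{H}_{0}=\tfrac{1}{\eta_0}\mathsf{H}_{0}$ and dividing by $\rho(t)$. Your handling of the weighted averages and the telescoping correction is, if anything, slightly more explicit than the paper's.
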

\begin{proof}
The proof is omitted since it is similar to that of Theorem \ref{sec3-theorem3}.
\end{proof}

\begin{corollary}
The \textsf{Spice} method provides flexibility in controlling the convergence rate through the choice of the scaling factor $\rho(t)$. For example, setting $\rho(t) = (t+1)^{\alpha}$ with $\alpha > 0$ yields a power-law convergence rate, while $\rho(t) = e^{\beta t}$ with $\beta > 0$ achieves exponential convergence. Additionally, $\rho(t) = (t+1)^{t}$ combines both power-law and exponential effects. This adaptability ensures that the method can tailor its convergence behavior to different nonlinear convex optimization problems, converging asymptotically as $\rho(t)$ increases.
\end{corollary}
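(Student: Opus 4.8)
The plan is to obtain every claimed rate as a direct specialization of the ergodic error bound \eqref{sec4-eq17} established in the preceding theorem, so that no fresh estimate is needed. First I would convert the combined primal--dual gap in \eqref{sec4-eq17} into a pure objective-value bound. Since $\bar{\mathbf{w}}_{t}\in\Omega$ is a convex combination of feasible predictors and $\mathbf{w}^{*}$ solves the variational inequality \eqref{sec4-eq4}, the optimality condition yields $(\bar{\mathbf{w}}_{t}-\mathbf{w}^{*})^{\top}\boldsymbol{\Gamma}(\mathbf{w}^{*})\ge 0$; as $\eta_{t}>0$ and $\rho(t)>0$, the dual term on the left of \eqref{sec4-eq17} is nonnegative and can be discarded, giving
\[
\vartheta(\bar{\mathbf{u}}_{t})-\vartheta(\mathbf{u}^{*})\le\frac{1}{2\eta_{0}\rho(t)(t+1)}\Vert\mathbf{w}^{*}-\mathbf{w}^{0}\Vert_{\mathsf{H}_{0}}^{2}.
\]

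The second step records that the right-hand constant is genuinely independent of $t$. Recalling $\eta_{0}=1$ and that $\mathsf{H}_{0}$ is fixed by the initial data $\mathsf{R}(\mathbf{u}^{0})$ and $\mathsf{R}(\bar{\mathbf{u}}^{0})$ alone (it does not involve the adaptive factors $\eta_{k}$), the quantity $C:=\tfrac{1}{2}\Vert\mathbf{w}^{*}-\mathbf{w}^{0}\Vert_{\mathsf{H}_{0}}^{2}$ is a constant, and the bound reads $\vartheta(\bar{\mathbf{u}}_{t})-\vartheta(\mathbf{u}^{*})\le C/[\rho(t)(t+1)]$, i.e. an $\mathcal{O}(1/[\rho(t)(t+1)])$ rate independent of the schedule.

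Third, I would substitute the three prescribed scaling schedules. Taking $\rho(t)=(t+1)^{\alpha}$ gives $C/(t+1)^{1+\alpha}$, the power-law rate $\mathcal{O}(1/(t+1)^{1+\alpha})$; taking $\rho(t)=e^{\beta t}$ gives $C/[e^{\beta t}(t+1)]$, the exponential rate $\mathcal{O}(1/[e^{\beta t}(t+1)])$; and taking $\rho(t)=(t+1)^{t}$ gives $C/[(t+1)^{t}(t+1)]=C/(t+1)^{t+1}$, the power-exponential rate $\mathcal{O}(1/(t+1)^{t+1})$. Letting $\rho(t)\to\infty$ in any of these forces $\vartheta(\bar{\mathbf{u}}_{t})-\vartheta(\mathbf{u}^{*})\to 0_{+}$, which is the claimed asymptotic convergence.

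There is no serious analytic obstacle here, since the corollary is a plug-in consequence of \eqref{sec4-eq17}; the only points requiring care are the two structural facts invoked above — that the left-hand dual term is nonnegative (so the combined gap legitimately upper-bounds the objective gap) and that $\mathsf{H}_{0}$, unlike $\mathbf{H}_{0}$, carries no dependence on the iteration-dependent $\eta_{k}$ (so the leading constant does not secretly grow with $t$ and dilute the advertised rate). Verifying these two independence and sign properties is the entirety of the work; the three rate computations are then immediate arithmetic.
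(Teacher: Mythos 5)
Your proposal is correct and follows essentially the same route the paper intends: the corollary is stated without its own proof as a direct plug-in of the three schedules $\rho(t)=(t+1)^{\alpha}$, $e^{\beta t}$, $(t+1)^{t}$ into the ergodic bound \eqref{sec4-eq17}, exactly mirroring the first corollary after Theorem \ref{sec3-theorem3}, and your observations that $\mathsf{H}_{0}$ is independent of the $\eta_{k}$ and that the constant $C$ is fixed are the same structural facts the paper relies on. The only point to watch is your claim that $(\bar{\mathbf{w}}_{t}-\mathbf{w}^{*})^{\top}\boldsymbol{\Gamma}(\mathbf{w}^{*})\ge 0$ follows from the variational inequality \eqref{sec4-eq4} — that inequality only guarantees nonnegativity of the \emph{sum} with the objective gap, not of the dual term alone (the $\boldsymbol{\lambda}$-block is nonnegative by feasibility and complementary slackness, but the $\mathbf{x}$-block $\sum_{i}\lambda_{i}^{*}(\mathbf{x}-\mathbf{x}^{*})^{\top}\nabla\phi_{i}(\mathbf{x}^{*})$ need not be); the paper makes the same unproved assertion in its Remark 2.1, so your argument is faithful to the paper, but strictly speaking the clean conclusion is a rate for the combined primal--dual gap rather than for $\vartheta(\bar{\mathbf{u}}_{t})-\vartheta(\mathbf{u}^{*})$ in isolation.
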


\subsection{Solving Convex Problems with Equality Constraints}
The \textsf{Spice} method can be extended to address convex problems involving nonlinear inequality and linear equation constraints. The convex problem is formulated as follows:
\begin{equation}
\begin{aligned}
\min\left\{ f(\mathbf{x})+g(\mathbf{y}) \ \middle|\   \phi_{i}(\mathbf{x}) +\psi_{i}(\mathbf{y})\leq 0,\ \mathbf{A}\mathbf{x}+\mathbf{B}\mathbf{y}=\mathbf{b},\ \mathbf{x}\in \mathcal{X},\ \mathbf{y}\in \mathcal{Y}, \ i=1,\cdots,p_{1}\right\},
\end{aligned}\nonumber
\end{equation}
where $\mathbf{A}\in \mathbb{R}^{p_{2}\times n}$, $\mathbf{B}\in \mathbb{R}^{ p_{2}\times m}$, and $\mathbf{b}\in \mathbb{R}^{ p_{2}}$ ($p_{1}+p_{2}=:p$).
 The constraint functions are structured as  $\Phi(\mathbf{x})=[\phi_{1}(\mathbf{x}),\ldots,\phi_{p_{1}}(\mathbf{x}), \phi_{p_{1}+1}(\mathbf{x}),\cdots,\phi_{p}(\mathbf{x})]^{\top}$, with $[\phi_{p_{1}+1}(\mathbf{x}),\cdots,\phi_{p}(\mathbf{x})]^{\top}=\mathbf{A}\mathbf{x}-\mathbf{b}/2$, and $\Psi(\mathbf{y})=[\psi_{1}(\mathbf{y}),\cdots,\psi_{p_{1}}(\mathbf{y}),\psi_{p_{1}+1}(\mathbf{y}),\cdots,\psi_{p}(\mathbf{y})]^{\top}$, with $[\psi_{p_{1}+1}(\mathbf{y}),\cdots,\psi_{p}(\mathbf{y})]^{\top}=\mathbf{B}\mathbf{y}-\mathbf{b}/2$. The domain of the dual variable $\boldsymbol{\lambda}$ must be redefined to account for these constraints. Specifically, the domain set of $\boldsymbol{\lambda}$ becomes $\mathcal{Z} := \mathbb{R}^{p_{1}}_{+} \times \mathbb{R}^{p_{2}}$, where $\mathbb{R}^{p_{1}}_{+}$ handles the non-positivity constraints and $\mathbb{R}^{p_{2}}$ corresponds to the equality constraints.

\section{Numerical Experiment}

To evaluate the performance of the \textsf{Spice} method, this paper examines quadratically constrained quadratic programming (QCQP) problems that commonly arise in control theory and signal processing. The QCQP structure allows for closed-form solutions at each iteration. Single-variable and separable-variable QCQP problems involving nonlinear objective and constraint functions are considered. This approach ensures a comprehensive evaluation of the method’s performance across different scenarios.

 \subsection{Single-Variable QCQP}
In single-variable QCQP, the objective and constraints are quadratic functions, which can be described as:
\begin{align}
\min_{\mathbf{x}} \quad f(\mathbf{x})=&\|\mathbf{W}_0\mathbf{x} - \mathbf{a}_0\|^2  \nonumber\\
\text{subject to} \quad & \|\mathbf{W}_i \mathbf{x} - \mathbf{a}_i\|^2 \leq \pi_{i},\quad i =1,\cdots,p, \nonumber
\end{align}
 where $\mathbf{x}\in \mathbb{R}^{n}$ and $\mathbf{W}_{i}\in \mathbb{R}^{q\times n}$ and $\mathbf{a}_{i}\in \mathbb{R}^{q}, i=0,1,\cdots,p$.  
\subsubsection{PPA-Like Prediction Scheme}
In the first step, the PPA-like prediction scheme updates primal and dual variables by solving the corresponding subproblems. 
\begin{subequations}
\begin{empheq}[left=\empheqlbrace]{alignat=2}
 \bar{\mathbf{x}}^{k} &=\arg\min \big\{ \mathcal{L}(\mathbf{x},\boldsymbol{\lambda}^{k},\rho,\eta_{k}) + \frac{r_{k}}{2}\|\mathbf{x}-\mathbf{x}^k\|^2 \mid \mathbf{x}\in \mathcal{X} \big\},\nonumber\\[0.1cm]
     \boldsymbol{\bar{\lambda}}^{k} &= \arg\max \big\{ \mathcal{L}(\bar{\mathbf{x}}^{k}, \boldsymbol{\lambda},\rho,\eta_{k}) - \frac{s_{k}}{2}\|\boldsymbol{\lambda}-\boldsymbol{\lambda}^k\|^2 \mid  \boldsymbol{\lambda}\in \mathcal{Z} \big\},\nonumber
\end{empheq}
\end{subequations}
By solving the above subproblems, we obtain
\begin{empheq}[]{alignat=2}
&\bar{\mathbf{x}}^{k}=\left(2\rho\mathbf{W}_{0}^{\top}\mathbf{W}_{0}+\frac{2}{\eta_{k}}\sum_{i=1}^{p} \lambda^{k}_{i}\mathbf{W}^{\top}_{i}\mathbf{W}_{i}+r_{k}\mathbf{I}_{n}\right)^{-1}\cdot\left(2\rho\mathbf{W}_{0}^{\top}\mathbf{a}_{0}+\frac{2}{\eta_{k}}\sum_{i=1}^{p} \lambda^{k}_{i}\mathbf{W}^{\top}_{i}\mathbf{a}_{i}+r_{k}\mathbf{x}^{k}\right),\nonumber\\
&\bar{\lambda}^{k}_{i}=\max\{\hat{\lambda}^{k}_{i},0\}, \mbox{ where } \hat{\lambda}^{k}_{i}= \lambda^{k}_{i}+\frac{1}{\eta_{k} s_{k}}\left(\Vert\mathbf{W}_{i}\bar{\mathbf{x}}^{k}-\mathbf{a}_{i}\Vert^{2}- \pi_{i}\right), \quad i=1,\cdots,p.\nonumber
\end{empheq}
\subsubsection{Matrix-Driven Correction Scheme}
In the second step, the prediction error is corrected by an upper triangular matrix. The iterative scheme of the variables is given by:
\begin{equation}
\mathbf{w}^{k+1} =\mathbf{w}^{k}- \mathbf{M}_{k}(\mathbf{w}^{k}-\bar{\mathbf{w}}^{k}),\nonumber
\end{equation}
where the corrective matrix $\mathbf{M}_{k}$ is given as 
\begin{equation}
\mathbf{M}_{k}=\left( \begin{array}{cc}\mathbf{I}_{n} &-\frac{1}{\eta_k r_{k}}\mathcal{D} \Phi (\bar{\mathbf{x}}^{k})^{\top}\\[0.1cm]
        \mathbf{0} & \mathbf{I}_{p}\end{array} \right),\nonumber 
\end{equation}
where $\mathcal{D} \Phi (\mathbf{x})=[\nabla \phi_{1},\cdots,\nabla \phi_{p}]^{\top}\in \mathbb{R}^{p\times n}$. Let $\mathsf{R}(\mathbf{x}^{k})=\|\mathcal{D} \Phi (\mathbf{x}^{k})\|^{2}$ and the regularization parameters $r_{k}$ and $s_{k}$ are defined as follows: 
\begin{equation}
r_{k}=\frac{1}{\eta_k}\sqrt{\mathsf{R}(\mathbf{x}^{k})},\quad
s_{k}=\frac{\mu\mathsf{R}(\bar{\mathbf{x}}^{k})}{\eta_k\sqrt{\mathsf{R}(\mathbf{x}^{k})}}.\nonumber
\end{equation}
To ensure that $r_{k-1} \geq r_{k}$ and $s_{k-1} \geq s_{k}$, the scaling factor $\eta_k$ must be satisfied the following condition. 
\begin{equation}
\eta_k \ge \max \left\{\eta_{k-1} \cdot \sqrt{\frac{\mathsf{R}(\mathbf{x}^{k})}{\mathsf{R}(\mathbf{x}^{k-1})}},\quad \eta_{k-1} \cdot \frac{\mathsf{R}(\bar{\mathbf{x}}^{k}) \sqrt{\mathsf{R}(\mathbf{x}^{k-1})}}{\mathsf{R}(\bar{\mathbf{x}}^{k-1}) \sqrt{\mathsf{R}(\mathbf{x}^{k})}} \right\}.\nonumber
\end{equation}
 
  \subsection{Separable-Variable QCQP}

For the separable-variable QCQP, the problem structure is extended to consider two sets of variables, $\mathbf{x}$ and $\mathbf{y}$, which are optimized independently within a coupled quadratic framework. The optimization problem is formulated as follows:
\begin{align}
\min_{\mathbf{x},\mathbf{y}} \quad f(\mathbf{x})=&\|\mathbf{W}_0\mathbf{x} - \mathbf{a}_0\|^2 +\|\mathbf{V}_0\mathbf{y} - \mathbf{c}_0\|^2 \nonumber\\
\text{subject to} \quad & \|\mathbf{W}_i \mathbf{x} - \mathbf{a}_i\|^2+\|\mathbf{V}_i \mathbf{y} - \mathbf{c}_i\|^2 \leq \pi_{i},\quad i =1,\cdots,p,\nonumber
\end{align}
where $\mathbf{x}\in \mathbb{R}^{n}$, $\mathbf{y}\in \mathbb{R}^{m}$ and $\mathbf{W}_{i}\in \mathbb{R}^{q\times n}$,$\mathbf{V}_{i}\in \mathbb{R}^{q\times m}$. $\mathbf{a}_{i}\in \mathbb{R}^{q}$ and $\mathbf{c}_{i}\in \mathbb{R}^{q}, i=0,1,\cdots,q$.

\subsubsection{PPA-Like Prediciton Scheme}
In the prediction step, we update the primal and dual variables by solving the following subproblems. 

\begin{subequations}
\begin{empheq}[left=\empheqlbrace]{alignat=2} 
 \bar{\mathbf{x}}^{k} &=\arg\min \big\{ \mathcal{L}(\mathbf{x},\mathbf{y}^{k},\boldsymbol{\lambda}^{k},\rho,\eta_{k}) + \frac{r_{k}}{2}\|\mathbf{x}-\mathbf{x}^k\|^2 \mid \mathbf{x}\in \mathcal{X} \big\},\nonumber\\[0.1cm]
  \bar{\mathbf{y}}^{k} &=\arg\min \big\{ \mathcal{L}(\bar{\mathbf{x}}^{k},\mathbf{y},\boldsymbol{\lambda}^{k},\rho,\eta_{k}) + \frac{r_{k}}{2}\|\mathbf{y}-\mathbf{y}^k\|^2 \mid \mathbf{y}\in \mathcal{Y} \big\},\nonumber\\[0.1cm]
     \boldsymbol{\bar{\lambda}}^{k}&= \arg\max \big\{ \mathcal{L}(\bar{\mathbf{x}}^{k}, \bar{\mathbf{y}}^{k} ,\boldsymbol{\lambda},\rho,\eta_{k}) - \frac{s_{k}}{2}\|\boldsymbol{\lambda}-\boldsymbol{\lambda}^k\|^2 \mid  \boldsymbol{\lambda}\in \mathcal{ Z} \big\},\nonumber
\end{empheq}
\end{subequations}
By solving the above subproblems, we obtain
\begin{empheq}[]{alignat=2}
&\bar{\mathbf{x}}^{k}=\left(2\rho\mathbf{W}_{0}^{\top}\mathbf{W}_{0}+\frac{2}{\eta_{k}}\sum_{i=1}^{p} \lambda^{k}_{i}\mathbf{W}^{\top}_{i}\mathbf{W}_{i}+r_{k}\mathbf{I}_{n}\right)^{-1}\cdot\left(2\rho\mathbf{W}_{0}^{\top}\mathbf{a}_{0}+\frac{2}{\eta_{k}}\sum_{i=1}^{p} \lambda^{k}_{i}\mathbf{W}^{\top}_{i}\mathbf{a}_{i}+r_{k}\mathbf{x}^{k}\right),\nonumber\\
&\bar{\mathbf{y}}^{k}=\left(2\rho\mathbf{V}_{0}^{\top}\mathbf{V}_{0}+\frac{2}{\eta_{k}}\sum_{i=1}^{p} \lambda^{k}_{i}\mathbf{V}^{\top}_{i}\mathbf{V}_{i}+r_{k}\mathbf{I}_{n}\right)^{-1}\cdot\left(2\rho\mathbf{V}_{0}^{\top}\mathbf{c}_{0}+\frac{2}{\eta_{k}}\sum_{i=1}^{p} \lambda^{k}_{i}\mathbf{V}^{\top}_{i}\mathbf{c}_{i}+r_{k}\mathbf{y}^{k}\right),\nonumber\\
&\bar{\lambda}^{k}_{i}=\max\{\hat{\lambda}^{k}_{i},0\}, \mbox{ where } \hat{\lambda}^{k}_{i}= \lambda^{k}_{i}+\frac{1}{\eta_{k} s_{k}}\left(\Vert\mathbf{W}_{i}\bar{\mathbf{x}}^{k}-\mathbf{a}_{i}\Vert^{2}+\Vert\mathbf{V}_{i}\bar{\mathbf{y}}^{k}-\mathbf{c}_{i}\Vert^{2}- \pi_{i}\right), \quad i=1,\cdots,p.\nonumber
\end{empheq}
\subsubsection{Matrix-Driven Correction Scheme}
In the correction step, the prediction error is corrected by using an upper triangular matrix. The update rule is:
\begin{equation}
\mathbf{w}^{k+1} =\mathbf{w}^{k}- \mathbf{M}_{k}(\mathbf{w}^{k}-\bar{\mathbf{w}}^{k}),\nonumber
\end{equation}
where the corrective matrix $\mathbf{M}_{k}$ is given as 
\begin{equation} \mathbf{M}_{k}=\left(
\begin{array}{ccc}
\mathbf{I}_{n} &\mathbf{0}& -\frac{1}{\eta_k r_{k}}\mathcal{D} \Phi (\bar{\mathbf{x}}^{k})^{\top} \\[0.2cm]\mathbf{0}&\mathbf{I}_{m}& -\frac{1}{\eta_k r_{k}}\mathcal{D} \Psi (\bar{\mathbf{y}}^{k})^{\top}\\[0.2cm]
\mathbf{0}&\mathbf{0}&\mathbf{I}_{p}
\end{array}\right),\nonumber
\end{equation}
where $\mathcal{D} \Phi (\mathbf{x})=[\nabla \phi_{1},\cdots,\nabla \phi_{p}]^{\top}\in \mathbb{R}^{p\times n}$ and $\mathcal{D} \Psi (\mathbf{y})=[\nabla \psi_{1},\cdots,\nabla \psi_{p}]^{\top}\in \mathbb{R}^{p\times m}$. For $\mu>1$ and $k\ge1$, we set $\mathsf{R}(\mathbf{u}^{k})=\|\mathcal{D} \Phi (\mathbf{x}^{k})\|^{2}+\|\mathcal{D} \Psi (\mathbf{y}^{k})\|^{2}$ and take a non-decreasing sequence like
\begin{equation}
r_{k}=\frac{1}{\eta_k}\sqrt{\mathsf{R}(\mathbf{u}^{k})},\quad
s_{k}=\frac{\mu\mathsf{R}(\bar{\mathbf{u}}^{k})}{\eta_k\sqrt{\mathsf{R}(\mathbf{u}^{k})}}.\nonumber
\end{equation}
To get a non-increasing sequence $\{r_{k},s_{k}\}$, the scaling factor $\eta_k$ should satisfy
\begin{equation}
\eta_k \ge \max \left\{\eta_{k-1} \cdot \sqrt{\frac{\mathsf{R}(\mathbf{u}^{k})}{\mathsf{R}(\mathbf{u}^{k-1})}},\quad \eta_{k-1} \cdot \frac{\mathsf{R}(\bar{\mathbf{u}}^{k}) \sqrt{\mathsf{R}(\mathbf{u}^{k-1})}}{\mathsf{R}(\bar{\mathbf{u}}^{k-1}) \sqrt{\mathsf{R}(\mathbf{u}^{k})}} \right\}.\nonumber
\end{equation}
 
 \subsection{Parameter Setting}

The dimensionality of the vectors $ \mathbf{x} $ and $ \mathbf{y} $ is set to $ n = 300 $ and $ m = 300 $, respectively, while the dimension $ q $ for the matrices $ \mathbf{W}_i $, $ \mathbf{V}_i $, and the vectors $ \mathbf{a}_i $, $ \mathbf{c}_i $ is set to 400. The number of constraints $ p $ is set to 20. To ensure reproducibility, a random seed of 0 is used. Matrices $ \mathbf{W}_0 \in \mathbb{R}^{q \times n} $ and $ \mathbf{V}_0 \in \mathbb{R}^{q \times m} $ are generated by drawing elements from a standard normal distribution and scaling them by 1. Vectors $ \mathbf{a}_0 \in \mathbb{R}^q $ and $ \mathbf{c}_0 \in \mathbb{R}^q $ are drawn from the same distribution and scaled by 12. For each constraint, $ i = 1, \dots, p $, the matrices $ \mathbf{W}_{i} \in \mathbb{R}^{q \times n} $ and $ \mathbf{V}_{i} \in \mathbb{R}^{q \times m} $ are generated by drawing elements from a standard normal distribution. The corresponding vectors $ \mathbf{a}_i \in \mathbb{R}^q $ and $ \mathbf{c}_i \in \mathbb{R}^q $ are scaled by 0.1. The bounds of two problems for the constraints $ \pi_i $ are uniformly set to 500,000 and 1,000,000, respectively. The tolerant error is set to $10^{-9}$. The values of $\alpha$ and $\beta$ are set to 2. In addition, the delta objective value is defined as the iterative error $\textsf{abs}(f(\mathbf{x}^{k})-f(\mathbf{x}^{k+1}))$.

 \begin{figure}[t]
\centering
\subfigure[Single-variable QCQP: objective function value ]{
\includegraphics[width=7.5cm]{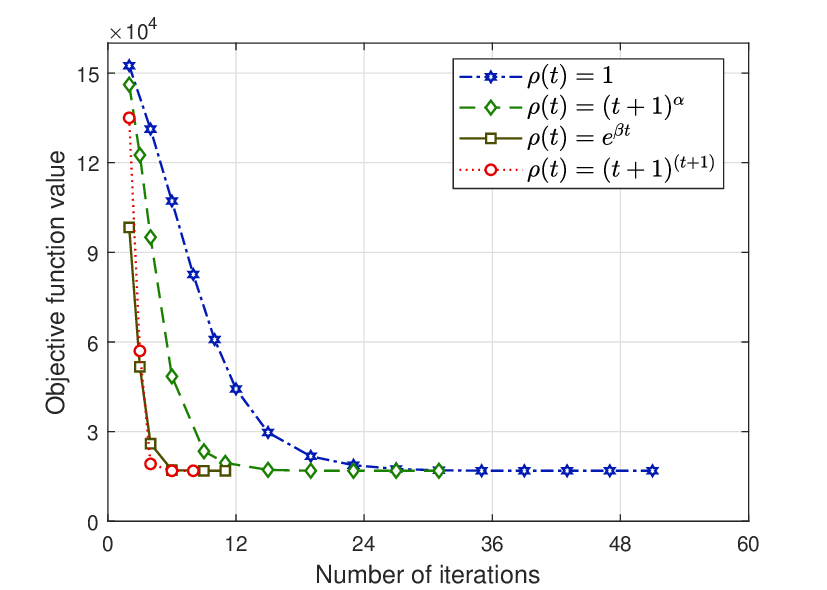}
}\hspace{5mm}
\subfigure[Single-variable QCQP: delta objective value]{
\includegraphics[width=7.5cm]{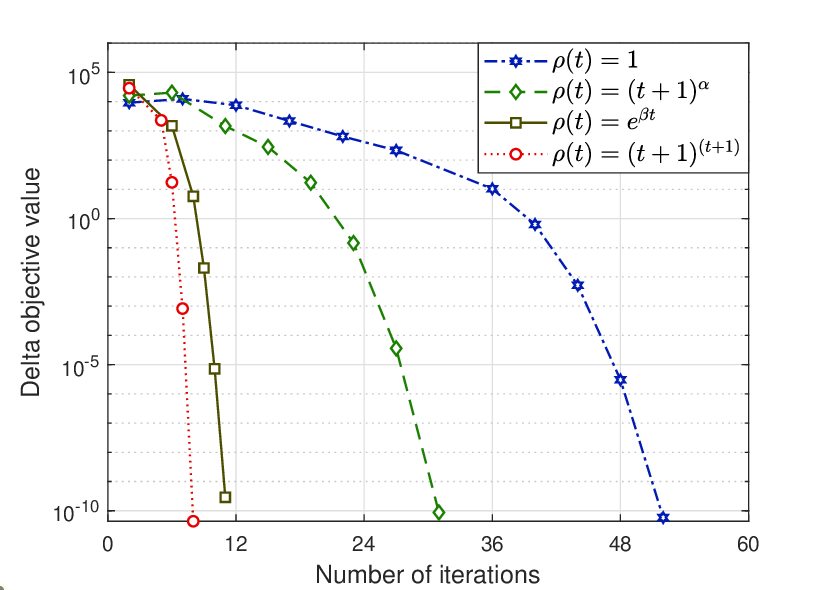}
}
\caption{Performance of the \textsf{Spice} method for single-variable QCQP problems with different $\rho(t)$ functions. (a) Objective function value versus the number of iterations; (b) delta objective value versus the number of iterations.}
\label{fig1}
\end{figure}

 \begin{figure}[t]
\centering
\subfigure[Separable-variable QCQP: objective function value ]{
\includegraphics[width=7.5cm]{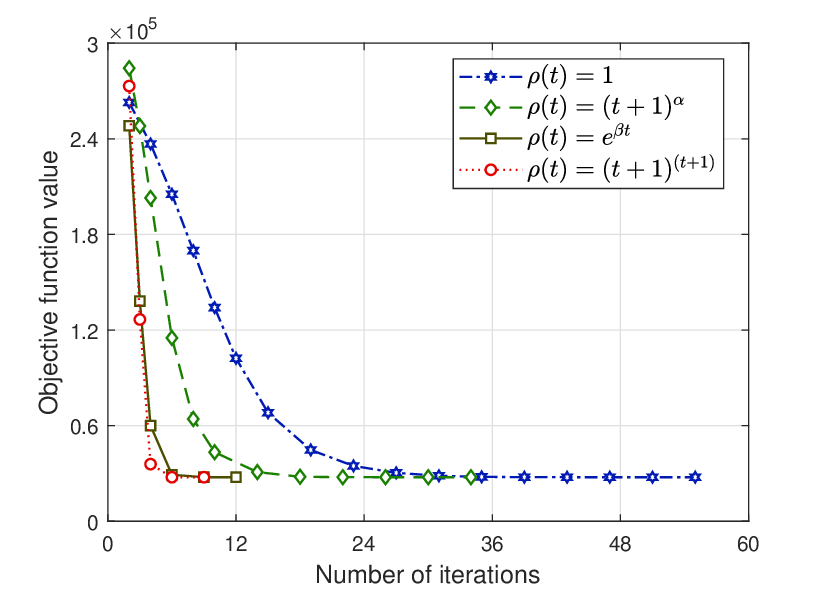}
}\hspace{5mm}
\subfigure[Separable-variable QCQP: delta objective value]{
\includegraphics[width=7.5cm]{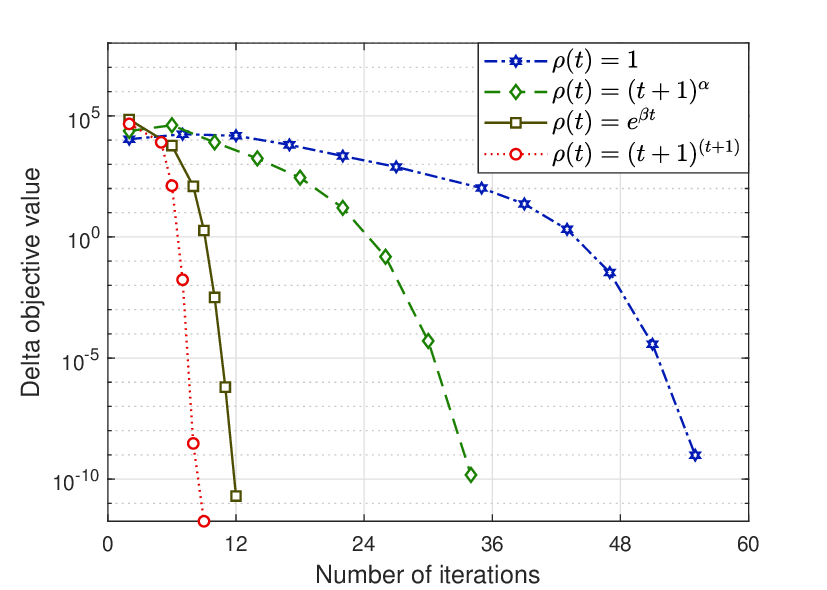}
}
\caption{Performance of the \textsf{Spice} method for separable-variable QCQP problems with different $\rho(t)$ functions. (a) Objective function value versus the number of iterations; (b) delta objective value versus the number of iterations.}
\label{fig2}
\end{figure}
 
 \subsection{Results Analysis}
 
 In Figure \ref{fig1}, the performance of the \textsf{Spice} method for single-variable QCQP problems is evaluated. In subplot (a),  we show the objective function value versus the number of iterations. The different curve legends represent four different settings of the $\rho(t)$ function: $\rho(t) = 1$, $\rho(t) = (t+1)^\alpha$, $\rho(t) = e^{\beta t}$, and $\rho(t) = (t+1)^{t+1}$. The curves show a steady decrease in the objective function values as the number of iterations increases, with $\rho(t) = (t+1)^{t+1}$ converging faster than the others, particularly within the first 10 iterations. The other settings for $\rho(t)$ demonstrate a slower but consistent convergence behavior. Subplot (b) indicates the delta objective value, plotted on a logarithmic scale, which provides more insight into the convergence rate. Again, $\rho(t) = (t+1)^{t+1}$ demonstrates a quicker decline in the delta objective value, indicating faster convergence. The results show that the choice of $\rho(t)$ significantly impacts the convergence rate, with more aggressive functions leading to faster reductions in both the objective and delta objective values.

In Figure \ref{fig2}, the \textsf{Spice} method's performance is extended to separable-variable QCQP problems. Subplot (a) shows the objective function values across iterations, where the same $\rho(t)$ functions are used. As expected, $\rho(t) = (t+1)^{t+1}$ exhibits the fastest convergence, closely followed by $\rho(t) = e^{\beta t}$, while the constant function $\rho(t) = 1$ leads to a much slower decrease in the objective value. Subplot (b) shows the delta objective value on a logarithmic scale, revealing a similar trend where $\rho(t) = (t+1)^{t+1}$ outperforms the others in reducing the objective error more rapidly. The overall conclusion from these figures is that the adaptive choices for $\rho(t)$ can significantly accelerate convergence, with functions that grow with iterations providing more efficient performance in both single-variable and separable-variable QCQP problems.

\setlength{\aboverulesep}{0pt}
\setlength{\belowrulesep}{0pt}
\begin{table*}[thb]\centering
\caption{Number of iterations: PC and \textsf{Spice} methods}
\scalebox{0.9}{
\begin{tabular}{@{}ccccccccccccc@{}}\toprule
\multicolumn{3}{c}{\textbf{Dimensions}} & \phantom{.}& \multicolumn{4}{c}{\textbf{Single-variable QCQP}} & \phantom{.} & \multicolumn{4}{c}{\textbf{ Separable-variable QCQP} } \\ 
\cmidrule{1-3} \cmidrule{5-8} \cmidrule{10-13}
$n$ & $m$ & p &   & PC & &\textsf{Spice}: $\rho(t)=1$ & \textsf{Spice}: $\rho(t)=e^{\beta t}$  &   & PC & &\textsf{Spice}: $\rho(t)=1$ & \textsf{Spice}: $\rho(t)=e^{\beta t}$  \\ \midrule
100 & 100 & 10 & & 546 & & 31 & 8 &  & 836 & & 33 & 8  \\
300 & 300 & 10 &    & 12462 & & 49 & 10 &   & 24068 &  & 53 & 10  \\ \midrule
100 & 100 & 20 &    & 745 &  & 32 & 8 &  & 1129 &  & 35& 9  \\
300 & 300 & 20 &   & 16890 &  & 51 & 10 &  & 33206 & & 54 & 11 \\
\bottomrule
\end{tabular}}
\label{table1}
\end{table*}

Table \ref{table1} compares the number of iterations required by the PC method and the \textsf{Spice} method for both single-variable and separable-variable QCQP problems across different problem dimensions. The dimensions $n$, $m$, and $p$ refer to the problem size, with larger values representing more complex instances. For single-variable QCQP problems, the PC method consistently requires a significantly larger number of iterations than the \textsf{Spice} method, regardless of the $\rho(t)$ function used. When $\rho(t) = 1$, the number of iterations needed by the \textsf{Spice} method is greatly reduced, and when $\rho(t) = e^{\beta t}$, the iteration count decreases even further. For instance, in the case where $n = 100$, $m = 100$, and $p = 10$, the PC method requires 546 iterations, while the \textsf{Spice} method with $\rho(t) = e^{\beta t}$ only takes 8 iterations, demonstrating the significant improvement in efficiency. Similarly, for separable-variable QCQP problems, the \textsf{Spice} method outperforms the PC method regarding the number of iterations. The trend persists as the problem dimensions increase, with the \textsf{Spice} method maintaining a more efficient convergence pattern, particularly with the $\rho(t) = e^{\beta t}$ function. For example, in the case of $n = 300$, $m = 300$, and $p = 20$, the PC method requires 33,206 iterations, whereas the \textsf{Spice} method with $\rho(t) = e^{\beta t}$ converges in just 11 iterations. This highlights the dramatic improvement in scalability offered by the \textsf{Spice} method with adaptive $\rho(t)$. This table illustrates the superior convergence performance of the \textsf{Spice} method, especially when utilizing the exponential $\rho(t)$ function, which achieves faster convergence and reduces the number of iterations by several orders of magnitude compared to the PC method.

\section{Conclusion}

This paper introduced a novel scaling technique to adjust the weights of the objective and constraint functions. Based on this technique, the \textsf{Spice} method was designed to achieve a free convergence rate, addressing limitations inherent in traditional PC approaches. Additionally, the \textsf{Spice} method was extended to handle separable-variable nonlinear convex problems. Theoretical analysis, supported by numerical experiments, demonstrated that varying the scaling factors for the objective and constraint functions results in flexible convergence rates. These findings underscore the practical efficacy of the \textsf{Spice} method, providing a robust framework for solving a wider range of nonlinear convex optimization problems.

\bibliographystyle{elsarticle-num}
\bibliography{newref}

\end{document}